\documentclass[twoside,amsmath,leqno]{siamltex}

\usepackage{amsmath,amsxtra,amssymb,color,
latexsym,epsfig,
subfigure,fancybox,epsfig}
\usepackage{enumerate}
\usepackage{enumitem}
\usepackage{epsfig}
\usepackage{cases}
\usepackage{hyperref}

\def\para#1{\vskip .2\baselineskip\noindent{\bf #1}}

\newtheorem{thm}{Theorem}[section]
\newtheorem {asp}{Assumption}[section]
\newtheorem{lm}{Lemma}[section]
\newtheorem{prop}{Proposition}[section]
\newtheorem{deff}{Definition}[section]

\newtheorem{cor}[thm]{Corollary}

\newtheorem{rem}{Remark}[section]

\numberwithin{equation}{section}


\DeclareMathOperator{\ess}{ess}

\newcommand{\eps}{\varepsilon}

\newcommand{\G}{\mathcal{G}}
\newcommand{\C}{\mathcal{C}}
\newcommand{\A}{\mathcal{A}}
\newcommand{\J}{\mathcal{J}}
\newcommand{\M}{\mathcal{M}}
\newcommand{\F}{\mathcal{F}}

\newcommand{\E}{\mathbb{E}}

\newcommand{\N}{\mathbb{N}}
\newcommand{\I}{\mathbb{I}}
\newcommand{\JJ}{\mathbb{J}}
\newcommand{\PP}{\mathbb{P}}

\newcommand{\R}{\mathbb{R}}

\newcommand{\abs}[1]{\left\vert#1\right\vert}

\numberwithin{equation}{section}
\newcommand{\wdt}{\widetilde}
\newcommand{\di}{\text{div}}
\newcommand{\tr}{\text{tr}}

\newcommand{\bed}{\begin{displaymath}}
\newcommand{\eed}{\end{displaymath}}
\newcommand{\bea}{\bed\begin{array}{rl}}
\newcommand{\eea}{\end{array}\eed}
\newcommand{\ad}{&\!\!\disp}
\newcommand{\aad}{&\disp}
\newcommand{\barray}{\begin{array}{ll}}
\newcommand{\earray}{\end{array}}
\def\disp{\displaystyle}

\newcommand{\1}{\boldsymbol{1}}
\newcommand{\beq}{\begin{equation}}
\newcommand{\eeq}{\end{equation}}

\def\bar{\overline}
\def\hat{\widehat}
\def\a.s{\text{\;a.s.\;}}

\begin{document}

\title{Second-Order Fast-Slow Stochastic Systems
}

\author{Nhu N. Nguyen\thanks{Department of Mathematics and Applied Mathematical Sciences, University of Rhode Island, Kingston, RI 02881, nhu.nguyen@uri.edu. The research of this author was supported in part by the AMS-Simons travel grant.}
\and George Yin\thanks{Department of Mathematics,
University of Connecticut, Storrs, CT 06269,
gyin@uconn.edu. The research of this author was supported in part by the National Science Foundation under grant DMS-2204240.}}

\maketitle

\begin{abstract}
This paper focuses on systems of nonlinear second-order stochastic differential equations with multi-scales. The motivation for our study stems from mathematical physics and statistical mechanics, for examples, Langevin dynamics and
stochastic acceleration in a random environment.
Our  effort  is to carry out  asymptotic analysis
to establish large deviations principles. Our focus is on obtaining the desired results for systems under weaker conditions. When the fast-varying process is a diffusion, neither Lipschitz continuity nor linear growth needs to be assumed.
	Our approach is based on combinations of
the intuition from Smoluchowski-Kramers approximation,
and the methods initiated in \cite{Puh16} relying on the concepts of relatively large deviations compactness and the identification of rate functions.
 When the fast-varying process is under a general setup with
no specified structure,  the paper establishes the large deviations principle of the underlying system under the assumption on the local large deviations principles of the corresponding first-order system.
\end{abstract}

\begin{keywords}
Second-order stochastic differential equation,
random environment,
large deviation, local large deviation,
averaging principle.
\end{keywords}

\begin{AMS}
34E15,
34F05,
60F05,	
60F10,
60J60.
\end{AMS}

\pagestyle{myheadings} \thispagestyle{plain}
\markboth{NGUYEN, YIN}{Second-Order  Stochastic Systems}

\section{Introduction}\label{sec:int}
In recent years, much effort has been devoted to analyzing stochastic systems  arising from a wide variety of fields. For example, averaging principle for complex Ginzburg--Landau equations was studied by  Gao \cite{Gao}, homogenization in ergodic media was treated in Chen et al. \cite{CCKW},  homogenization of stochastic convection-diffusion equation was studied in Bessaih et al.
\cite{BEM22}, mean field limits of particle-based stochastic systems were obtained in Isaacson et al. \cite{IMS},
Freidlin--Wentzell type large deviation results were obtained for multi-scale stochastic partial differential equations in Hong et al. \cite{HLL}.
One of the salient features in many applications is time scale separation. For example,
in Khasminskii and Yin \cite{KhY}, we treated diffusions with fast and slow motions.
 Although the first-order stochastic differential equations have been analyzed extensively,  properties associated with the  second-order stochastic differential equations are less well known.  In applications,  for example,
 in numerous systems in mathematical physics and statistical mechanics, such equations naturally arise; see for example, the work of Kesten and Papanicolaou in \cite{KP79,KP80}.

Because of the need, this paper is devoted to fully nonlinear second-order stochastic systems.
We begin with the study of a class of
second-order stochastic differential equations
\begin{equation}\label{eq:F-setup}
\begin{cases}
\eps^2\ddot{X}^\eps_t=F^\eps_t(X^\eps_t,Y^\eps_t)-\lambda^\eps_t(X^\eps_t,Y^\eps_t)\dot{X}^\eps_t,\quad X^\eps_0=x_0^\eps\in\R^d,\quad\dot{X}^\eps_0=x_1^\eps\in \R^d,\\
\dot Y^\eps_t=\dfrac 1\eps  b^\eps_t(X^\eps_t,Y^\eps_t)+\dfrac 1{\sqrt\eps}\sigma^\eps_t(X^\eps_t,Y^\eps_t)\dot W_t,\quad Y^\eps_0=y_0^\eps\in\R^l,
\end{cases}
\end{equation}
where $\eps>0$ is a small parameter.
Equation \eqref{eq:F-setup} is a multi-scale and fully nonlinear system.
In the above, for each $\eps>0$, $F^\eps_t(x,y):\R_+\times\R^d\times\R^l\to\R^d$,
$\lambda^\eps_t(x,y):\R_+\times\R^d\times\R^l\to\R$, $b^\eps_t(x,y):\R_+\times\R^d\times\R^l\to\R^l$, $\sigma^\eps_t(x,y):\R_+\times\R^d\times\R^l\to\R^{l\times m}$ are measurable functions of their arguments $(t,x,y)$, and $W_t$ is an $m$-dimensional
vector-valued standard
Brownian motion with $\dot W_t$ being its formal derivative.
For each $\eps>0$, the weak solution of \eqref{eq:F-setup}
is defined
in the usual way, i.e., there exists a suitable
probability space
and an adapted Brownian motion $W_t$ such that there are adapted processes $(X^\eps,Y^\eps)$ satisfying the  system of stochastic integral equations
corresponding to \eqref{eq:F-setup} almost surely.

Equations given in \eqref{eq:F-setup} may be considered as a singular perturbation problem with multiple-time scales.
Intuitively, as $\eps^2\to 0$ in the first equation of \eqref{eq:F-setup}, it can be approximated by a first-order equation, whereas
 $Y^\eps$ in the second equation of  \eqref{eq:F-setup} can be viewed as a fast-varying process, which will be so referred to in what follows.
 Further heuristic reasoning can be found in the beginning of Section \ref{sec:num}.

Our main effort is devoted to obtaining asymptotic properties of the underlying systems.
Under mild conditions,
we establish the large deviations principle (LDP for short) for the family of coupled processes $\{(X^\eps,\mu^\eps)\}_{\eps>0}$ with
  $\mu^\eps$ being the occupation measures
of the fast-varying process $Y^\eps$.
Neither Lipschitz continuity nor growth condition of $F^\eps,b^\eps,\sigma^\eps$ is assumed. From the LDP of such couples, we can obtain  averaging and
large deviations principles
for $\{X^\eps\}_{\eps>0}$.
In addition,
continuing our investigation, in this paper, we further reveal  asymptotic properties without assuming specific structure of
the fast process.
In lieu of
\eqref{eq:F-setup}, we consider
\begin{equation}\label{eq:setup}
\eps^2\ddot X^\eps_t=F^\eps_t(X^\eps_t,\xi^\eps_{t})-\lambda^\eps_t(X^\eps_t,\xi^\eps_{t})\dot X^\eps_t,
\end{equation}
with $\xi^\eps_{t}$ being a   process
without  a specified structure. We refer to  $\xi^\eps_{t}$ as a  fast-varying process for similar reason as that of $Y^\eps_t$ in the previous paragraph.

Why do we care of the second-order stochastic systems? This is because numerous problems in physics, statistical mechanics,  and engineering, etc., involve such systems.
In fact, in the study of ordinary differential equations, we encountered many second-order equations, including Airy's equations, Duffing equations, Li\'enard equations,
Rayleigh's equations,
etc. They have been used in a wide variety of applications.
Adding stochastic perturbations to these equations leads to second-order stochastic differential equations of various kind.

To further illustrate,
consider the motions of a net of particles in a net of random force fields,  described by the Newton's law as
$
\ddot x_\eps(t)=\wdt F_\eps(t,\omega,x_\eps(t),\dot x_\eps(t),\chi_\eps(t)),
$
where $x_\eps(t)$ denotes the location of the particles at time $t$. The
 $\wdt F_\eps$ denotes the random force fields depending on time $t$,
sample point $\omega$,
the particle's locations $x_\eps$, the particle's velocities $\dot x_\eps$, and   the random environments $\chi_\eps(t)$ interacting with the system.
To begin, turbulent diffusions and stochastic accelerations were
considered by Kesten and Papanicolaou in \cite{KP79,KP80}
under
suitable conditions.
Here we focus on the motions of
particles, in which the Reynolds number (see e.g., \cite{Pur77} for a definition) is very small so that inertial effects are negligible
compared to the damping force
by assuming that
$$
\wdt F_\eps(t,\omega,x_\eps(t),\dot x_\eps(t),\chi_\eps(t))=F_\eps(t,x_\eps(t),\chi_\eps(t))-
\frac{\lambda_\eps(t,x_\eps(t),\chi_\eps(t))}
{\eps}\dot x_\eps(t).
$$
Now, by scaling $X^\eps_t:=x_\eps(t/\eps)$,
and $\xi_{t}^\eps:=\chi_\eps(t/\eps)$,
the system can be rewritten as \eqref{eq:setup}.
One of the examples of
$\chi_\eps(t)$ is
a diffusion process. In this case,
 $\xi_{t}^\eps$ is a fast diffusion process
that is
fully coupled with the system, which leads to the system of equations in \eqref{eq:F-setup}.
Another motivation is from the averaging and large deviations principles for systems of stochastic differential equations.
System \eqref{eq:F-setup} can be viewed as the second-order version of the problem considered in \cite{Lip96} and references therein. It should be mentioned that
there have been much recent interests in studying stochastic second-order systems in random environment. For example, the work \cite{XZW21}
studied stochastic Hamiltonian systems living in random environments with the random environment represented by a random switching process.

Because $\eps$ is a small parameter, as $\eps$ is getting smaller and smaller, we expect the system to display certain limit behavior,  in which the
averaging principle plays an important role in studying heterogeneity
that  often occurs in physics as well as in biology, economics, queuing theory, game theory,
among others; see, e.g., \cite{FGS12}.
Typically, analyzing and simulating heterogeneous models are much more challenging than the corresponding homogeneous models, in which the heterogeneous property is replaced by its average value. The averaging principle for a system guarantees the validity of this replacement.
On the other hand, the
LDPs (see \cite{DS89,DZ98}), characterizing
quantitatively the rare events, play an important role in many areas
with a wide range of applications.
To mention just a few,
they include
equilibrium and non-equilibrium statistical mechanics,
multi-fractals,
thermodynamics
of chaotic systems,
among others \cite{Tou09}.
By establishing the LDPs
for system \eqref{eq:F-setup} and \eqref{eq:setup}, we provide an insight about the motions of (small) particles in random force fields, which is heterogeneous and the heterogeneity is allowed to interact with the system.  Not only will it illustrate
averaging of the heterogeneity works in this case, but also provide the picture of the dynamics
around the averaged system.

 From the
development
of homogenization
and large deviations
point of view,
 much effort
  has been devoted to studying
 averaging and large deviations principles
 of the first-order differential equations under random environment (given by diffusion process, switching process, wideband noise, and others) in the setting of fast-slow systems.
Such problems have been addressed in \cite{Gui03,Kif09,Kus20,Xu22,XW23,Ver99,Ver00} under certain settings, in which, the fast process is often not fully coupled with the slow system.
Very recently, the question for the fully-coupled system  was addressed
in \cite{Puh16}.
Some other related studies
can be found in \cite{BDG18, Yinhe,Lip96}.
Reference  \cite{Kus20} considered systems under wideband noise; \cite{PIX21} studied systems under rough path noise;
\cite{CG20,CX22,CC18} investigated
systems in infinite dimensional settings.
 In contrast to
 the systems considered in the aforementioned works with emphases on first-order equations,
  we consider systems of second-order differential equations of the forms
  \eqref{eq:F-setup} and \eqref{eq:setup}.
 From a statistical physics
 point of view, there were some works treating the stochastic accelerations and the Langevin equations
 such as
 \cite{CF05,Fre04,XY22} for the study of Smoluchowski-Kramers approximation,
the work \cite{Freidlin} for the LDPs,  \cite{Cheng} for the MDPs (moderate deviations principles) in the
absence of the random environment,
and \cite{NY-JMP,NY-JMP2} for the LDPs of Langevin systems with random environment under certain specific settings.
To the best of our knowledge,
this paper is one of the first works addresses the problem of
averaging and
large deviations principle
for   second-order equations in random environment
that
are fully coupled.
We  establish the
LDPs under mild and natural conditions.

To establish the desired LDPs
for system \eqref{eq:F-setup}, in light of the work of
\cite{Lip96,Puh16} on the first-order SDEs,
we first establish the LDP for the family of coupled processes $\{(X^\eps,\mu^\eps)\}_{\eps>0}$, where $\mu^\eps$ is defined as a  random occupation measure of $Y^\eps$.
Then, the LDPs for the families
$\{X^\eps\}_{\eps>0}$ and/or $\{\mu^\eps\}_{\eps>0}$ can be
handled
 by some standard projection techniques in the large deviations theory.
Without
 assuming any regularity of $F^\eps$, $b^\eps$, and $\sigma^\eps$,
we could not establish a ``good" connection between the solution of the second-order equations
and
the corresponding first-order equations.
Our approach is based on a combination of
the approach of Puhaskii in \cite{Puh16} for the first-order coupled system (namely, obtaining the relatively large deviations compactness and then carefully identifying the rate functions), and the intuition of Smoluchowski-Kramers approximation.
	To establish LDPs of SDEs, the weak convergence methods
initiated by Dupuis and Ellis \cite{DE11},
have been used by many authors (see e.g., \cite{BDG18,Freidlin,DE11,Kus20} and references therein), which is shown to be effective
to prove the LDPs for
many systems.
However,  using
weak convergence approach for our problem
may require stronger assumptions such as Lipschitz continuity of coefficients in equation of $X^\eps$ (as shown in, e.g., \cite[Assumption 2.1.]{BDG18} and \cite[Hypothesis 1]{Freidlin}).
Such conditions are
 needed in Budhiraja,
Dupuis, and
 Ganguly
\cite{BDG18} because of the need to
 prove the lower bound \cite[(2.13)]{BDG18}, in which some uniqueness properties of auxiliary optimal controls are required.
The paper \cite{BDG18} studied the first-order SDEs with a fast-varying jump process, the aforementioned difficulty
arises in
 \cite{BDG18} due to the presence of multiple time scales rather than the presence of the jump process.
Here, we are dealing with fully nonlinear second-order stochastic systems with multi-scales, but we do not use the weak convergence method  to avoid requiring the Lipschitz continuity and other growth conditions.
In
\cite{Freidlin},  Cerrai and Freidlin
considered the second-order SDEs without coupling with another fast-varying processes. To establish the desired convergence, Lipschitz continuities of coefficients in the system are necessary.
In \cite{FeKu}, Feng and Kurtz  introduced the HJB equations/viscosity solutions approach. In \cite[Section 11.6]{FeKu}, first-order SDEs is considered, and conditions for the validity of LDPs are derived. However, these conditions rely on the existence of functions possessing certain  properties,
 which
 are often difficult to verify
in terms of the coefficients.
Although Feng and Kurtz were able
to provide explicit conditions on the coefficients, a key requirement is that $\sigma^\eps_t(x, y)$
being independent of
 $x$ (see \cite[Lemma 11.60 on p.278]{FeKu}).
As to be seen later, we
do not need the
Lipschitz continuity
for \eqref{eq:F-setup} neither do we need
$\sigma^\eps_t(x, y)$ being independent of $x$ as in \cite{FeKu}. In this paper,
we manage to
 establish LDPs of $X^\eps$ in multiscale and fully coupled system \eqref{eq:F-setup} under mild conditions, which is
 another of our goal.

To establish the desired LDP for
the system under general fast random process \eqref{eq:setup},
we have to use a different approach.
We assume that the corresponding first-order equation satisfies the local LDP, which will be shown to be verifiable and  satisfied in many problems.
To prove the LDP, we show that the family of $\{X^\eps\}_{\eps>0}$ is exponentially tight and satisfies the local LDP.

The rest of the paper is arranged as follows. We divide the presentation of the rest of the paper into two parts. The first part, Section \ref{sec:for},
 is devoted to the second-order systems with a fast-varying diffusion \eqref{eq:F-setup}.
  Section \ref{sec:re} formulates the problem and states
 the results.
 The detailed proof of results is  provided in Section \ref{sec:prof}.
 The second part of the paper,  presented in
Section \ref{sec:main2}, substantially extends the results to that of
second-order equations with general fast-varying random processes \eqref{eq:setup}.
The formulation, conditions, results, and detailed proofs are presented.
	Finally, Section \ref{sec:example} presents two examples
to illustrate our formulation and results.

\section{Fast-Slow Second-Order Systems with Fast Diffusion}
\label{sec:for}

\subsection{Notation, Formulation, and Results}\label{sec:re}

Throughout the paper, $|\cdot|$ denotes an Euclid norm
while $\|\cdot\|$ indicates the operator sup-norm,
$\C(\mathcal X,\mathcal Y)$ is
the space of continuous functions from $\mathcal X$ to $\mathcal Y$ and if $\mathcal Y$ is an Euclid space, we write $\C(\mathcal X,\mathcal Y)$ as $\C(\mathcal X)$ for simplicity.
Let $\M(\R^l)$
be the set of finite
measures on $\R^l$ endowed with the weak topology, and
$\mathcal P(\R^l)$ be the set of probability densities $m(y)$ on $\R^l$ such that $m \in\mathbb W^{1,1}_{\rm loc}(\R^d)$ and
$\sqrt m\in\mathbb W^{1,2}(\R^l)$, where $\mathbb W^{1,2}(\R^l)$ (resp., $\mathbb W^{1,1}_{\rm loc}(\R^d)$) is the Sobolev space (resp., local Sobolev space) with
suitable
exponents, and
$\C_0^1(\R^l)$ be the space of continuously differentiable functions with compact supports in $\R^l$.
Let $\C_{\uparrow}(\R_+,\M(\R^l))$ represent the subset of $\C(\R_+,\M(\R^l))$ of functions $\mu =
(\mu_t,t \in\R_+)$ such that $\mu_t -\mu_s$ is an element of $\M(\R^l)$ for $t\geq s$ and $\mu_t(\R^l) = t$. It is endowed with the subspace topology and is a complete separable metric space,
being closed in $\C(\R_+,\M(\R^l))$.
We define the random
process $\mu^\eps = (\mu^\eps_t, t \in\R_+)$ of the fast process $Y^\eps$ by
\begin{equation}\label{eq-mu} \barray
\mu^\eps_t(\A) := \int_0^t \1_{\A}(Y^\eps_s)ds,\quad \forall\A\in\mathcal B(\R^l).\earray
\end{equation}
Then, $\mu^\eps$ is a random element of $\C_{\uparrow}(\R_+,\M(\R^l))$
and we can regard $(X^\eps,\mu^\eps)$
as a random element of $\C(\R_+,\R^d)\times\C_{\uparrow}(\R_+,\M(\R^l))$.
Note that the
elements of $\C_{\uparrow}(\R_+,\M(\R^l))$ can be regarded as a $\sigma$-finite measures on $\R_+\times\R^l$. As a result,
we use the notation $\mu(dt, dy)$ for $\mu\in\C_{\uparrow}(\R_+,\M(\R^l))$.
For a symmetric positive
definite  matrix $A$ and matrix $z$ of suitable dimensions,
we define $\|z\|_{A} := z^\top A z$. Following Puhaski's notation,
$\|z\|_{A}$ can be either matrices or numbers, depending on the dimension $z$.
We also  use $\nabla_{x}$, $\nabla_{xx}$, $\di_x$ to denote the gradient, the Hessian, and the divergence, respectively, with respect to indicated variables. It should be clear from the context.

We will establish the LDP and describe explicitly
the rate function for the family $\{(X^\eps,\mu^\eps)\}_{\eps>0}$ in
$\C(\R_+,\R^d)\times\C_{\uparrow}(\R_+,\M(\R^l))$. The LDP and the rate function of $\{X^\eps\}_{\eps>0}$
are obtained directly by standard projections in the large deviations theory.
To proceed, we recall briefly the basic definitions
of the LDP.
For further references, see
\cite{DS89,DZ98,LP92}.

\begin{deff}{\rm
We said	the family of $\{\PP^\eps\}_{\eps>0}$ in some metric space $\mathbb S$ enjoys the LDP with a
rate function $\I$ if the following conditions are satisfied:
	1) $\I:\mathbb S\to[0,\infty]$ is inf-compact, that is, the
	level sets $\{z\in\mathbb S: \I(z)\leq L\}$ are compact in $\mathbb S$ for any $L>0$;
	and 2) for any open subset $G$ of $\mathbb S$,
	$$
	\liminf_{\eps\to 0}\eps\log\PP^\eps(G)\geq -\I(G):= -\inf_{z\in G}\I(f);
	$$
 and 3) for any closed subset $F$ of $\mathbb S$,
	$$
	\limsup_{\eps\to 0}\eps\log\PP^\eps(F)\leq -\I(F):= -\inf_{z\in F}\I(f).
	$$
	We say that a family of random elements of $\mathbb S$ obeys the
	LDP if the family of their laws obeys the LDP.
}\end{deff}
Our main effort in this section is to consider  system \eqref{eq:F-setup} and to establish LDP
for the family of the processes $\{(X^\eps,\mu^\eps)\}_{\eps>0}$ with $\mu^\eps$ being the empirical process associated with $Y^\eps$ as in \eqref{eq-mu}, where $(X^\eps,Y^\eps)$ is a solution of the second-order differential equation with random environment given in \eqref{eq:F-setup}.
Such a solution is defined as follows.

One can rewrite \eqref{eq:F-setup} as
\begin{equation}\label{eq-xpy}
\begin{cases}
\dot X^\eps_t=p^\eps_t, \quad X^\eps_0=x_0^\eps\in\R^d,\\
\eps^2\dot{p}^\eps_t=F^\eps_t(X^\eps_t,Y^\eps_t)-\lambda^\eps_t(X^\eps_t,Y^\eps_t)p^\eps_t,\quad p^\eps_0=x_1^\eps\in \R^d,\\
\dot Y^\eps_t=\dfrac 1\eps  b^\eps_t(X^\eps_t,Y^\eps_t)+\dfrac 1{\sqrt\eps}\sigma^\eps_t(X^\eps_t,Y^\eps_t)\dot W_t,\quad Y^\eps_0=y_0^\eps\in\R^l.
\end{cases}
\end{equation}
Recall that for each $\eps>0$,
the coefficients
$F^\eps_t(x,y):\R_+\times\R^d\times\R^l\to\R^d$,
$\lambda^\eps_t(x,y):\R_+\times\R^d\times\R^l\to\R$, $b^\eps_t(x,y):\R_+\times\R^d\times\R^l\to\R^l$, $\sigma^\eps_t(x,y):\R_+\times\R^d\times\R^l\to\R^{l\times m}$ are functions of $(t,x,y)$; $x_0^\eps,x_1^\eps\in\R^d, y_0^\eps\in\R^l$ are initial values that can be random.
Throughout the paper, we
assume that these functions are measurable and locally
bounded in $(t,x,y)$ such that the system of equations \eqref{eq-xpy} admits a weak
solution $(X^\eps,p^\eps,Y^\eps)$ with trajectories in $\C(\R_+,\R^d \times \R^d\times\R^l)$ for every initial condition
$(x_0^\eps,x_1^\eps,y_0^\eps)$;
and then the system of equations \eqref{eq:F-setup} admits a weak solution $(X^\eps,Y^\eps)$ with trajectories in $\C(\R_+,\R^d\times\R^l)$. More precisely, we assume that there exist a complete probability space
$(\Omega^\eps,\F^\eps,\PP^\eps)$ with filtration $\F^\eps = (\F^\eps_t,t\in\R_+)$, a Brownian motion
$(W_t,t\in\R_+)$ with respect to $\F^\eps$, processes $X^\eps = (X^\eps_t,t \in\R_+)$, $p^\eps= (p^\eps_t,t\in\R_+)$, and $Y^\eps=(Y^\eps_t, t\in\R_+)$ that are $\F^\eps$-adapted and have continuous trajectories
satisfying
the following
equations
\begin{equation*}
\begin{cases}
\displaystyle X^\eps_t=x^\eps_0+\int_0^tp^\eps_sds,\\[1ex]
\displaystyle p^\eps_t=x^\eps_1+\frac 1{\eps^2}\int_0^t\Big(F^\eps_s(X^\eps_s,Y^\eps_s)-\lambda^\eps_s(X^\eps_s,Y^\eps_s)p^\eps_s\Big)ds,\\[1ex]
\displaystyle Y^\eps_t=y^\eps_0+\dfrac 1\eps  \int_0^tb^\eps_s(X^\eps_s,Y^\eps_s)ds+\dfrac 1{\sqrt\eps}\int_0^t\sigma^\eps_s(X^\eps_s,Y^\eps_s)d W_s,
\end{cases}
\end{equation*}
for all $t\in\R_+$, $\PP^\eps$-a.s.
 It is noted that it may not guarantee the uniqueness of the solution. [To ensure the uniqueness, one
may need to require further that the coefficients are Lipschitz continuous, which we do not assume here.]
 Next, we need some
 conditions, which are mild and natural, to establish the LDP for the family of coupled processes $\{X^\eps,\mu^\eps\}_{\eps>0}$.

\begin{asp}\label{asp-2}{\rm
	Assume that for all $L>0$ and $t>0$,
	\begin{equation}\label{eq-cond-F}
\limsup_{\eps\to0}\sup_{s\in[0,t]}\sup_{y\in\R^l}\sup_{x\in\R^d:|x|\leq L}\big[|F^\eps_s(x,y)|
+|\lambda^\eps_s(x,y)|+|b^\eps_s(x,y)|+\|\Sigma^\eps_s(x,y)\|\big]<\infty,
	\end{equation}
	where $\Sigma^\eps_t(x,y):=\sigma^\eps_t(x,y)[\sigma^\eps_t(x,y)]^\top$,
	\begin{equation}\label{eq-cond-F-growth}
	\limsup_{\eps\to0} \sup_{s\in[0,t]}\sup_{y\in\R^l}\sup_{x\in\R^d}\frac{x^\top F^\eps_s(x,y)}{(1+|x|^2)\lambda^\eps_s(x,y)}<\infty,
	\end{equation}
\begin{equation}\label{eq-cond-sta-1}
\lim_{M\to\infty}\limsup_{\eps\to0}\sup_{s\in[0,t]}\sup_{y\in\R^l, |y|\geq M}\sup_{x\in\R^n, |x|\leq L} \frac{[b^\eps_s(x,y)]^\top y}{|y|}<0,
\end{equation}
\begin{equation}\label{eq-cond-lambda1} \barray
\ad
\limsup_{\eps\to0}\sup_{s\in[0,\infty),y\in\R^l,x\in\R^d}
\big[|\nabla_s\lambda^\eps_s(x,y)|
+|\nabla_x\lambda^\eps_s(x,y)|\\
\aad \ +|\nabla_y\lambda^\eps_s(x,y)|
+\|\nabla_{yy}\lambda^\eps_s(x,y)\|\big]<\infty,\earray
\end{equation}
and
\begin{equation}\label{eq-cond-lambda2}
\liminf_{\eps\to0}\inf_{s\in[0,\infty], y\in\R^l, x\in\R^d}
\lambda^\eps_s(x,y)>\kappa_0>0.
\end{equation}
}\end{asp}

\begin{rem}{\rm
	The condition \eqref{eq-cond-F}\label{rem-11}
	is $($locally in $(t,x)$ and globally in $y)$ boundedness conditions of $F^\eps,b^\eps$ and $\Sigma^\eps$.
Note that
\eqref{eq-cond-F-growth} is a growth-rate condition, which is milder than linear growth 
of $\frac{F^\eps_s(x,y)}{\lambda^\eps_s(x,y)}$, e.g., $\frac{F^\eps_t(x,y)}{\lambda^\eps_s(x,y)}=\frac 1x$ satisfies this condition but is not linear growth.
	Moreover,
 it does not implies any growth-rate condition for $F^\eps_s(x,y)$.
	The condition \eqref{eq-cond-sta-1} is a stability condition, which in fact is needed for the ergodicity of the fast process.
		It is noted that we do not require
any Lipschitz continuity and growth-rate conditions for these coefficients.
	Lower boundedness and regularity conditions \eqref{eq-cond-lambda1} and \eqref{eq-cond-lambda2}
	of $\lambda^\eps_t(x,y)$ are natural and often used in
 the literature of mathematical physics; see, e.g., \cite{Cheng,Freidlin}.
}\end{rem}

	Assume that there are ``limit" measurable functions $F_t(x,y)$, $\lambda_t(x,y)$, $b_t(x,y)$,  and $\sigma_t(x,y)$ of
	the families of functions
	$F_t^\eps(x,y)$,
$\lambda_t^\eps(x,y)$, $b_t^\eps(x,y)$, $\sigma_t^\eps(x,y)$ as $\eps\to 0$, respectively, in the sense that for all $t>0$ and $L>0$,
	\begin{equation}\label{eq-asp-conv}
\barray
\ad	\lim_{\eps\to0} \sup_{s\in[0,t]}\sup_{y\in\R^l, |y|\leq L}\sup_{x\in\R^d, |x|\leq L}\Big[|F^\eps_s(x,y)
-F_s(x,y)|+|\lambda^\eps_s(x,y)-\lambda_s(x,y)|\\
\aad	\quad
+|b^\eps_s(x,y)-b_s(x,y)|+\|\sigma^\eps_s(x,y)-\sigma_s(x,y)\|\Big]=0.
\earray
	\end{equation}
	
\begin{asp}\label{asp-3}{\rm
	Assume that the ``limit" function $b_t(x, y)$ is
	Lipschitz continuous in $y$ locally uniformly in $(t,x)$; 
	$b_t(x, y)$ and
	$\Sigma_t(x,y):=\sigma_t(x,y)[\sigma_t(x,y)]^\top$ are continuous in $x$ locally uniformly in $t$ and uniformly in $y$; $\Sigma_t(x,y)$ is
	of class $\C^1$ in $y$, with the first partial derivatives being bounded and Lipschitz continuous in $y$ locally uniformly in $(t, x)$, and $\text{\rm div}_y \Sigma_t(x, y)$ is continuous in $(x, y)$.
	The matrix $\Sigma_t(x, y)$ is positive definite uniformly in $y$ and
	locally uniformly in $(t, x)$. In addition, the ``limit" function $F_t(x,y)$ is locally Lipschitz continuous in $x$ locally uniformly in $t$ and uniformly in $y$.
	The conditions \eqref{eq-cond-lambda1} and \eqref{eq-cond-lambda2} hold for $\lambda_t$.
Moreover, for all
	$t>0$,
	\begin{equation}\label{eq-cond-sta-2}
	\lim_{|y|\to\infty}\sup_{s\in[0,t]}\sup_{x\in\R^d} \frac{[b_s(x,y)]^\top y}{|y|^2}<0.
	\end{equation}
}\end{asp}

\para{Rate function.}
Denote by $\G$ the collection of $(\varphi,\mu)$ such that the function $\varphi = (\varphi_t,t \in\R_+)\in\C(\R_+,\R^d)$ is absolutely continuous (with respect to the Lebesgue measure on $\R_+$) and
the function $\mu = (\mu_t,t \in\R_+)\in\C_{\uparrow}(\R_+,\M(\R^l))$, when considered as a measure on $\R_+\times \R^l$, is absolutely continuous (with respect to Lebesgue measure on
$\R_+\times \R^l$), i.e., $\mu(ds, dy) = m_s(y) dy ds$, and for almost all $s$, $m_s(y)$ (as a function of $y$) belongs to $\mathcal P(\R^l)$.

For $(\varphi,\mu)\in \G$, $\mu(ds,dy)=m_s(y)dyds$, define
\bea \ad
\I_1(\varphi,\mu)=\int_0^\infty\bigg[ \sup_{\beta\in\R^d}\beta^\top\bigg(\dot\varphi_s -\int_{\R^l}\frac{F_s(\varphi_s,y)}{\lambda_s(\varphi_s,y)}m_s(y)dy\bigg)\\
\aad\hspace{1.5cm}+\sup_{h\in \C_0^1(\R^l)}\int_{\R^l}\bigg([\nabla h(y)]^\top \Big(\frac12\di_y\big(\Sigma_s(\varphi_s,y)m_s(y)\big)-b_s(\varphi_s,y)m_s(y)\Big)\\
\aad\hspace{1.5cm}-\frac 12\|\nabla h(y)\|^2_{\Sigma_s(\varphi_s,y)}m_s(y)\bigg)dy\bigg]ds,
\eea
and define $\I_1(\varphi,\mu)=\infty$ if $(\varphi,\mu)\notin\G$.

\begin{deff}{\rm
		The family of random variables with distributions $\{\PP^\eps\}_{\eps>0}$ is said to be exponentially tight in the space $\mathbb S$ if there exists an increasing sequence of compact sets $(K_L)_{L\geq 1}$ of $\mathbb S$ such that
$\lim_{L\to\infty}\limsup_{\eps\to0}\eps\log\PP^\eps(K_L)=-\infty.
		$}
\end{deff}

\begin{thm}\label{thm-main}
Assume that
Assumptions
{\rm\ref{asp-2}} and {\rm\ref{asp-3}} hold,
that
the family of initial values $\{x_0^\eps\}_{\eps>0}$ obeys the LDP in $\R^d$ with
a rate function $\I_0$, 
that $$\limsup_{\eps\to0}\eps |x_1^\eps|<\infty \ \hbox{ a.s.,}$$ and that the family of initial values $\{y_0^\eps\}_{\eps>0}$ is exponentially tight in $\R^l$.
Then the family $\{(X^\eps,\mu^\eps)\}_{\eps>0}$ obtained from \eqref{eq:F-setup} obeys the LDP in $\C(\R_+,\R^d)\times \C_{\uparrow}(\R_+,\M(\R^l))$ with rate function $\I$ defined as
		$\I(\varphi,\mu) =
        \I_0(\varphi_0) + \I_1(\varphi,\mu),\text{ if }(\varphi, \mu) \in\G,$
$\I(\varphi,\mu) =	\infty,\text{ otherwise}.$
\end{thm}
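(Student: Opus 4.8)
The plan is to follow the Puhalskii-style programme: establish exponential tightness of $\{(X^\eps,\mu^\eps)\}_{\eps>0}$ first, then use relative large-deviations compactness to pass to a subsequence along which the LDP holds with some rate function $\mathbb J$, and finally identify $\mathbb J$ with $\I$. The decoupling of the initial condition (the $\I_0(\varphi_0)$ summand) should follow at the end from independence/conditioning combined with the contraction principle, so the heart of the matter is the LDP for $\{(X^\eps,\mu^\eps)\}$ with fixed (or exponentially tight) initial data and the identification of $\I_1$.

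First I would prove exponential tightness. For the $\mu^\eps$ component this is where Assumption \ref{asp-2}, specifically the stability condition \eqref{eq-cond-sta-1} and the local boundedness \eqref{eq-cond-F}, enters: a Lyapunov argument with a function like $|y|$ (or $\log(1+|y|^2)$) applied to $Y^\eps$ via the generator of the fast process gives exponential control on excursions of $Y^\eps$ to large $|y|$, hence tightness of the occupation measures in $\C_\uparrow(\R_+,\M(\R^l))$ at the exponential scale. For the $X^\eps$ component the key is that $X^\eps_t = x_0^\eps + \int_0^t p^\eps_s\,ds$, and $p^\eps$ satisfies $\eps^2\dot p^\eps = F^\eps - \lambda^\eps p^\eps$ with $\lambda^\eps \geq \kappa_0 > 0$ by \eqref{eq-cond-lambda2}; this damping forces $p^\eps_t$ to relax on the $\eps^2$ time-scale toward $F^\eps/\lambda^\eps$, and the growth-rate bound \eqref{eq-cond-F-growth} together with $\limsup_\eps \eps|x_1^\eps| < \infty$ controls $|X^\eps_t|$ uniformly, yielding equicontinuity estimates of the form needed for exponential tightness of $X^\eps$ in $\C(\R_+,\R^d)$. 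This is essentially the Smoluchowski–Kramers intuition made quantitative at the LDP scale.

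Having exponential tightness, relative LDP compactness gives, along any subsequence, a further subsequence with the LDP and some rate function; the remaining and main task is to show every such limiting rate function equals $\I$. Here I would split into the lower bound and the upper bound. For the identification I expect to introduce the auxiliary first-order system: formally replacing $\eps^2\ddot X^\eps$ by $0$ gives $\dot{\widetilde X}^\eps_t = F^\eps_t(\widetilde X^\eps_t, Y^\eps_t)/\lambda^\eps_t(\widetilde X^\eps_t, Y^\eps_t)$ coupled with the same $Y^\eps$, which is exactly of the type treated by Puhalskii \cite{Puh16}; one shows (using $\lambda^\eps \geq \kappa_0$, the regularity \eqref{eq-cond-lambda1} of $\lambda^\eps$, and $\eps|x_1^\eps|\to$ bounded) that $X^\eps$ and $\widetilde X^\eps$ are exponentially equivalent, i.e. $\PP^\eps(\sup_{s\le t}|X^\eps_s - \widetilde X^\eps_s| > \delta)$ decays superexponentially, so $\{(X^\eps,\mu^\eps)\}$ and $\{(\widetilde X^\eps,\mu^\eps)\}$ have the same LDP. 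For the first-order couple the rate function is the one appearing in $\I_1$: the $\sup_{\beta}$ term records the constraint that $\dot\varphi_s$ equals the $m_s$-average of $F_s/\lambda_s$ (it is $0$ if the constraint holds, $+\infty$ otherwise), while the $\sup_{h\in\C_0^1}$ term is the classical Donsker–Varadhan/Puhalskii rate for the occupation measure of the fast diffusion with generator built from $b_s(\varphi_s,\cdot)$ and $\Sigma_s(\varphi_s,\cdot)$, frozen at the slow value $\varphi_s$. Assumption \ref{asp-3} — local Lipschitz continuity of $F,b$, the $\C^1$/nondegeneracy hypotheses on $\Sigma$, continuity in $x$ uniformly in $y$, and the limit stability \eqref{eq-cond-sta-2} — is exactly what is needed to make this freezing rigorous and to guarantee the variational formula is a genuine good rate function.

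The main obstacle I anticipate is precisely the exponential equivalence of $X^\eps$ and the first-order trajectory $\widetilde X^\eps$ in the absence of any regularity of $F^\eps$ in $x$: one cannot Gronwall directly because $F^\eps$ need not be Lipschitz, so the comparison must be done through the $p^\eps$ equation and the damping $\lambda^\eps$ alone. The idea is to write the difference $X^\eps_t - \widetilde X^\eps_t = \int_0^t (p^\eps_s - F^\eps_s/\lambda^\eps_s)\,ds + (\text{error from } \widetilde X^\eps \text{ vs. } X^\eps \text{ in the argument of } F^\eps/\lambda^\eps)$, bound the first integral by integrating the $p^\eps$-ODE and using $\eps^2\dot p^\eps_s = \lambda^\eps_s(F^\eps_s/\lambda^\eps_s - p^\eps_s)$, which gives $\int_0^t|p^\eps_s - F^\eps_s/\lambda^\eps_s|ds = O(\eps^2/\kappa_0)\cdot(\text{bounds on } \dot{(F^\eps/\lambda^\eps)} \text{ and on } p^\eps_0)$ after an integration by parts that uses \eqref{eq-cond-lambda1}; and then handle the $x$-dependence of $F^\eps/\lambda^\eps$ along the way by a continuity (not Lipschitz) argument restricted to the exponentially-overwhelming event where both trajectories stay in a fixed compact set, where the limiting $F/\lambda$ is uniformly continuous. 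Closing this estimate at the superexponential rate $o(\eps)$ in the log-probability, uniformly on compact time intervals and on compact spatial sets, is the delicate technical core; everything else is either standard large-deviations bookkeeping (projections, the contraction principle for extracting the LDP of $X^\eps$ alone) or a direct appeal to the first-order results of \cite{Puh16}.
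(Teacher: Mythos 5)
Your overall skeleton (exponential tightness $\Rightarrow$ relative LD compactness $\Rightarrow$ identification of the limit rate function) matches the paper, and your tightness discussion is in the right spirit. But the core of your identification step — proving that $X^\eps$ and the first-order trajectory $\widetilde X^\eps$ are exponentially equivalent and then importing the LDP of \cite{Puh16} wholesale — is precisely the route the paper says it \emph{cannot} take, and the obstacle you yourself flag is not repaired by your proposed fix. The theorem assumes no Lipschitz continuity (indeed no modulus of any kind) of $F^\eps$ in $x$; only the limit $F$ is locally Lipschitz, and the convergence \eqref{eq-asp-conv} of $F^\eps$ to $F$ is uniform only on compact sets in $y$, while the fast trajectory $Y^\eps$ is not pathwise confined to compacts (only its occupation measure is controlled at the LD scale). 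Consequently the comparison between $X^\eps$ and $\widetilde X^\eps$ cannot be closed by Gronwall, and ``uniform continuity of $F/\lambda$ on the overwhelming compact event'' is not enough: a mere modulus of continuity does not yield an Osgood-type closure, and without Lipschitz continuity the first-order system need not even have a unique solution, so ``the'' comparison trajectory $\widetilde X^\eps$ is ill-defined (the paper does not assume uniqueness even for \eqref{eq:F-setup}). There is also a structural mismatch in your reduction: if you keep the same $Y^\eps$ (whose drift is $b^\eps_t(X^\eps_t,Y^\eps_t)$, fed by the second-order slow process), the pair $(\widetilde X^\eps,\mu^\eps)$ is not the fully coupled first-order system of \cite{Puh16}, so his LDP does not directly apply; if instead you use the genuinely coupled first-order system, its fast process differs from $Y^\eps$ and you would additionally need exponential equivalence of the occupation measures, which the feedback makes at least as hard.

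What the paper does instead is to bypass exponential equivalence entirely: after tightness, it identifies every LD limit point $\hat\I$ with a variational functional $\I^*$ defined through test objects $\Phi^{\beta,f}_{t\wedge\tau}$, where $\beta$ is a deterministic step function of time only (this restriction is essential to control the terms involving $p^\eps$ and the martingale part of the fast noise), and the connection to the first-order dynamics enters only through the integration-by-parts identity of Lemma \ref{lm-intbypart} applied to the explicit formula \eqref{formulap} for $p^\eps$, which requires smoothness of $\lambda^\eps$ (Assumption \ref{asp-2}, \eqref{eq-cond-lambda1}) but only measurability and local boundedness of $F^\eps$. The lower bound for the limit points then comes from the exponential martingale identity \eqref{eq-conv} plus Lemma \ref{lm-vah}, and the upper bound from Puhalskii's regularization scheme applied to $\I^*$. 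If you want to salvage your plan, you would have to either add Lipschitz/growth hypotheses on $F^\eps$ (which is exactly the setting of Theorem \ref{thm-main2}, where the paper does connect the second- and first-order equations directly), or replace the exponential-equivalence step by this direct identification of the rate function.
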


\begin{cor}
	Under the hypotheses of Theorem \ref{thm-main}, the family $\{X^\eps\}_{\eps>0}$ satisfies the
	LDP in $\C(\R_+,\R^d)$ with the
rate function $\I_X$ defined
	by $$\I_X(\varphi)=\inf_{\mu\in\C_{\uparrow}(\R_+,\M(\R^l))}\I(\varphi,\mu).$$
	As an alternative representation,
	if function $\varphi= (\varphi_t,t\in\R_+) \in\C(\R_+,\R^d)$ is absolutely continuous
	with respect to Lebesgue measure on $\R_+$, then
\begin{equation}\label{eq-rate}
\begin{aligned}
\ad\! \I_X(\varphi)=\I_0(\varphi_0)+\int_0^\infty \sup_{\beta\in\R^d}\bigg[\beta^\top\dot\varphi_s -\sup_{m\in\mathcal P(\R^l)}\bigg(\beta^\top\int_{\R^l}\frac{F_s(\varphi_s,y)}{\lambda_s(\varphi_s,y)}m(y)dy\\
\aad \qquad\quad +\sup_{h\in \C_0^1(\R^l)}\int_{\R^l}\!\bigg([\nabla h(y)]^\top \Big(\frac12\di_y\big(\Sigma_s(\varphi_s,y)m(y)\big)-b_s(\varphi_s,y)m(y)\Big)\\
\aad \qquad\quad
-\frac 12\|\nabla h(y)\|^2_{\Sigma_s(\varphi_s,y)}m(y)\bigg)dy\bigg)\bigg]ds,
\end{aligned}
\end{equation}
	otherwise, $\I_X(\varphi) = \infty$.
\end{cor}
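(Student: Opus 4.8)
The plan is to deduce the first assertion from Theorem~\ref{thm-main} by the contraction principle, and then to rewrite the resulting infimum in the stated fibrewise form. First, the coordinate projection $\pi(\varphi,\mu)=\varphi$ from $\C(\R_+,\R^d)\times\C_{\uparrow}(\R_+,\M(\R^l))$ onto $\C(\R_+,\R^d)$ is continuous for the product topology, and $X^\eps=\pi(X^\eps,\mu^\eps)$. Since $\{(X^\eps,\mu^\eps)\}_{\eps>0}$ obeys the LDP with the inf-compact rate function $\I$ of Theorem~\ref{thm-main}, the contraction principle (\cite{DZ98,DS89}) gives at once the LDP for $\{X^\eps\}_{\eps>0}$ in $\C(\R_+,\R^d)$ with the inf-compact rate function $\I_X(\varphi)=\inf_{\mu}\I(\varphi,\mu)$. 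This is the first claim.

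For the explicit representation, note that $\I=\infty$ off $\G$ and that $(\varphi,\mu)\in\G$ forces $\varphi$ absolutely continuous, so $\I_X(\varphi)=\infty$ unless $\varphi$ is absolutely continuous; in that case only $\mu$ of the form $\mu(ds,dy)=m_s(y)\,dy\,ds$ with $m_s\in\mathcal P(\R^l)$ for a.e.\ $s$ contribute finite values, and $\I_X(\varphi)=\I_0(\varphi_0)+\inf_{(m_s)}\int_0^\infty\ell_s(m_s)\,ds$, where $\ell_s(m)$ denotes the integrand of $\I_1$ evaluated at the density $m$. The next step is the interchange $\inf_{(m_s)}\int_0^\infty\ell_s(m_s)\,ds=\int_0^\infty\inf_{m\in\mathcal P(\R^l)}\ell_s(m)\,ds$: the inequality ``$\ge$'' is immediate once $s\mapsto\inf_m\ell_s(m)$ is seen to be measurable (an infimum of a jointly measurable integrand, universally measurable by the projection theorem), and ``$\le$'' (when the right-hand side is finite) follows by choosing, via a measurable selection theorem of Kuratowski--Ryll-Nardzewski / von Neumann--Aumann type (applicable since $\ell_s(\cdot)$ is lower semicontinuous in $m$ and $(s,m)\mapsto\ell_s(m)$ is jointly measurable), a measurable $s\mapsto m_s$ with $\ell_s(m_s)\le\inf_m\ell_s(m)+\delta e^{-s}$, and letting $\delta\downarrow0$. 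This yields $\I_X(\varphi)=\I_0(\varphi_0)+\int_0^\infty\inf_{m\in\mathcal P(\R^l)}\ell_s(m)\,ds$.

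It then remains to identify, for fixed $s$, the inner infimum with the $\sup_{\beta}$ expression in the statement. Writing $v(m):=\int_{\R^l}\frac{F_s(\varphi_s,y)}{\lambda_s(\varphi_s,y)}m(y)\,dy$ and letting $B_s(m)$ be the $\sup_h$-term in $\I_1$, one has $\ell_s(m)=\sup_{\beta\in\R^d}\beta^\top(\dot\varphi_s-v(m))+B_s(m)$; since $\sup_{\beta}\beta^\top w$ equals $0$ if $w=0$ and $+\infty$ otherwise, $\inf_m\ell_s(m)=\inf\{B_s(m):m\in\mathcal P(\R^l),\ v(m)=\dot\varphi_s\}$, the value of a convex program ($B_s$ is convex and lower semicontinuous in $m$, being a supremum of functionals affine in $m$; $v$ is affine; $\mathcal P(\R^l)$ is convex). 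Its lower semicontinuity as a function of the right-hand side $\dot\varphi_s$, hence the coincidence of this value function with its biconjugate in that variable, comes from weak compactness of the sublevel sets $\{m\in\mathcal P(\R^l):B_s(m)\le L\}$ --- precisely the tightness furnished by the stability condition \eqref{eq-cond-sta-2}, already exploited in the proof of Theorem~\ref{thm-main} --- together with weak continuity of $v$ on these sets (valid because $F_s/\lambda_s$ is bounded by \eqref{eq-cond-F}, \eqref{eq-cond-lambda2}, and continuous). Equivalently, a minimax theorem (e.g.\ Sion's) lets one interchange $\inf_m$ and $\sup_\beta$, turning $\inf_m\ell_s(m)$ into exactly the integrand of the claimed formula for $\I_X$. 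Substituting into the previous step completes the argument.

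I expect the main obstacle to be this last, fibrewise, step: because the supremum runs over the unbounded set $\beta\in\R^d$, the minimax exchange (equivalently, the closedness of the value function $\dot\varphi_s\mapsto\inf\{B_s(m):v(m)=\dot\varphi_s\}$) is not formal, and one must isolate from the proof of Theorem~\ref{thm-main} the precise weak compactness of the $B_s$-sublevel sets and the continuity of $m\mapsto v(m)$ on them, now stated at the level of the ``frozen-$x$'' (Donsker--Varadhan--type) occupation-measure rate function rather than the path-space one. A lesser technical point is the infinite-horizon measurable selection in the second step, which the summable envelope $\delta e^{-s}$ handles.
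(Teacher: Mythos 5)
Your first paragraph is exactly the paper's own argument: the paper obtains the corollary from Theorem \ref{thm-main} ``directly by standard projections'', i.e., by the contraction principle applied to the continuous coordinate map $(\varphi,\mu)\mapsto\varphi$, and for the alternative representation it implicitly defers to the corresponding corollary of \cite{Puh16}, whose proof is precisely the two-step argument you sketch: interchange of the infimum over $\mu$ with the time integral (measurable selection), followed by a fibrewise convex-duality/contraction identification using inf-compactness of the Donsker--Varadhan functional and continuity of $m\mapsto\int_{\R^l}\frac{F_s(\varphi_s,y)}{\lambda_s(\varphi_s,y)}m(y)dy$ on its sublevel sets. So your route is the intended one; you are merely supplying details the paper omits, and you correctly locate where the real work sits.

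One concrete caveat, however, at the point where you assert that the minimax lands on ``exactly the integrand of the claimed formula''. Carried out correctly, the duality gives
\[
\inf_{m}\Big[\sup_{\beta\in\R^d}\beta^\top\Big(\dot\varphi_s-\int_{\R^l}\tfrac{F_s(\varphi_s,y)}{\lambda_s(\varphi_s,y)}m(y)dy\Big)+\JJ_{s,\varphi_s}(m)\Big]
=\sup_{\beta\in\R^d}\Big[\beta^\top\dot\varphi_s-\sup_{m\in\mathcal P(\R^l)}\Big(\beta^\top\int_{\R^l}\tfrac{F_s(\varphi_s,y)}{\lambda_s(\varphi_s,y)}m(y)dy-\JJ_{s,\varphi_s}(m)\Big)\Big],
\]
where $\JJ_{s,\varphi_s}(m)$ is the $\sup_{h}$ (Donsker--Varadhan) term: the Hamiltonian is $\sup_m[\beta^\top v(m)-\JJ_{s,\varphi_s}(m)]$, so that term must enter the inner supremum with a \emph{minus} sign. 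In the display of the corollary as printed the $\sup_h$ term appears with a plus sign; since $\JJ_{s,\varphi_s}$ is unbounded above on $\mathcal P(\R^l)$ (see its quadratic representation $\frac12\int\|\frac{\nabla_y m}{2m}-\J_{s,m(\cdot),\varphi_s}\|^2_{\Sigma_s}m\,dy$), the inner supremum would then be $+\infty$ and the integrand identically $-\infty$, so the formula cannot hold as written --- it is a sign slip in the statement, and your proof, if correct, proves the corrected version rather than the literal one. With that sign fixed, your outline is sound; the remaining points you already flag accurately (the infinite-horizon measurable selection, and the need to extract from the stability condition the weak compactness of the $\JJ_{s,\varphi_s}$-sublevel sets on which $m\mapsto\int F_s/\lambda_s\,m$ is continuous) are exactly the ingredients of the cited result in \cite{Puh16}.
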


\subsubsection{Zero points of $\I(\varphi,\mu)$, averaging principle of
\eqref{eq:F-setup},
and its large deviations analysis}\label{sec:num}
We start with an intuitive
discussion on the behavior of
\eqref{eq:F-setup}
as $\eps\to0$.
 Intuitively, there are two phases as $\eps\to 0$.
 First, $\eps^2$ goes to 0.
At this phase, $X^\eps_t$ is
close to the solution of the following associated first-order equation (or the over-damped equation in the language of statistical physics)
\begin{equation}\label{eq-11111}
\begin{cases}
0=F^\eps_t(\bar X^\eps_t,Y^{\eps,\bar X}_t)-\lambda^\eps_t(\bar X^\eps_t,Y^{\eps,\bar X}_t)\dot{\bar X}^\eps_t,\quad \bar X^\eps_0=x_0^\eps \in\R^d,\\[1ex]
\dot Y^{\eps,\bar X}_t=\dfrac 1\eps  b^\eps_t(\bar X^\eps_t,Y^{\eps,\bar X}_t)+\dfrac 1{\sqrt\eps}\sigma^\eps_t(\bar X^\eps_t,Y^{\eps,\bar X}_t)\dot W_t,\quad Y^{\eps,\bar X}_0=y_0^\eps\in\R^l.
\end{cases}
\end{equation}
Next, $Y^{\eps,\bar X}_t$  converges to its invariant distribution as $\eps\to0$.
More precisely, if we let $\wdt Y^{\bar X}_t:=Y^{\eps,\bar X}_{t\eps}$, then
$$
\begin{cases}
\dot{\bar X}^\eps_t=\dfrac{F^\eps_t(\bar X^\eps_t,\wdt Y^{\bar X}_{t/\eps})}{\lambda^\eps_t(\bar X^\eps_t,\wdt Y^{\bar X}_{t/\eps})},\quad\bar X^\eps_0=x^\eps_0\in\R^d,\\
\dot {\wdt Y}^{\bar X}_t=b^\eps_{t\eps}(\bar X^\eps_{t\eps},\wdt Y^{\bar X}_t)+ \sqrt\eps\sigma^\eps_{t\eps}(\bar X^\eps_{t\eps},\wdt Y^{\bar X}_t)\dot {\wdt W}_t,\quad \wdt Y^{\eps,\bar X}_0=y_0^\eps\in\R^l,
\end{cases}
$$
where ${\wdt W}_t$ is another standard Brownian motion.
As a consequence, because $\wdt Y^{\bar X}_{t/\eps}$ will come to and stay close to its invariant measure as $\eps\to 0$, $\bar X^\eps_t$ will tend to $\bar X_t$,
the solution of the following differential equation
$$
\dot {\bar X}_t=\bar {F/\lambda}_t(\bar X_t),\quad  \bar X_0=\bar x_0,
$$
where $\bar x_0$
is the limit of
$x_0^\eps$,
and
$$
\bar {F/\lambda}_{t_1}(x):=\int_{\R^l}\frac{F_{t_1}(x,y)}{\lambda_{t_1}(x,y)}\bar \nu^{t_1,x}(dy),
$$
and for each fixed $(t_1,x)$, $\bar\nu^{t_1,x}(dy)$ is the invariant measure of the following stochastic differential equation
$$
\dot{\wdt Y_t}=b_{t_1}(x,\wdt Y_t)+\sigma_{t_1}(x,\wdt Y_t)\dot{\wdt W}_t.
$$
The convergence of $X^\eps_t$ to $\bar X_t$ as $\eps\to 0$ forms 
an averaging principle of \eqref{eq:F-setup}.
However, not only are we interested in the convergence of $X^\eps$ to $\bar X_t$, but also the tail probability of this convergence, i.e., the rate of the convergence of the probability of the event $\{\|X^\eps-\bar X\|>\eta\}$ to $0$, for any $\eta>0$.
We show that the convergence is exponentially fast.
The answer to these questions can be obtained from the LDP for $\{X^\eps_t\}_{\eps>0}$ and
explicit representations of the rate function.

To proceed, we apply our results to make the above intuition rigorous.
It is shown later that
that $\I_1(\varphi, \mu) = 0$ provided that a.e.
$$
\dot\varphi_s = \int_{\R^l} \frac{F_s(\varphi_s,y)}{\lambda_s(\varphi_s,y)}m_s(y) dy,
$$
and
$m_s(y)$ satisfies the following equation
\begin{equation}\label{eq-m}
\int_{\R^l}\Big(\frac12 \tr(\Sigma_s(\varphi_s,y)[\nabla_{yy}h(y)]) + [\nabla_yh(y)]^\top b_s(\varphi_s,y)\Big) m_s(y) dy = 0, \text{ for all }h\in\C^{\infty}_0(\R^l),
\end{equation}
and
$\I_0(\varphi_0) = 0$. Alternatively, $m_s(\cdot)$ is the invariant density of the diffusion process
with the
drift $b_s(\varphi_s,\cdot)$ and the diffusion matrix $\Sigma_s(\varphi_s,\cdot)$.
Therefore, as $\eps\to0$, the trajectories of $\{X^\eps\}_{\eps>0}$ 
hover
around $\bar X$
with exponential tail probability, where $\bar X$ is defined as the solution of the following ODE
\begin{equation}\label{eq-barX}
\dot{\bar X}_t=\bar{F/\lambda}_t(\bar X_t),\quad\bar X_0=\bar x_0,
\end{equation}
with
$$
\bar{F/\lambda}_t(x):=\int_{\R^l}\frac{F_t(x,y)}{\lambda_t(x,y)}\bar m_t(y)dy,
$$
and $\bar m_t(\cdot)$ satisfies equation \eqref{eq-m} and $\bar x_0$
satisfying
$\I_0(\bar x_0)=0$.
Let
$$B_\eta^c(\bar X):=\Big\{\varphi\in\C(\R_+,\R^d):\|\varphi_t-\bar X_t\|_{\C(\R_+,\R^d)}:=\sum_{n=1}^\infty
\frac 1{2^n} \big(1 \wedge \sup_{t\leq n}|\varphi_t-\bar X_t| \big)\geq \eta\Big\},$$
the LDP established in this paper
implies that
$$
\PP^\eps(X^\eps\in B_\eta^c(\bar X))\sim e^{-\frac 1{\eps}\I_X(B_\eta^c(\bar X))},
$$
where
$\I_X(B_\eta^c(\bar X))=\inf_{\varphi\in B_\eta^c(\bar X)}\I_X(\varphi)$.
If we assume that $\bar X$ is the unique solution of \eqref{eq-barX}, it is the unique solution of $\I_X(\varphi)=0$.
As a result, $\I_X(B_\eta^c(\bar X))>0$. Indeed, if $\I_X(B_\eta^c(\bar X))=0$, there exists $\{\varphi_k\}_{k=1}^\infty\subset B_\eta^c(\bar X)$ such that $\lim_{k\to\infty}\I_X(\varphi_k)=0$. Because of that $\I_X$ is a rate function, there exists a convergent subsequence
(still denoted by $\varphi_k$) of $\{\varphi_k\}$ with limit denoted by $\bar\varphi\in B^c_\eta(\bar X)$. Since $\I_X$ is lower semi-continuous, $0\leq \I_X(\bar\varphi)=\I_X(\lim_{k\to\infty}\varphi_k)\leq \lim_{k\to\infty}\I_X(\varphi_k)=0$. It leads to $\I_X(\bar\varphi)=0$, which is a contradiction.
Because $\I_X(B_\eta^c(\bar X))>0$,
$\PP(\|X^\eps-\bar X\|>\eta)\to 0$
exponentially fast for any $\eta>0$.

	\begin{rem}
	{\rm
		In Section \ref{sec:num}, we illustrate that from our LDP result, we can establish the averaging principle of \eqref{eq:F-setup} with exponentially convergence rate in the sense that $X^\eps$ converges to $\bar X$ (of \eqref{eq-barX}) with exponential tail probability, i.e., for any $\eta>0$, $\PP(\|X^\eps-\bar X\|>\eta)\to 0$
		exponentially fast.
From a different angle, references \cite{Xu22,XW23}  treated convergence rate for averaging principles of different problems  using certain moments. In this process, just as treating $L_2$ or $L_p$  convergence rates in numerical approximation of stochastic differential equations, global Lipschitz conditions are needed.
For our second-order equations, the Lipschitz continuity need not be assumed. This is an advantage.
}
\end{rem}

\subsubsection{Alternative representations of
$\I(\varphi,\mu)$}
One can write the rate function $\I(\varphi,\mu)$ as
\bea
\I(\varphi,\mu)=\I_0(\varphi_0)+\int_0^\infty\Big[ \sup_{\beta\in\R^d}\beta^\top\Big(\dot\varphi_s-\int_{\R^l}\frac{F_s(\varphi_s,y)}{\lambda_s(\varphi_s,y)}\nu_s(dy)\Big)+\JJ_{s,\varphi_s}(\nu_s)\Big]ds,
\eea
where $\nu_s(dy)=m_s(y)dy$ and
\bea \ad
\JJ_{s,\varphi_s}(\nu_s):=\sup_{h\in \C_0^1(\R^l)}\int_{\R^l}\bigg([\nabla h(y)]^\top \Big(\frac12\di_x\big(\Sigma_s(\varphi_s,y)m_s(y)\big)-b_s(\varphi_s,y)m_s(y)\Big)\\
\aad \hspace{1.1in}-\frac 12\|\nabla h(y)\|^2_{\Sigma_s(\varphi_s,y)}m_s(y)\bigg)dy.
\eea
In fact, for each $(s,x)\in\R_+\times\R^d$,
$\JJ_{s,x}(\nu)$ is the large deviations rate function for the empirical measures
$$\nu^{s,x}_t(dy) = \frac 1t\int_0^t \1_{dx}(\wdt Y_r^{s,x})dr$$ for rate $\eps=1/t$ as $t\to\infty$ and
$$
\dot{\wdt Y}^{s,x}_t=b_s(x,\wdt Y^{s,x}_t)+\sigma_s(x,\wdt Y^{s,x}_t)d\wdt W_t;
$$
see \cite[Section 2, Corollary 2.2 and 2.3]{Puh16}.

Moreover, if $\I(\varphi,\mu)$ is finite,
it is necessary that
$$
\dot\varphi_s = \int_{\R^l} \frac{F_s(\varphi_s,y)m_s(y)}{\lambda_s(\varphi_s,y)}dy \ \hbox{ a.e.,}
$$
and in this case, we have
$
\I(\varphi,\mu)=\I_0(\varphi_0)+\int_0^\infty \JJ_{s,\varphi_s}(\nu_s)ds.
$
On the other hand, one can also write the rate function (see \cite[Section 2, Proposition 2.1]{Puh16})
\begin{equation}\label{rate-alternative}
\I(\varphi,\mu) = \I_0(\varphi_0)+
\frac12\int_0^\infty\int_{\R^l}\Big\|\frac{\nabla_y m_s(y)}{2m_s(y)}-\J_{s,m_s(\cdot),\varphi_s}(y)\Big\|_{\Sigma_s(\varphi_s,y)}m_s(y) dy ds,
\end{equation}
with $\mu(dy,ds)=m_s(y)dyds$, where for each $s\in\R_+$, each function $m_s(\cdot)$ belongs to $\mathcal P(\R^l)$, and
 $\J_{t,m(\cdot),u}$ is a function defined as follows.
Denote $\mathbb L^2(\R^l,\R^l, \Sigma_s(\varphi_s,y),m_s(y)$ $dy)$
the Hilbert space of all $\R^d$-valued functions (of $y$) in $\R^l$
with
 norm $$\|f\|_{\Sigma,m}^2=\int_{\R^l}\|f(y)\|^2_{\Sigma_s(\varphi_s,y)}m_s(y)dy$$ and
$\mathbb L^2_{\rm loc}(\R^l,\R^l, \Sigma_t(x,y),m(y) dy)$  the space
consisting
of functions whose products with arbitrary $\C_0^\infty$-functions belong to $\mathbb L^2(\R^l,\R^l, \Sigma_t(x,y),m(y) dy)$,
then $\J_{t,m(\cdot),u}$ is defined as a function
of $y$ by
$$
\J_{t,m(\cdot),u}(y)=\Pi_{ \Sigma_t(x,\cdot),m(\cdot)}(\Sigma_t(x,y)^{-1}(b_t(x,y) - \di_x \Sigma_t(x,y)/2)),
$$
where $\Pi_{ \Sigma_t(x,\cdot),m(\cdot)}$ maps $\phi(y)\in \mathbb L^2_{\rm loc}(\R^l,\R^l, \Sigma_t(x,y),m(y) dy)$
to
$\Pi_{ \Sigma_t(x,\cdot),m(\cdot)}\phi(y)$, which belongs to $\mathbb L_0^{1,2}(\R^l,\R^l, \Sigma_t(x,y),m(y) dy)$ and satisfies that for all
$h\in\C^\infty_0 (\R^l)$,
$$
\int_{\R^l} [\nabla h(y)]^\top \Sigma_t(x,y)\Pi_{ \Sigma_t(x,\cdot),m(\cdot)}\phi(y)m(y)dy = \int_{\R^l} [\nabla h(y)]^\top \Sigma_t(x,y)\phi(y)m(y) dy.
$$
If $\phi(y)\in \mathbb L^2(\R^l,\R^l,\Sigma_t(x,y),m(y) dy)$, then $\Pi_{ \Sigma_t(x,\cdot),m(\cdot)}\phi(y)$ is nothing than the orthogonal projection of $\phi$ onto $\mathbb L_0^{1,2}(\R^l,\R^l, \Sigma_t(x,y),m(y) dy)$.

	In fact, $\I(\varphi,\mu)$ is defined similarly to the rate function of the family of processes $\{(\bar X^\eps,\mu^{\eps,\bar X})\}_{\eps>0}$,
	 where
	$
	\mu^{\eps,\bar X}_t(\A)=\int_0^t\1_{\A}(Y^{\eps,\bar X}_s)ds,\quad \A\in\mathcal B(\R^l),
	$
	and $(\bar X^\eps_t,Y^{\eps,\bar X}_t)$ is the solution of
	the following equation
	$$
	\begin{cases}
	\dot{\bar X}^\eps_t=\dfrac{F^\eps_t(\bar X^\eps_t,Y^{\eps,\bar X}_t)}{\lambda^\eps_t(\bar X^\eps_t,Y^{\eps,\bar X}_t)},\quad \bar X^\eps_0=x_0^\eps \in\R^d,\\[3ex]
	\dot Y^{\eps,\bar X}_t=\dfrac 1\eps  b^\eps_t(\bar X^\eps_t,Y^{\eps,\bar X}_t)+\dfrac 1{\sqrt\eps}\sigma^\eps_t(\bar X^\eps_t,Y^{\eps,\bar X}_t)\dot W_t,\quad Y^\eps_0=y_0^\eps\in\R^l.
	\end{cases}
	$$

\subsection{Proof of Theorem \ref{thm-main}}\label{sec:prof}
This section is devoted to proving Theorem \ref{thm-main}.
 In the proof, we use
  $C$ to represent a
 generic positive
 constant that is
 independent of $\eps$. The value $C$ may change at different appearances;
  we will specify which parameters it depends on if it is necessary.

\subsubsection{A Road Map For the Development of Our Analysis and Proof}
To make the proof be more readable, we first provide a road map and then the details will be illustrated in following sections.
The proof of the LDP of $\{(X^\eps,\mu^\eps)\}_{\eps>0}$ is based on the approach of \cite{Puh16}, which relies on the properties that if a family of random elements is exponentially tight then it is sequentially large deviation (LD)
relatively compact,
i.e., any subsequence contains a further subsequence
enjoying the LDP with some rate function.
The remaining work is done by carefully identifying the rate functions.
Specifically, the details are as follows.

{\bf Step 1:} The exponential tightness of $\{(X^\eps,\mu^\eps)\}_{\eps>0}$ is proved in Section \ref{sec:exp} by applying the (extended) Puhalskii's criteria.
Particularly, dealing with $X^\eps$, we  prove \eqref{p-cre-11}, which shows that $\{X^\eps\}$ cannot be large with exponentially small probability and \eqref{p-cre-12},
leading to
needed continuity properties.
To prove these, a first step is to use Lemma \ref{lm-intbypart} to
deal
with the large factor $\frac 1{\eps^2}$.
Then, taking advantages of the martingale property of stochastic integrals
enables
us to establish desired estimates.
It is similar for $\mu^\eps$.

{\bf Step 2:} After proving exponentially tightness of $\{(X^\eps,\mu^\eps)\}$, thanks to Proposition \ref{prop21},
$\{(X^\eps,\mu^\eps)\}$ is sequentially LP relatively compact (Definition \ref{def:LD}).
Therefore, the second step is devoted to
identifying the large deviations (LD) limit points.
More precisely, let $\hat\I$ be a large deviations limit rate functions or
LD limit points of
$\{(X^\eps,\mu^\eps)\}_{\eps>0}$ (i.e., a rate function of some subsequence of $\{(X^\eps,\mu^\eps)\}_{\eps>0}$ that obeys the LDP) and we
prove that $\hat \I=\I$, ($\I$ is the rate function defined in Section \ref{sec:re}).
Details for this step is as follows.

{\bf Step 2a:} We introduce another characterization of the rate function in Section \ref{sec:iden}. Precisely,
for each step function $\beta(s)$, each $f(t,x,y)$
real-valued $\C^{1,2,2}(\R_+\times \R^d \times \R^l)$-function with compact support in $y$ locally uniformly in $(t, x)$,
define $\Phi_t^{\beta,f}$ as in \eqref{eq-phi-1}.
Then, $\I^*$ is defined as the supremum of $\Phi_{t\wedge\tau}^{\beta,f}$ over $\beta$, $f$, and stopping times $\tau$ (see \eqref{I*}).
Later, as a byproduct of the study of the regularity of $\I^*$ (in Section \ref{sec:iden-com}), it is shown that $\I^*=\I$, thanks to their
alternative representations \eqref{rate-alternative} and \eqref{I*-alternative}.

{\bf Step 2b:} In Section \ref{sec:IsIhat}, we
prove the lower bound of LD limit, i.e., $\I^*\leq \hat \I$ for any $(\varphi,\mu)$ or
$\sup_{(\varphi,\mu)\in\C(\R_+,\R^d) \times \C_{\uparrow}(\R_+,
	\M(\R^l))}\Big(\Phi^{\beta,f}_{t\wedge\tau(\varphi,\mu)}(\varphi,\mu)-\hat \I(\varphi,\mu)\Big)=0.
$
To prove this claim, using Lemma \ref{lm-vah} (or \cite[Theorem 2.1.10]{DS89}), it suffices to show that
$
\E^\eps\exp\Big\{\frac 1\eps\Phi^{\eps,\beta,f}_{t\wedge \tau(X^\eps,\mu^\eps)}(X^\eps,\mu^\eps)\Big\}=1,$
and
$
\Phi^{\eps,\beta,f}_{t\wedge \tau(\varphi,\mu)}(\varphi,\mu)\to\Phi^{\beta,f}_{t\wedge \tau(\varphi,\mu)}(\varphi,\mu)
$
as $\eps\to0$ uniformly in compact sets; see Theorem \ref{thm-hatI}.

{\bf Step 2c:} Section \ref{sec:iden-com} is devoted to the proof of upper bound of LD limits, i.e., $\hat \I\leq \I^*$.
We first show this claim at sufficiently regular (dense) points; see Theorem \ref{thm-hat-m}. Then, it is shown for arbitrary points by an approximation
using regular points and applying some continuity properties of rate functions.

\begin{rem}
	{\rm Although our proof is inspired by
the approach of \cite{Puh16},  let us  highlight briefly some differences in our works as follows.
	(i) System \eqref{eq:F-setup} is second-order while \cite{Puh16} studied first-order equation. We do not assume any Lipschitz continuities for coefficients. Therefore, it is not possible to get a ``good estimate" (e.g., exponentially close) between \eqref{eq:F-setup} and its corresponding overdampped first-order equation.
	(ii) To identify the rate function in step 2,
	when defining $\I^*$, we need to narrow the space taking the supremum to control terms containing the derivative $p^\eps$ of $X^\eps$ (which is
due to considering second-order system), specially the integral involving $p^\eps$ and the diffusion part of the fast process (see \eqref{eq-betadx-1}); see Remark \ref{rem-25}.}
\end{rem}

\subsubsection{Exponential Tightness of $\{(X^\eps,\mu^\eps)\}_{\eps>0}$}\label{sec:exp}
In this section, we establish
the exponential tightness of $\{(X^\eps,\mu^\eps)\}_{\eps>0}$ in $\C(\R_+,\R^l)\times \C_{\uparrow}(\R_+,\M(\R^l))$.
To be self-contained, we recall these preliminaries below;  see
\cite{DS89,DZ98,LP92} for more detail.

\begin{deff}\label{def:LD}{\rm
		The family $\{\PP^\eps\}_{\eps>0}$ is said to be sequentially LD 
		relatively
		compact if any subsequence $\{\PP^{\eps_k}\}_{k\geq 1}$ of $\{\PP^\eps\}_{\eps>0}$ contains a further
		subsequence $\{\PP^{\eps_{k_j}}\}_{j\geq 1}$ which satisfies the LDP with some
		large deviations
		rate
		function as $j\to\infty$.
		We say that a family of random elements of $\mathbb S$ is sequentially LD relatively compact (resp. exponentially tight)  if
		their laws have the indicated property.
}\end{deff}

\begin{prop}\label{prop21}
	{\rm(\cite[Theorem 4.1]{Puh16})}
	If a family $\{\PP^\eps\}_{\eps>0}$ is exponentially tight then it is sequentially LD relatively compact.
\end{prop}

To start, we introduce the following
technical lemma, which will be used often in some calculations
in
this section.

\begin{lm}\label{lm-intbypart}
	For real-valued continuous function $g(s)$, and real-valued continuously differentiable function $u(s)$, and real-valued It\^o process $w(s)$, $w(s)>0\; \forall s$ with the quadratic variation denoted by $\langle dw,dw\rangle_s$, we have the following identity
	\begin{equation}\label{eq-intbypart}
		\barray \ad
		\frac 1{\eps^2}\int_0^t u(s)\int_0^s e^{-\frac 1{\eps^2}\int_r^sw(r')dr'}g(r)drds
		\\
		
		\ad \ =\int_0^t\frac{u(s)g(s)}{w(s)}ds+\int_0^t\frac{\nabla_su(s)}{w(s)}\left(\int_0^s e^{-\frac 1{\eps^2}\int_r^sw(r')dr'}g(r)dr\right)ds\\
		&-\frac{u(t)}{w(t)}\int_0^t e^{-\frac 1{\eps^2}\int_s^tw(r)dr}g(s)ds-\int_0^t \frac{u(s)}{w(s)^2}\left(\int_0^se^{-\frac 1{\eps^2}\int_r^sw(r')dr'}g(r)dr\right)dw(s)\\
		\aad \ +\int_0^t \frac{u(s)}{w(s)^3}\left(\int_0^se^{-\frac 1{\eps^2}\int_r^sw(r')dr'}g(r)dr\right)\langle dw,dw\rangle_s.
	\earray	\end{equation}
Moreover, the identity \eqref{eq-intbypart} still holds if $g$ is $\R^d$-valued function and $u$ can be either $\R$-valued or $\R^d$-valued $($with the operations corresponding to $u$ and $g$ being understood as the inner product in $\R^d)$.
\end{lm}

\begin{proof}
Using integration by parts for
$$u(s)\int_0^se^{\frac 1{\eps^2}\int_0^rw(r')dr'}g(r)dr\text{ and } \frac{e^{-\frac1{\eps^2}\int_0^sw(r)dr}}{w(s)},$$
\eqref{eq-intbypart} follows from standard calculations.
\end{proof}

\begin{thm}
	Suppose that Assumption {\rm\ref{asp-2}} holds,
	that the family $\{(x_0^\eps,y_0^\eps)\}_{\eps>0}$ is exponentially tight, and that $\limsup_{\eps\to0}\eps x_1^\eps<\infty$ \a.s Then 
$\{(X^\eps,\mu^\eps)\}_{\eps>0}$ obtained from \eqref{eq:F-setup} is exponentially tight and
sequentially LD relatively compact in
	$\C(\R_+,\R^l)\times \C_{\uparrow}(\R_+,\M(\R^l))$.
\end{thm}

Since $\C(\R_+,\R^l)\times \C_{\uparrow}(\R_+,\M(\R^l))$ is a closed
subset of $\C(\R_+,\R^d)\times \C(\R_+,\M(\R^l))$ and $\PP^\eps\big((X^\eps,\mu^\eps)$ $\in\C(\R_+,\R^d) \times \C_{\uparrow}(\R_+,\M(\R^l))\big) = 1$, it is sufficient to prove that the family $\{(X^\eps,\mu^\eps)\}_{\eps> 0}$ is exponentially tight in $\C(\R_+,\R^d)\times\C(\R_+,\M(\R^l))$.
To prove that, it suffices to verify $\{(X^\eps,\mu^\eps)\}_{\eps>0}$ satisfying the (extended) Puhalskii's criteria (see
\cite[Theorem 3.1]{LP92} and
\cite[Remark 4.2]{FeKu},
namely, $\forall \ell,t>0$,
\begin{equation}\label{p-cre-11}
\lim_{L\to\infty}\limsup_{\eps\to 0}\eps\log \PP^\eps\Big(\sup_{s\in[0,t]}|X^\eps_s|>L\Big)=-\infty,
\end{equation}
\vspace{-10pt}
\begin{equation}\label{p-cre-12}
\lim_{\delta\to 0}\limsup_{\eps\to0}\sup_{s\in [0,t]}\eps\log\PP^\eps\Big(\sup_{s\leq s_1\leq s+\delta}\left|X^\eps_{s_1}-X^\eps_s\right|>\ell\Big)=-\infty,
\end{equation}
\vspace{-10pt}
\begin{equation}\label{p-cre-21}
\lim_{L\to\infty}
\limsup_{\eps\to0}
\eps\log\PP^\eps \Big(\mu^\eps\big([0,t],\{y\in \R^l : |y| > L\}\big)> \ell\Big) =-\infty,
\end{equation}
\vspace{-10pt}
\begin{equation}\label{p-cre-22}
\lim_{\delta\to0}
\limsup_{\eps\to0}
\sup_{s\in[0,t]}
\eps\log\PP^\eps\Big( \sup_{s_1\in[s,s+\delta]}d(\mu^\eps_{s_1},\mu^\eps_s)> \ell \Big)= -\infty.
\end{equation}

\begin{rem}{\rm
	In general, \eqref{p-cre-11}-\eqref{p-cre-22} only imply the sequentially exponential
	tightness (i.e., any subsequence is exponentially tight). However, because
$(X^\eps,\mu^\eps)$ is continuous in $\eps$ in distribution, it is true
	that $(X^\eps,\mu^\eps)$ is exponentially tight although in our proof, only the sequentially exponential
	tightness is needed.
}\end{rem}

From equation \eqref{eq-xpy}, by the variation of parameter formula, we  obtain
\begin{equation}\label{formulap}
p^\eps_t=x_1^\eps e^{-A_\eps(t)}+\dfrac 1{\eps^2}\int_0^te^{-A_\eps(t,s)}F^\eps_s(X^\eps_s,Y^\eps_s)ds,
\end{equation}
where for any $0\leq s\leq t, \eps>0$,
$A_\eps(t,s):=\dfrac 1{\eps^2}\int_s^t\lambda^\eps_r(X_r^\eps,Y^\eps_r)dr,\quad A_\eps(t)=A_\eps(t,0).$

\begin{proof}[Proof of \eqref{p-cre-11}]
	It is readily seen that
	\begin{equation}\label{eq-lnX-1}\barray
	\ad\ln (1+|X^\eps_t|^2)-\ln (1+|x_0^\eps|^2)=\int_0^t \frac{2(X^\eps_s)^\top p^\eps_s}{1+|X^\eps_s|^2}ds\\
	=\ad\int_0^t \frac{2(X^\eps_s)^\top x_1^\eps}{1+|X^\eps_s|^2}e^{-A_\eps(s)}ds+\frac 1{\eps^2}\int_0^t\frac{2}{1+|X^\eps_s|^2}(X^\eps_s)^\top\Big(\int_0^s e^{-A_\eps(s,r)}F^\eps_r(X^\eps_r,Y^\eps_r)dr\Big).
\earray
	\end{equation}
Denote
$
v^\eps_t=\frac{2}{1+|X^\eps_t|^2}(X^\eps_t)^\top\Big(\int_0^t e^{-A_\eps(t,r)}F^\eps_r(X^\eps_r,Y^\eps_r)dr\Big).
$	
	We have from \eqref{eq-lnX-1} and Lemma \ref{lm-intbypart} that
\begin{equation}\label{eq-lnX-11}
\begin{aligned}
\ad \ln (1+|X^\eps_t|^2)-\ln (1+|x_0^\eps|^2)\\
\ad =\int_0^t \frac{2(X^\eps_s)^\top x_1^\eps}{1+|X^\eps_s|^2}e^{-A_\eps(s)}ds+\int_0^t\frac{2(X^\eps_s)^\top F^\eps_s(X^\eps_s,Y^\eps_s)}{(1+|X^\eps_s|^2)\lambda^\eps_s(X^\eps_s,Y^\eps_s)}ds\\
\ad \ +\int_0^t\frac{2}{\lambda^\eps_s(X^\eps_s,Y^\eps_s)}\left(\frac{p^\eps_s}{1+|X^\eps_s|^2}-\frac{2(X^\eps_s[X^\eps_s]^\top)p^\eps_s}{(1+|X^\eps_s|^2)^2}\right)^\top\left(\int_0^s e^{-A_\eps(s,r)}F^\eps_r(X^\eps_r,Y^\eps_r)dr\right)ds\\
\ad \ -\frac{v^\eps_t}{\lambda^\eps_t(X^\eps_t,Y^\eps_t)}-\int_0^t\frac{v^\eps_s}{[\lambda^\eps_s(X^\eps_s,Y^\eps_s)]}d\lambda^\eps(X^\eps_s,Y^\eps_s)\\
\ad\
+ \int_0^t \frac{v^\eps_s}{[\lambda^\eps_s(X^\eps_s,Y^\eps_s)]^3}\langle d\lambda^\eps_s(X^\eps_s,Y^\eps_s),d\lambda^\eps_s(X^\eps_s,Y^\eps_s)\rangle_s.
\end{aligned}
\end{equation}	
	Combining \eqref{eq-lnX-11} and the It\^o Lemma, one has
\bea
	\begin{aligned}
	\ad	
\ln (1+|X^\eps_t|^2)-\ln (1+|x_0^\eps|^2)\\
		\aad =\int_0^t \frac{2(X^\eps_s)^\top x_1^\eps}{1+|X^\eps_s|^2}e^{-A_\eps(s)}ds+\int_0^t\frac{2(X^\eps_s)^\top F^\eps_s(X^\eps_s,Y^\eps_s)}{(1+|X^\eps_s|^2)\lambda^\eps_s(X^\eps_s,Y^\eps_s)}ds\\ \aad +\int_0^t\frac{2}{\lambda^\eps_s(X^\eps_s,Y^\eps_s)}\left(\frac{p^\eps_s}{1+|X^\eps_s|^2}-\frac{2(X^\eps_s[X^\eps_s]^\top)p^\eps_s}{(1+|X^\eps_s|^2)^2}\right)^\top\Big(\int_0^s e^{-A_\eps(s,r)}F^\eps_r(X^\eps_r,Y^\eps_r)dr\Big)ds\\
		\aad -\frac{v^\eps_t}{\lambda^\eps_t(X^\eps_t,Y^\eps_t)}
 -\int_0^t \frac{v^\eps_s}{[\lambda^\eps_s(X^\eps_s,Y^\eps_s)]^2}\nabla_s\lambda^\eps_s(X^\eps_s,Y^\eps_s)ds
         \\
         \aad-\int_0^t \frac{v^\eps_s}{[\lambda^\eps_s(X^\eps_s,Y^\eps_s)]^2}[\nabla_X\lambda^\eps_s(X^\eps_s,Y^\eps_s)]^\top p^\eps_sds\\
         \aad-\frac 1{\eps}\int_0^t\frac{v^\eps_s}{[\lambda^\eps_s(X^\eps_s,Y^\eps_s)]^2}[\nabla_Y\lambda^\eps_s(X^\eps_s,Y^\eps_s)]^\top b^\eps_s(X^\eps_s,Y^\eps_s)ds\\
 \aad-\frac 1{2\eps}\int_0^t\frac{v^\eps_s}{[\lambda^\eps_s(X^\eps_s,Y^\eps_s)]^2}\left\|\nabla_{YY}\lambda^\eps_s(X^\eps_s,Y^\eps_s)\right\|_{\Sigma^\eps_s(X^\eps_s,Y^\eps_s)}^2ds\\
       \aad +\frac 1{\eps}\int_0^t\frac{v^\eps_s}{[\lambda^\eps_s(X^\eps_s,Y^\eps_s)]^3}\left\|\nabla_{Y}\lambda^\eps_s(X^\eps_s,Y^\eps_s)\right\|_{\Sigma^\eps_s(X^\eps_s,Y^\eps_s)}^2ds\\
\aad -\frac 1{\sqrt\eps}\int_0^t \frac{v^\eps_s}{[\lambda^\eps_s(X^\eps_s,Y^\eps_s)]^2}[\nabla_Y\lambda^\eps_s(X^\eps_s,Y^\eps_s)]^\top \sigma^\eps_s(X^\eps_s,Y^\eps_s)dW_s\\
\aad \      =:\;K^\eps_t+\frac 1{\sqrt\eps}\int_0^t \frac{-v^\eps_s}{[\lambda^\eps_s(X^\eps_s,Y^\eps_s)]^2}[\nabla_Y\lambda^\eps_s(X^\eps_s,Y^\eps_s)]^\top \sigma^\eps_s(X^\eps_s,Y^\eps_s)dW_s,
	\end{aligned}
 \eea
where $K^\eps_t$ is the remaining in the right-hand side.
Therefore, we get
\begin{equation}\label{eq-lnX-2}
\frac1\eps \left[\ln (1+|X^\eps_t|^2)-\ln (1+|x_0^\eps|^2)\right]=\frac 1\eps\hat K^\eps_t+D^\eps_t,
\end{equation}
where
\bea \ad
\hat K^\eps_t:=\Big[K^\eps_t+\frac 1{\eps^2}\int_0^t \frac{|v^\eps_s|^2}{[\lambda^\eps_s(X^\eps_s,Y^\eps_s)]^4}
\left\|\nabla_Y\lambda^\eps_s(X^\eps_s,Y^\eps_s)
\right\|_{\Sigma^\eps_s(X^\eps_s,Y^\eps_s)}^2ds\Big],
\ \hbox{ and }\\
\ad
D^\eps_t=\frac 1{\eps\sqrt\eps}\int_0^t \frac{-v^\eps_s}{[\lambda^\eps_s(X^\eps_s,Y^\eps_s)]^2}
[\nabla_Y\lambda^\eps_s(X^\eps_s,Y^\eps_s)]^\top \sigma^\eps_s(X^\eps_s,Y^\eps_s)dW_s\\
\aad \qquad \  -\frac 1{\eps^3}\int_0^t \frac{|v^\eps_s|^2}{[\lambda^\eps_s(X^\eps_s,Y^\eps_s)]^4}
\left\|\nabla_Y\lambda^\eps_s(X^\eps_s,Y^\eps_s)
\right\|_{\Sigma^\eps_s(X^\eps_s,Y^\eps_s)}^2ds.
\eea	
Let $\zeta^\eps_L=\inf\left\{t\geq 0:|X^\eps_t|>L\right\}$.
It is obvious that $\zeta^\eps_L$ is an $\F^\eps$-stopping time.
Since $D^\eps_t$ is a local martingale, we have from \eqref{eq-lnX-2} that
\begin{equation}\label{eq-E-eps}
\E^\eps\exp\left\{\frac 1\eps\left[\ln (1+|X^\eps_{t\wedge\zeta^\eps_L}|^2)-\ln (1+|x_0^\eps|^2)-\hat K^\eps_{t\wedge\zeta^\eps_L}\right]\right\}\leq 1.
\end{equation}

On the other hand, from \eqref{formulap} and Assumption \ref{asp-2},  and noting
$\int_0^t e^{-A_\eps(t,s)}ds\leq \int_0^t e^{-\frac{\kappa_0(t-s)}{\eps^2}}ds\leq \frac{\eps^2}{\kappa_0},$ one obtains that there is a finite constant $C_{t,L}$ depending only on $t,L$ satisfies that for all sufficiently small $\eps$,
\begin{equation}\label{eq-p-eps}
|p^\eps_{t\wedge\zeta^\eps_L}|\leq C_{t,L}+x_1^\eps e^{-A_\eps(t)}.
\end{equation}
Similarly, we have for $\eps$ small
\begin{equation}\label{eq-v-eps}
\barray
 |v^\eps_{t\wedge\zeta^\eps_L}|\leq C_{t,L}\int_0^t e^{-\frac{\kappa_0(t-s)}{\eps^2}}ds\leq \eps^2C_{t,L}.\earray
\end{equation}
Combining \eqref{eq-p-eps}, \eqref{eq-v-eps}, the definition of $\hat K^\eps_t$,  Assumption \ref{asp-2}, and
$\limsup_{\eps\to0}\eps x^{\eps}_1<\infty$ 
yields that as $\eps$ being small
\begin{equation}\label{eq-B-eps}
\barray
|\hat K^\eps_{t\wedge\zeta^\eps_L}|\leq C+\eps C_{t,L},
\earray
\end{equation}	
where $C$ is some finite constant
depending neither $\eps$ nor $t, L$.
Therefore, from \eqref{eq-E-eps} and \eqref{eq-B-eps}, we have that for all $\eps$ sufficiently small,
\bea
\E^\eps\exp\Big\{\frac 1\eps\Big[\ln (1+|X^\eps_{t\wedge\zeta^\eps_L}|^2)-\ln (1+|x_0^\eps|^2)-C-\eps C_{t,L}\Big]\Big\}\leq 1.
\eea
Thus, one has for any $t,L,N>0$,
\begin{equation}\label{eq-tight1-1}
\barray
\PP^\eps \ad
\Big(\sup_{s\in[0,t]}|X^\eps_s|>L\Big)=\PP^\eps(|X^\eps_{t\wedge\zeta^\eps_L}|>L)\\
\ad \leq \PP^\eps(|x_0^\eps|>N)+\E^\eps\exp\left\{\frac 1\eps\left[\ln(1+|X^\eps_{t\wedge\zeta^\eps_L}|^2)-\ln (1+L^2)\right]\right\}\1_{\{|x_0^\eps|\leq N\}}\\
\ad \leq\PP^\eps(|x_0^\eps|>N)+\exp\left\{\frac 1\eps\left[\ln (1+N^2)+C+\eps C_{t,L}-\ln (1+L^2)\right]\right\}.
\earray
\end{equation}
From \eqref{eq-tight1-1} and the logarithm equivalence principle \cite[Lemma 1.2.15]{DZ98}, we obtain that for all $t,N>0$,
\begin{equation}\label{eq-tight1-2}
\lim_{L\to\infty}\limsup_{\eps\to 0}\eps\log \PP\Big(\sup_{s\in[0,t]}|X^\eps_s|>L\Big)\leq\limsup_{\eps\to 0}\eps\log \PP^\eps\Big(|x_0^\eps|>N\Big).
\end{equation}
Because $\{x_0^\eps\}_{\eps>0}$ is exponentially tight and \eqref{eq-tight1-2}, we obtain \eqref{p-cre-11} for any $t>0$.
\end{proof}

\begin{proof}[Proof of \eqref{p-cre-12}]
	By applying Lemma \ref{lm-intbypart} to \eqref{formulap}, one has
	\begin{equation}\label{eq-X-1}
		\begin{aligned}
	X^\eps_t\ad =x_0^\eps+\int_0^tp^\eps_sds=x_0^\eps+\int_0^tx_1^\eps e^{-A_\eps(s)}ds+\dfrac 1{\eps^2}\int_0^t\int_0^se^{-A_\eps(s,r)}F^\eps_s(X^\eps_r,Y^\eps_r)drds
	\\
\ad =x_0^\eps+x_1^\eps\int_0^t e^{-A_\eps(r)}dr+\int_0^t \frac{F^\eps_r(X^\eps_r,Y^\eps_r)}{\lambda^\eps_r(X^\eps_r,Y^\eps_r)}dr\\
\aad-\frac{1}{\lambda^\eps_t(X^\eps_t,Y^\eps_t)}
\int_0^te^{-A_\eps(t,r)}F^\eps_r(X^\eps_r,Y^\eps_r)dr\\
	\aad \ -\int_0^t \frac{1}{[\lambda^\eps_s(X^\eps_s,Y^\eps_s)]^2}\left(\int_0^s e^{-A_\eps(s,r)}F^\eps_r(X^\eps_r,Y^\eps_r)dr\right)d\lambda^\eps_s(X^\eps_s,Y^\eps_s)\\
	\aad \ +\int_0^t \frac{1}{[\lambda^\eps_s(X^\eps_s,Y^\eps_s)]^3}\left(\int_0^s e^{-A_\eps(s,r)}F^\eps_r(X^\eps_r,Y^\eps_r)dr\right)\langle d\lambda^\eps_s(X^\eps_s,Y^\eps_s),d\lambda^\eps_s(X^\eps_s,Y^\eps_s)\rangle_s.
\end{aligned}
	\end{equation}
	We obtain from \eqref{eq-X-1} and  It\^o's formula that	
	\begin{equation}\label{eq-X-2}
	\begin{aligned}
\ad	X^\eps_t=x_0^\eps+x_1^\eps\int_0^t e^{-A_\eps(r)}dr+\int_0^t \frac{F^\eps_r(X^\eps_r,Y^\eps_r)}{\lambda^\eps_r(X^\eps_r,Y^\eps_r)}dr\\
	\aad-\frac{1}{\lambda^\eps_t(X^\eps_t,Y^\eps_t)}\int_0^te^{-A_\eps(t,r)}F^\eps_r(X^\eps_r,Y^\eps_r)dr\\
	\aad -\int_0^t \frac{\nabla_s\lambda^\eps_s(X^\eps_s,Y^\eps_s) }{[\lambda^\eps_s(X^\eps_s,Y^\eps_s)]^2}\left(\int_0^s e^{-A_\eps(s,r)}F^\eps_r(X^\eps_r,Y^\eps_r)dr\right)ds\\
	\aad -\int_0^t \frac{[\nabla_X\lambda^\eps_s(X^\eps_s,Y^\eps_s)]^\top p^\eps_s }{[\lambda^\eps_s(X^\eps_s,Y^\eps_s)]^2}\left(\int_0^s e^{-A_\eps(s,r)}F^\eps_r(X^\eps_r,Y^\eps_r)dr\right)ds\\
	\aad -\frac 1\eps\int_0^t \frac{[\nabla_Y\lambda^\eps_s(X^\eps_s,Y^\eps_s)]^\top b^\eps_s(X^\eps_s,Y^\eps_s)}{[\lambda^\eps_s(X^\eps_s,Y^\eps_s)]^2}\left(\int_0^s e^{-A_\eps(s,r)}F^\eps_r(X^\eps_r,Y^\eps_r)dr\right)ds\\
	\aad -\frac{1}{2\eps}\int_0^t\frac{\left\|\nabla_{YY}\lambda^\eps_s(X^\eps_s,Y^\eps_s)\right\|_{\Sigma^\eps_s(X^\eps_s,Y^\eps_s)}^2}{[\lambda^\eps_s(X^\eps_s,Y^\eps_s)]^2}\left(\int_0^s e^{-A_\eps(s,r)}F^\eps_r(X^\eps_r,Y^\eps_r)dr\right)ds\\
	\aad +\frac{1}{\eps}\int_0^t\frac{\left\|\nabla_{Y}\lambda^\eps_s(X^\eps_s,Y^\eps_s)\right\|_{\Sigma^\eps_s(X^\eps_s,Y^\eps_s)}^2}{[\lambda^\eps_s(X^\eps_s,Y^\eps_s)]^3}\left(\int_0^s e^{-A_\eps(s,r)}F^\eps_r(X^\eps_r,Y^\eps_r)dr\right)ds\\
	&-\frac 1{\sqrt\eps}\int_0^t \frac{\big(\int_0^s e^{-A_\eps(s,r)}F^\eps_r(X^\eps_r,Y^\eps_r)dr\big)}{[\lambda^\eps_s(X^\eps_s,Y^\eps_s)]^2}\big([\nabla_Y\lambda^\eps_s(X^\eps_s,Y^\eps_s)]^\top \sigma^\eps_s(X^\eps_s,Y^\eps_s)dW_s\big)\\
\ad 	=:\bar K^\eps_t-\bar D^\eps_t,
\end{aligned}
	\end{equation}
	where
	$$
	\bar D^\eps_t:=\frac 1{\sqrt\eps}\int_0^t \frac{\left(\int_0^s e^{-A_\eps(s,r)}F^\eps_r(X^\eps_r,Y^\eps_r)dr\right)}{[\lambda^\eps_s(X^\eps_s,Y^\eps_s)]^2}\left([\nabla_Y\lambda^\eps_s(X^\eps_s,Y^\eps_s)]^\top \sigma^\eps_s(X^\eps_s,Y^\eps_s)dW_s\right),
	$$
	and $\bar K^\eps_t$ is the remaining in the right-hand side of \eqref{eq-X-2}.
	By the regularity of $\lambda^\eps_t$,
it is not difficult to see that
	\begin{equation}\label{eq-lambda-1}
	\begin{aligned}
 &\Big|\frac{1}{\lambda^\eps_t(X^\eps_t,Y^\eps_t)}\int_0^te^{-A_\eps(t,r)}F^\eps_r(X^\eps_r,Y^\eps_r)ds-\frac{1}{\lambda^\eps_s(X^\eps_s,Y^\eps_s)}\int_0^se^{-A_\eps(s,r)}F^\eps_r(X^\eps_r,Y^\eps_r)ds\Big|\\
	&\leq \Big|\frac{e^{-A_\eps(t)}}{\lambda^\eps_t(X^\eps_t,Y^\eps_t)}-\frac{e^{-A_\eps(s)}}{\lambda^\eps_s(X^\eps_s,Y^\eps_s)}\Big|\int_0^s e^{A_\eps(r)}|F^\eps_r(X^\eps_r,Y^\eps_r)|dr\\
	&\quad+\frac{e^{-A_\eps(t)}}{\lambda^\eps_t(X^\eps_t,Y^\eps_t)}\int_s^te^{A_\eps(r)}|F^\eps_r(X^\eps_r,Y^\eps_r)|dr\\
	&\leq  C\frac{|t-s|}{\eps^2}\sup_{r\in[s,t]}|\lambda^\eps_r(X^\eps_r,Y^\eps_r)|\int_0^se^{-\frac{\kappa_0}{\eps^2}(s-r)}|F^\eps_r(X^\eps_r,Y^\eps_r)|dr\\
	&\quad+C\int_s^te^{-\frac{\kappa_0}{\eps^2}(t-r)} |F^\eps_r(X^\eps_r,Y^\eps_r)|dr.
	\end{aligned}
	\end{equation}
	We obtain from definition of $\bar K^\eps_t$,
an application of \eqref{eq-lambda-1},
and
recalling definition of $\zeta^\eps_L$
that there is a finite constant $C_L$ depending only on $L$ such that for all small $\eps$,
	\begin{equation}
	\sup_{s\in[0,T]}\sup_{t\in[s,s+\delta]}|\bar K^\eps_{t\wedge\zeta^\eps_L}-\bar K^\eps_{s\wedge\zeta^\eps_L}|\leq C_{T,L}\delta,\text{ for all }T>0, 0<\delta<1.
	\end{equation}
	
	Now, let $T>0, \ell>0$ be fixed, and
$L>0$ be fixed but otherwise arbitrary.
	We have that for any small $\delta$ satisfying $\delta<1$, $C_{T,L}\delta<\ell/2$ and small $\eps$
	\begin{equation}\label{eq-Xt-Xs}
	\barray
	\PP^\eps \ad\Big(\sup_{t\in[s,s+\delta]}|X^\eps_t-X^\eps_s|> \ell\Big)\\
	\ad\leq \PP^\eps(\zeta^\eps_L\leq T+1)+\PP^\eps \Big(\sup_{t\in[s,s+\delta]}|X^\eps_{t\wedge\zeta^\eps_L}-X^\eps_{s\wedge\zeta^\eps_L}|> \ell\Big)\\
	\ad\leq \PP^\eps(\zeta^\eps_L\leq T+1)+\PP^\eps \Big(\sup_{t\in[s,s+\delta]}|\bar D^\eps_{t\wedge\zeta^\eps_L}-\bar D^\eps_{s\wedge\zeta^\eps_L}|> \frac{\ell}2\Big)\\
	\ad\leq \PP^\eps(\sup_{t\in[0,T+1]}|X^\eps_t|>L)+\sum_{k=1}^d\PP^\eps \Big(\sup_{t\in[s,s+\delta]}|\bar D^{\eps,k}_{t\wedge\zeta^\eps_L}-\bar D^{\eps,k}_{s\wedge\zeta^\eps_L}|> \frac{\ell}2\Big),
	\earray\end{equation}
    where $\bar D^{\eps,k}_t$ is the $k$-th component of $\bar D^\eps_t$, $k=1,\dots,d$. It is readily seen that $\{\bar D^{\eps,k}_{t\wedge\zeta^\eps_L}-\bar D^{\eps,k}_{s\wedge\zeta^\eps_L}\}_{t\geq s}$ is a martingale with the quadratic variations bounded by
    $$
    \frac {C_L}{\eps}\int_{s\wedge\zeta^\eps_L}^{t\wedge\zeta^\eps_L}\int_0^s e^{-\frac{2\kappa_0(s-r)}{\eps^2}}drds\leq \eps C_L\delta.
    $$
    By the exponential
     martingale inequality \cite[Theorem 7.4,
    p. 44]{Mao97}, we have
   \begin{equation}\label{eq-Dt-Ds}
   \barray   \PP^\eps \ad \Big(\sup_{t\in[s,s+\delta]}|\bar D^{\eps,k}_{t\wedge\zeta^\eps_L}-\bar D^{\eps,k}_{s\wedge\zeta^\eps_L}|> \frac{\ell}2\Big)
   \\ \ad\leq \PP^\eps \Big(\sup_{t\in[s,s+\delta]}|\bar D^{\eps,k}_{t\wedge\zeta^\eps_L}-\bar D^{\eps,k}_{s\wedge\zeta^\eps_L}|> \frac{\ell}4+\frac{\ell}{4\eps C_L\delta}\eps C_L\delta\Big)\\
   \ad\leq \exp\Big\{-\frac{\ell^2}{8\eps C_L\delta}\Big\}.
   \earray
   \end{equation}
   Combining \eqref{eq-Xt-Xs} and \eqref{eq-Dt-Ds}, the logarithm equivalence principle \cite[Lemma 1.2.15]{DZ98} yields that
   \begin{equation}
   	\begin{aligned}
   \lim_{\delta\to0}&\limsup_{\eps\to0}\sup_{s\in[0,T]}\eps\log\PP^\eps \Big(\sup_{t\in[s,s+\delta]}|X^\eps_t-X^\eps_s|> \ell\Big)\\
   &\leq\limsup_{\eps\to0} \eps\log\PP^\eps(\sup_{t\in[0,T+1]}|X^\eps_t|>L),\; \forall L>0.
   \end{aligned}
   \end{equation}
   Letting $L\to\infty$ and using \eqref{p-cre-11}, we obtain \eqref{p-cre-12}.
\end{proof}

\begin{proof}[Proof of \eqref{p-cre-21} and \eqref{p-cre-22}]
Once we established the exponential tightness of $\{X^\eps\}_{\eps>0}$, the proof of \eqref{p-cre-21} and \eqref{p-cre-22} for $\{\mu^\eps\}_{\eps>0}$, which is in fact the occupation measure of a diffusion,
 is similar to that of the
 first-order coupled systems. As a consequence, such proofs can be found in \cite[p. 3134]{Puh16}.
\end{proof}

\subsubsection{Characterization of
Rate Function}\label{sec:iden}
Let $\beta(s)\in \C(\R_+,\R^d)$ be a step function satisfying that there are $0=t_0<t_1<\dots<t_m<\infty$ and $\beta_i\in\R, i=1,\dots,m$ such that
\begin{equation}\label{eq-beta}\barray
\beta(s)=\sum_{i=1}^m \beta_i\1_{[t_{i-1},t_{i})}(s).\earray
\end{equation}
For $\varphi_s\in\C(\R_+,\R^d)$ and $\beta(s)$ of the form \eqref{eq-beta}, we define
\begin{equation}\label{eq-betadx}\barray
\int_0^t \beta(s)d\varphi_s:=\sum_{i=1}^{m}\beta_i^\top(\varphi_{t\wedge t_i}-\varphi_{t\wedge t_{i-1}}).\earray
\end{equation}
Now, for each step function $\beta(s)$, each $f(t,x,y)$
real-valued $\C^{1,2,2}(\R_+\times \R^d \times \R^l)$-function with compact support in $y$ locally uniformly in $(t, x)$, and
each $(\varphi,\mu)\in \C(\R_+,\R^d)\times \C_{\uparrow}(\R_+,\M(\R^l))$,
let
\begin{equation}\label{eq-phi-1}
\begin{aligned}
\Phi_t^{\beta,f}(\varphi,\mu):=\ad\int_0^t \beta(s) d\varphi_s-\int_0^t \int_{\R^l} \frac{[\beta(s)]^\top F_s(\varphi_s,y)}{\lambda_s(\varphi_s,y)}\mu(ds,dy)\\
\ad \ -\int_0^t\int_{\R^l} [\nabla_y f(s,\varphi_s,y)]^\top b_s(\varphi_s,y)\mu(ds,dy)\\
\ad \ -\frac 12\int_0^t\int_{\R^l} \tr \Big(\Sigma_s(\varphi_s,y)\nabla_{yy}f(s,\varphi_s,y)\Big)\mu(ds,dy)\\
\ad -\frac 12 \int_0^t\int_{\R^l} \|\nabla_yf(s,\varphi_s,y)\|^2_{\Sigma_s(\varphi_s,y)}\mu(ds,dy).
\end{aligned}
\end{equation}
Moreover, let $\tau(\varphi,\mu)$ be
a continuous function of $(\varphi,\mu)\in\C(\R_+,\R^d) \times \C_{\uparrow}(\R_+,\M(\R^l))$ that is also a stopping time relative to the flow
$G = (G_t,t\in\R_+)$ on
$\C(\R_+,\R^d)\times \C_{\uparrow}(\R_+,\M(\R^l))$
of the $\sigma$-algebra
 $G_t$
 generated by the mappings $\varphi \to \varphi_s$ and $\mu \to \mu_s$ for $s \leq t.$
Let us also assume that $\varphi_{t\wedge\tau(\varphi,\mu)}$ is a bounded function of $(\varphi,\mu)$.
It is  seen that under Assumption \ref{asp-3}, $\Phi^{\beta,f}_t(\varphi,\mu)$ is continuous in $(\varphi,\mu)$.

Next, define
\begin{equation}\label{I*}
\I^*(\varphi,\mu)=\sup_{\beta,f,t,\tau} \Phi^{\beta,f}_{t\wedge\tau(\varphi,\mu)}(\varphi,\mu),
\end{equation}
where the supremum is taken over $\beta(s)$, $f(s,x,y)$, and $\tau(\varphi,\mu)$ satisfying the
requirements as the above and over $t\geq 0$.
It is seen that $\I^*$ is lower semi-continuous in $(\varphi,\mu)$.

Now, let $\hat\I$ be a large deviations limit rate functions or (large deviations) LD limit points of
$\{(X^\eps,\mu^\eps)\}_{\eps>0}$ (i.e., a rate function of some subsequence of $\{(X^\eps,\mu^\eps)\}_{\eps>0}$ that obeys the LDP)
such that $\hat\I(\varphi,\mu) = \infty$ unless $\varphi_0 = \hat x$, where $\hat x$ is a preselected element of $\R^d$.
This restriction will be removed in Section \ref{sec:com}.
We will identify the rate functions.
For any such a large deviation limit point $\hat \I$, we aim to prove $\hat\I=\I^*$
 by showing
 the upper bound
  $\hat\I\geq \I^*$
and
the lower bound
$\hat\I\leq \I^*$;
see detail in Section \ref{sec:IsIhat} and Section \ref{sec:iden-com}.
Moreover, it 
will be seen that $\I^*(\varphi,\mu) =
\I(\varphi,\mu)$ provided
$\I^*(\varphi,\mu) < \infty$, $\varphi_0 =\hat x$, and $\I_0(\hat x)= 0.$
Throughout this section, the assumptions in Theorem \ref{thm-main} are always assumed to be satisfied.

\begin{rem}\label{rem-25}{\rm
Note that $\I^*$ is defined
similarly
but not identical
as that in the case of first-order coupled systems in \cite{Puh16} although the solution of \eqref{eq:F-setup}
shares the same rate function with the corresponding first-order system.
Compared with \cite{Puh16}, $\I^*$ is defined by taking the supremum over smaller space when we did not allow $\beta$ to be a function of $X$.
	This modification has an important role in the proof of the lower bound of the LD limits, i.e., the inequality $\I^*\leq \hat \I$.
	Otherwise, it would be
	impossible
	to control terms containing the derivative $p^\eps$ of $X^\eps$, specially the integral involving $p^\eps$ and the diffusion part of the fast process (see \eqref{eq-betadx-1}).
	Meanwhile, it would have led to a difficulty in proving the upper bound of the LD limits, i.e., the inequality $\I^*\geq \hat \I$.
However, it will be shown that we still can get the upper bound, as in the first-order system (in \cite{Puh16});
see the details in Section \ref{sec:iden-com}.
}\end{rem}

\subsubsection{Lower Bound of Large Deviations Limits}\label{sec:IsIhat}
This section is devote to proving $\I^*\leq \hat \I$.
We have the following theorem.

\begin{thm}\label{thm-hatI}
	Let $\hat \I$ be a LD limit point of $\{(X^\eps,\mu^\eps)\}_{\eps>0}$. For any $t>0$, $\beta,f,\tau$ are as given
above,
	\begin{equation}\label{eq-phi-I}\barray
	\sup_{(\varphi,\mu)\in\C(\R_+,\R^d) \times \C_{\uparrow}(\R_+,
		\M(\R^l))}\Big(\Phi^{\beta,f}_{t\wedge\tau(\varphi,\mu)}(\varphi,\mu)-\hat \I(\varphi,\mu)\Big)=0. \earray
	\end{equation}
Then $\I^*(\varphi,\mu)\leq\hat\I(\varphi,\mu)$ for all $(\varphi,\mu)\in \C(\R_+,\R^d) \times \C_{\uparrow}(\R_+,
	\M(\R^l))$.
\end{thm}

\begin{proof}
	For $\beta(s)$ being of the form \eqref{eq-beta} and $f(s,x,y)$ being a function with compact support in $y$ locally uniformly in $(t,x)$, denote
\begin{equation}\label{eq-phi-3}
\begin{aligned}
\ad\Gamma^{\eps,\beta}_t(\varphi,\mu)\\
\ad  =-\int_0^t\int_{\R^l} [\beta(s)]^\top x_1^\eps e^{-A^{\varphi,y}_\eps(s)}\mu(ds,dy)\\
\aad+\int_0^t\int_{\R^l}\frac{\beta(t)}{\lambda^\eps_t(\varphi_t,y)}e^{-A_\eps^{\varphi,y}(t,s)}F^\eps_s(\varphi_s,y)\mu(ds,dy)\\
\aad  +\int_0^t\int_{\R^l} [\beta(s)]^\top\left(\int_0^s e^{-A^{\varphi,y}_\eps(s,r)}F^\eps_r(\varphi_r,y)dr\right)\frac{\nabla_s\lambda^\eps_s(\varphi_s,y) }{[\lambda^\eps_s(\varphi_s,y)]^2}\mu(ds,dy)\\
\aad +\int_0^t \int_{\R^l}[\beta(s)]^\top\left(\int_0^s e^{-A^{\varphi,y}_\eps(s,r)}F^\eps_r(\varphi_r,y)dr\right)\frac{[\nabla_X\lambda^\eps_s(\varphi_s,y)]^\top \dot\varphi_s }{[\lambda^\eps_s(\varphi_s,y)]^2}\mu(ds,dy)\\
\aad+\frac 1\eps\int_0^t\int_{\R^l}[\beta(s)]^\top\left(\int_0^s e^{-A^{\varphi,y}_\eps(s,r)}F^\eps_r(\varphi_r,y)dr\right)\frac{[\nabla_Y\lambda^\eps_s(\varphi_s,y)]^\top b^\eps_s(\varphi_s,y)}{[\lambda^\eps_s(\varphi_s,y)]^2}\mu(ds,dy)\\
\aad+\frac{1}{2\eps}\int_0^t \int_{\R^l}[\beta(s)]^\top\left(\int_0^s e^{-A^{\varphi,y}_\eps(s,r)}F^\eps_r(\varphi_r,y)dr\right)\frac{\left\|\nabla_{YY}\lambda^\eps_s(\varphi_s,y)\right\|_{\Sigma^\eps_s(\varphi_s,y)}^2}{[\lambda^\eps_s(\varphi_s,y)]^2}\mu(ds,dy)\\
\aad-\frac{1}{\eps}\int_0^t\int_{\R^l} [\beta(s)]^\top\left(\int_0^s e^{-A^{\varphi,y}_\eps(s,r)}F^\eps_r(\varphi_r,y)dr\right)\frac{\left\|\nabla_{Y}\lambda^\eps_s(\varphi_s,y)\right\|_{\Sigma^\eps_s(\varphi_s,y)}^2}{[\lambda^\eps_s(\varphi_s,y)]^3}\mu(ds,dy)\\
\aad-\frac 1{2\eps^2}\int_0^t\int_{\R^l} \frac{\Big|[\beta(s)]^\top\left(\int_0^s e^{-A^{\varphi,y}_\eps(s,r)}F^\eps_r(\varphi_r,y)dr\right)\Big|^2}{[\lambda^\eps_s(\varphi_s,y)]^4}\left\|\nabla_Y\lambda^\eps_s(\varphi_s,y)\right\|_{\Sigma^\eps_s(\varphi_s,y)}^2\mu(ds,dy),\\
\aad +\frac1{\eps}\int_0^t\int_{\R^l}\frac{[\beta(s)]^\top\left(\int_0^s e^{-A^{\varphi,y}_\eps(s,r)}F^\eps_r(\varphi_r,y)dr\right)}{[\lambda^\eps_s(\varphi_s,y)]^2}\left\|\nabla_Y\lambda^\eps_s(\varphi_s,y)\right\|_{\Sigma^\eps_s(\varphi_s,y)}^2\mu(ds,dy),
\end{aligned}
\end{equation}
where
$
A^{\varphi,y}_\eps(t,s):=\frac 1{\eps^2}\int_s^t\int_{\R^l}\lambda^\eps_r(\varphi_r,y)\mu(dr,dy),
$
and
\begin{equation}\label{eq-phi-4}
\barray
\Psi_t^{\eps,\beta,f}(\varphi,\mu)\ad := f(t,\varphi_t,Y^\eps_t)-f(0,\varphi_0,y_0^\eps)-\int_0^t \int_{\R^l} \nabla_sf(s,\varphi_s,y)\mu(ds,dy)\\
\aad -\int_0^t\int_{\R^l} [\nabla_x f(s,\varphi_s,y)]^\top \dot\varphi_s\mu(ds,dy),
\earray
\end{equation}
and
\begin{equation}\label{eq-phi-2}
\barray
\Phi_t^{\eps,\beta,f}(\varphi,\mu):=\ad\int_0^t \beta(s) d\varphi_s-\int_0^t \int_{\R^l} \frac{[\beta(s)]^\top F^\eps_s(\varphi_s,y)}{\lambda^\eps_s(\varphi_s,y)}\mu(ds,dy)\\
\aad -\int_0^t\int_{\R^l} [\nabla_y f(s,\varphi_s,y)]^\top b^\eps_s(\varphi_s,y)\mu(ds,dy)\\
\aad-\frac 12\int_0^t\int_{\R^l} \tr \Big(\Sigma^\eps_s(\varphi_s,y)\nabla_{yy}f(s,\varphi_s,y)\Big)\mu(ds,dy)\\
\aad -\frac 12 \int_0^t\int_{\R^l} \|\nabla_yf(s,\varphi_s,y)\|^2_{\Sigma^\eps_s(\varphi_s,y)}\mu(ds,dy).
\earray
\end{equation}
We have from \eqref{formulap} and Lemma \ref{lm-intbypart} that
\begin{equation}\label{eq-betadx-1}
\barray
\ad\int_0^t [\beta(s)]^\top dX^\eps_s=\int_0^t[\beta(s)]^\top p^\eps_sds\\
\ad =\int_0^t [\beta(s)]^\top x_1^\eps e^{-A_\eps(s)}ds+\frac 1{\eps^2}\int_0^t [\beta(s)]^\top\int_0^s e^{-A_\eps(s,r)}F^\eps_r(X^\eps_r,Y^\eps_r)dr\\
\ad =\int_0^t [\beta(s)]^\top x_1^\eps e^{-A_\eps(s)}ds+\int_0^t\frac{[\beta(s)]^\top F^\eps_s(X^\eps_s,Y^\eps_s)}{\lambda^\eps_s(X^\eps_s,Y^\eps_s)}ds\\
\aad \ -\frac{\beta(t)}{\lambda^\eps_t(X^\eps_t,Y^\eps_t)}\int_0^te^{-A_\eps(t,s)}F^\eps_s(X^\eps_s,Y^\eps_s)ds\\
\aad \ -\int_0^t[\beta(s)]^\top\left(\int_0^s e^{-A_\eps(s,r)}F^\eps_r(X^\eps_r,Y^\eps_r)dr\right)\frac{\nabla_s\lambda^\eps_s(X^\eps_s,Y^\eps_s) }{[\lambda^\eps_s(X^\eps_s,Y^\eps_s)]^2}ds\\
\aad \ -\int_0^t [\beta(s)]^\top\left(\int_0^s e^{-A_\eps(s,r)}F^\eps_r(X^\eps_r,Y^\eps_r)dr\right)\frac{[\nabla_X\lambda^\eps_s(X^\eps_s,Y^\eps_s)]^\top p^\eps_s }{[\lambda^\eps_s(X^\eps_s,Y^\eps_s)]^2}ds\\
\aad \ -\frac 1\eps\int_0^t[\beta(s)]^\top\left(\int_0^s e^{-A_\eps(s,r)}F^\eps_r(X^\eps_r,Y^\eps_r)dr\right)\frac{[\nabla_Y\lambda^\eps_s(X^\eps_s,Y^\eps_s)]^\top b^\eps_s(X^\eps_s,Y^\eps_s)}{[\lambda^\eps_s(X^\eps_s,Y^\eps_s)]^2}ds\\
\aad \ -\frac{1}{2\eps}\int_0^t [\beta(s)]^\top\left(\int_0^s e^{-A_\eps(s,r)}F^\eps_r(X^\eps_r,Y^\eps_r)dr\right)\frac{\left\|\nabla_{YY}\lambda^\eps_s(X^\eps_s,Y^\eps_s)\right\|_{\Sigma^\eps_s(X^\eps_s,Y^\eps_s)}^2}{[\lambda^\eps_s(X^\eps_s,Y^\eps_s)]^2}ds\\
\aad \ +\frac{1}{\eps}\int_0^t [\beta(s)]^\top\left(\int_0^s e^{-A_\eps(s,r)}F^\eps_r(X^\eps_r,Y^\eps_r)dr\right)\frac{\left\|\nabla_{Y}\lambda^\eps_s(X^\eps_s,Y^\eps_s)\right\|_{\Sigma^\eps_s(X^\eps_s,Y^\eps_s)}^2}{[\lambda^\eps_s(X^\eps_s,Y^\eps_s)]^3}ds\\
\aad \ -\frac 1{\sqrt\eps}\int_0^t\frac{[\beta(s)]^\top\left(\int_0^s e^{-A_\eps(s,r)}F^\eps_r(X^\eps_r,Y^\eps_r)dr\right)}{[\lambda^\eps_s(X^\eps_s,Y^\eps_s)]^2}[\nabla_Y\lambda^\eps_s(X^\eps_s,Y^\eps_s)]^\top \sigma^\eps_s(X^\eps_s,Y^\eps_s)dW_s.
\earray
\end{equation}
Moreover, It\^o's formula yields that
\begin{equation}\label{eq-itof}
\barray
\ad f(t,X^\eps_t,Y^\eps_t)-f(0,x_0^\eps,y_0^\eps)=\int_0^t \nabla_s f(s,X^\eps_s,Y^\eps_s)ds+\int_0^t [\nabla_Xf(s,X^\eps_s,Y^\eps_s)]^\top p^\eps_sds\\
\aad \ +\frac 1\eps\int_0^t[\nabla_Yf(s,X^\eps_s,Y^\eps_s)]^\top b^\eps_s(X^\eps_s,Y^\eps_s)ds+\frac 1{\sqrt\eps}[\nabla_Yf(s,X^\eps_s,Y^\eps_s)]^\top\sigma^\eps_s(X^\eps_s,Y^\eps_s)dW_s\\
\aad \ +\frac 1{2\eps}\int_0^t\tr\Big(\Sigma^\eps_s(X^\eps_s,Y^\eps_s)\nabla_{YY}f(s,X^\eps_s,Y^\eps_s)\Big)ds.
\earray
\end{equation}

Combining \eqref{eq-itof},  \eqref{eq-betadx-1},  the definition of $\Phi^{\eps,\beta,f}_t$, $\Gamma^{\eps,\beta}_t$, $\Psi^{\eps,\beta,f}_t$ in \eqref{eq-phi-2},  \eqref{eq-phi-3}, and \eqref{eq-phi-4}, we obtain that
\begin{equation}\label{eq-1eps-mar}
\barray
\ad\frac1\eps \Big[\Phi^{\eps,\beta,f}_t(X^\eps,\mu^\eps)
+\Gamma^{\eps,\beta}_t(X^\eps,\mu^\eps)\Big]+\Psi^{\eps,\beta,f}_t(X^\eps,\mu^\eps)\\
\ad=\frac 1{\sqrt\eps}\int_0^t[\nabla_Yf(s,X^\eps_s,Y^\eps_s)]^\top\sigma^\eps_s(X^\eps_s,Y^\eps_s)dW_s\\
\aad-\frac 1{2\eps} \int_0^t \|\nabla_Yf(s,X^\eps_s,Y^\eps_s)\|^2_{\Sigma^\eps_s(X^\eps_s,Y^\eps)}ds
\\
\aad-\frac 1{\eps\sqrt\eps}\int_0^t\frac{[\beta(s)]^\top\left(\int_0^s e^{-A_\eps(s,r)}F^\eps_r(X^\eps_r,Y^\eps_r)dr\right)}{[\lambda^\eps_s(X^\eps_s,Y^\eps_s)]^2}[\nabla_Y\lambda^\eps_s(X^\eps_s,Y^\eps_s)]^\top \sigma^\eps_s(X^\eps_s,Y^\eps_s)dW_s\\
\aad -\frac 1{2\eps^3}\int_0^t \frac{\big|[\beta(s)]^\top\left(\int_0^s e^{-A_\eps(s,r)}F^\eps_r(X^\eps_r,Y^\eps_r)dr\right)\big|^2}{[\lambda^\eps_s(X^\eps_s,Y^\eps_s)]^4}\left\|\nabla_Y\lambda^\eps_s(X^\eps_s,Y^\eps_s)\right\|_{\Sigma^\eps_s(X^\eps_s,Y^\eps_s)}^2ds\\
\aad +\frac1{\eps^2}\int_0^t\frac{[\beta(s)]^\top\left(\int_0^s e^{-A_\eps(s,r)}F^\eps_r(X^\eps_r,Y^\eps_r)dr\right)}{[\lambda^\eps_s(X^\eps_s,Y^\eps_s)]^2}\left\|\nabla_Y\lambda^\eps_s(X^\eps_s,Y^\eps_s)\right\|_{\Sigma^\eps_s(X^\eps_s,Y^\eps_s)}^2ds.
\earray
\end{equation}
Since the right-hand side of \eqref{eq-1eps-mar} is a local martingale and $\tau(X^\eps,\mu^\eps)$ is a stopping time with respect to $\F^\eps$ due to the measurability of $X^\eps_t,\mu^\eps_t$ with respect to $\F_t^\eps$, we have that
\begin{equation}\label{eq-conv}
\barray
\E^\eps\exp\Big\{\frac 1\eps\Big[\Phi^{\eps,\beta,f}_{t\wedge \tau(X^\eps,\mu^\eps)}(X^\eps,\mu^\eps)\!+\!\Gamma^{\eps,\beta}_{t\wedge \tau(X^\eps,\mu^\eps)}(X^\eps,\mu^\eps)\!+\!\eps\Psi^{\eps,\beta,f}_{t\wedge \tau(X^\eps,\mu^\eps)}(X^\eps,\mu^\eps)\Big]\Big\}=1.
\earray
\end{equation}


\begin{lm}\label{lm-vah}$($\cite[Theorem 2.1.10]{DS89}$)$
	Assume that the net $\{\nu_\eps\}_{\eps>0}$ is  exponentially tight and let $\hat \I$ represent an LD limit point of $\{\nu_\eps\}_{\eps>0}$. Let $\Phi_\eps$ be a net of uniformly bounded real-valued functions on $\mathbb S$ such that $\int_{\mathbb S}\exp(\frac 1\eps \Phi_\eps(z))\nu_\eps(dz) = 1.$
	If $\Phi_\eps$ converges to $\Phi$ uniformly on compact sets (as $\eps\to0$)
with the function $\Phi$ being
continuous, then $\sup_{z\in\mathbb S}(\Phi(z) - \hat \I(z)) = 0.$
 \end{lm}

As in the proof of \eqref{p-cre-11} and \eqref{p-cre-12}  in Section \ref{sec:exp}, it is not difficult to obtain from  Assumption \ref{asp-2} and the fact $\varphi_{t\wedge\tau(\varphi,\mu)}$ is bounded function of $(\varphi,\mu)$ that there is finite a constant $C$, which is independent of $\eps$ such that for all small enough $\eps$
$
|\Gamma^{\eps,\beta}_{t\wedge \tau(\varphi,\mu)}(\varphi,\mu)|\leq C\eps\text{ uniformly over }(\varphi,\mu).
$
Similarly, there is a constant $C$ such that for  $\eps$  sufficiently small,
$
|\Psi^{\eps,\beta,f}_{t\wedge \tau(\varphi,\mu)}(\varphi,\mu)|<C\text{ uniformly over }(\varphi,\mu).
$
As a result, one has
\begin{equation}\label{eq-conv-1}
\Gamma^{\eps,\beta}_{t\wedge \tau(\varphi,\mu)}(\varphi,\mu)+\eps\Psi^{\eps,\beta,f}_{t\wedge \tau(\varphi,\mu)}(\varphi,\mu)\to 0
\end{equation}
as $\eps\to0$ uniformly in compact sets.
Finally, by assumption \eqref{eq-asp-conv}, we have
\begin{equation}\label{eq-conv-3}
	\Phi^{\eps,\beta,f}_{t\wedge \tau(\varphi,\mu)}(\varphi,\mu)\to\Phi^{\beta,f}_{t\wedge \tau(\varphi,\mu)}(\varphi,\mu)
\end{equation}
as $\eps\to0$ uniformly in compact sets.
Combining \eqref{eq-conv} and \eqref{eq-conv-1} and then applying Lemma \ref{lm-vah} yields \eqref{eq-phi-I}.
Then, it follows immediately that $\I^*(\varphi,\mu)\leq\hat\I(\varphi,\mu)$ for all $(\varphi,\mu)\in \C(\R_+,\R^d) \times \C_{\uparrow}(\R_+,
\M(\R^l))$.
The proof is complete.
\end{proof}

\subsubsection{Upper Bound of Large Deviations Limits}\label{sec:iden-com}
Let $\hat\I$ be a large deviations limit point of
$\{(X^\eps,\mu^\eps)\}_{\eps>0}$
such that $\hat\I(\varphi,\mu) = \infty$ unless $\varphi_0 = \hat x$,
a preselected element of $\R^d$.
In this section, we aim to prove that
$\hat \I(\varphi,\mu)\leq \I^*(\varphi,\mu)$, for any $(\varphi,\mu)\in \C(\R_+,\R^d) \times \C_{\uparrow}(\R_+,
\M(\R^l))$ such that $\varphi_0=\hat x$.
The completion of the proof will be given later in Section \ref{sec:com}.
With the results established in Sections \ref{sec:exp} and \ref{sec:IsIhat},
this part can be done similarly to that of \cite[Sections 6-8]{Puh16}  because  the rate function has a similar variational representation.
Although our $\I^*$ is
defined as the supremum in a smaller space than in \cite{Puh16}, we can still prove $\hat \I\leq \I^*$ by a similar argument as in \cite{Puh16}.
We will only provide a sketch of the main ideas  and highlight the differences, whereas
detailed arguments will be referred to
\cite[Sections 6-8]{Puh16}.

It is obvious that it suffices to consider the case $\I^*(\varphi,\mu)<\infty.$
Therefore, we should investigate the regularity of $(\varphi,\mu)$ provided $\I^*(\varphi,\mu)<\infty$ first.
It is shown in \cite[Section 6]{Puh16} that if $(\varphi,\mu)\in \C(\R_+,\R^d)\times\C_{\uparrow}(\R_+,\R^l)$, $\I^*(\varphi,\mu)<\infty$ then
$\mu(ds,dy)=m_s(y)dyds$ and
$\varphi_s$ is absolutely continuous (w.r.t Lebesgue measure on $\R_+$), $m_s(y)$ is a probability density function in $\R^l$.
In this case, $\I^*$ has the following representation
\begin{equation}\label{eq-I*}
	\begin{aligned}
		&\I^*(\varphi,\mu)\\
		=&\int_0^\infty \Bigg(\sup_{\beta\in\R^d}\bigg(\beta^\top \dot\varphi_s-\beta^\top\int_{\R^l} \frac{ F_s(\varphi_s,y)m_s(y)}{\lambda_s(\varphi_s,y)}dy\bigg)\\
		&+\sup_{h\in\C_0^1(\R^l)}\int_{\R^l} [\nabla_y h(y)]^\top \bigg(\frac12\di(\Sigma_s(\varphi_s,y)m_s(y))-b_s(\varphi_s,y)m_s(y)\bigg)dy\\
		&-\frac 12 \int_{\R^l} \|\nabla_yh(y)\|^2_{\Sigma_s(\varphi_s,y)}m_s(y)dy\Bigg)ds\\
		=&\int_0^\infty \Bigg(\sup_{\beta\in\R^d}\bigg(\beta^\top \dot\varphi_s-\beta^\top\int_{\R^l} \frac{ F_s(\varphi_s,y)m_s(y)}{\lambda_s(\varphi_s,y)}dy\bigg)\\
		&+\sup_{g\in\mathbb L_0^{1,2}(\R^l,\R^l,\Sigma_s(\varphi_s,y),
			m_s(y) dy)}\int_{\R^l}\bigg([g(y)]^\top \Sigma_s(\varphi_s,y)\bigg(\frac{\nabla_ym_s(y)}{2m_s(y)}-\J_{s,m_s(\cdot),\varphi_s}(y)\bigg)\\
		&\hspace{2.5cm}-\frac 12\|g(y)\|_{\Sigma_s(\varphi_s,y)}\bigg)m_s(y)\bigg)dy\Bigg)ds.
	\end{aligned}
\end{equation}
In the above, $\mathbb L_0^{1,2}(\R^l,\R^l,\Sigma_s(\varphi_s,y),
m_s(y) dy)$ represents the closure of the set of the gradients of functions from
$\C^\infty_0 (\R^l)$ in $\mathbb L^2(\R^l,\R^l, \Sigma_s(\varphi_s,y),m_s(y) dy)$, in which $\mathbb L^2(\R^l,\R^l, \Sigma_s(\varphi_s,y),m_s(y) dy)$ is the Hilbert space of $\R^d$-valued functions (of $y$) in $\R^l$ endowed with the norm $\|f\|_{\Sigma,m}^2\!=\!\int_{\R^l}\!\|f(y)\|^2_{\Sigma_s(\varphi_s,y)}m_s(y)dy$, and for each $(t,x)\in\R_+\times\R^d$, each function $m(\cdot)$ being a probability density function in $\R^l$, $\J_{t,m(\cdot),u}$ is a function of $y$ defined by
$$
\J_{t,m(\cdot),u}(y)=\Pi_{ \Sigma_t(x,\cdot),m(\cdot)}(\Sigma_t(x,y)^{-1}(b_t(x,y) - \di_x \Sigma_t(x,y)/2)),
$$ where $\Pi_{ \Sigma_t(x,\cdot),m(\cdot)}$ maps a function $\phi(y)\in \mathbb L^2_{\rm loc}(\R^l,\R^l, \Sigma_t(x,y),m(y) dy)$, which is the space consisting of functions whose products with arbitrary $\C_0^\infty$-functions belong to $\mathbb L^2(\R^l,\R^l, \Sigma_t(x,y),m(y) dy)$,
into
$$\Pi_{ \Sigma_t(x,\cdot),m(\cdot)}\phi(y)\in \mathbb L_0^{1,2}(\R^l,\R^l, \Sigma_t(x,y),m(y) dy)$$
and satisfies that, for all
$h\in\C^\infty_0 (\R^l)$,
$$
\int_{\R^l} [\nabla h(y)]^\top \Sigma_t(x,y)\Pi_{ \Sigma_t(x,\cdot),m(\cdot)}\phi(y)m(y)dy = \int_{\R^l} [\nabla h(y)]^\top \Sigma_t(x,y)\phi(y)m(y) dy.
$$
If $\phi(y)\in \mathbb L^2(\R^l,\R^l,\Sigma_t(x,y),m(y) dy)$, then $\Pi_{ \Sigma_t(x,\cdot),m(\cdot)}\phi(y)$ is nothing than the orthogonal projection of $\phi$ onto $\mathbb L_0^{1,2}(\R^l,\R^l, \Sigma_t(x,y),m(y) dy)$.
Moreover, it is readily seen from \eqref{eq-I*} that
\begin{equation}\label{eq-28-2}
\text{if }\I^*(\varphi,\mu)<\infty\text{ then }\dot\varphi_s=\int_{\R^l}\frac{F_s(\varphi_s,y)}{\lambda_s(\varphi_s,y)}m_s(y)dy.
\end{equation}
In addition,
the supremum in the last term
in \eqref{eq-I*} is attained at
\begin{equation}\label{eq-hatg}
\hat g(y)=\frac{\nabla_ym_s(y)}{2m_s(y)}-\J_{s,m_s(\cdot),\varphi_s}(y)
\end{equation}
so that
\begin{equation}\label{I*-alternative}
\I^*(\varphi,\mu)=\frac12\int_0^\infty \int_{\R^l}\Big\|\frac{\nabla_ym_s(y)}{2m_s(y)}-\J_{s,m_s(\cdot),\varphi_s}(y)\Big\|^2_{\Sigma_s(\varphi_s,y)}m_s(y)dyds.
\end{equation}
This combining with \eqref{rate-alternative} also show the equality of $\I$ and $\hat I^*$.

Now, we proceed to
 the proof, which contains two main steps.

\para{Step (i): Identify the LD limits at sufficiently regular (dense) points.}

\begin{thm}\label{thm-hat-m}
	Assume that the assumptions of Theorem {\rm \ref{thm-main}} hold.
	Let $\hat \I$ be a LD limit point of
	$\{(X^\eps$,$\mu^\eps)\}_{\eps>0}$
such that $\hat\I(\varphi,\mu) = \infty$ unless $\varphi_0 = \hat x$.
Let $(\hat\varphi,\hat \mu)\in \G$ be such that $\hat\varphi_0=\hat x$ and $\hat\mu(ds,dy)=\hat m_s(dy)ds$, where $\hat m_s(y)$ has the form
\begin{equation}\label{eq-hat-m}\barray
\hat m_s(y)=M_s\bigg(\wdt m_s(y)\hat\eta^2\big(\frac{|y|}{r}\big)+e^{-a |y|}\Big(1-\hat\eta^2\big(\frac{|y|}{r}\big)\Big)\bigg),\earray
\end{equation}
where $\wdt m_s(y)$ is a probability density in $y$
locally bounded away from zero
and belonging to $\C^1(\R^l)$ as a function of $y$ with $|\nabla m_s(y)|$ being locally bounded
in $(s,y)$, and $\hat\eta(y)$ is a nonincreasing $[0,1]$-valued $\C^1_0(\R_+)$-function, with
$y\in\R_+$,
that equals $1$ for $y\in[0,1]$ and equals $0$ for $y \geq 2$; $r > 0$ and $a> 0$, and $M_s$ is the
normalizing constant. For given $\wdt m_ s(y)$, $\hat\eta (y)$, and $r$, there exists $a_0 > 0 $ such
that for all $a > a_0$, $\hat \I(\varphi,\mu)= \I^*(\hat \varphi,\hat \mu)$.
\end{thm}

\para{Technical lemmas.}
Before proving Theorem \ref{thm-hat-m}, we first need some technical lemmas.
For $\beta,h$ as in Section \ref{sec:iden}, we denote
\begin{equation}\label{eq-tau-1}\barray
\ad\tau^{\beta,h}_N(\varphi,\mu)
\\
\ad=\inf\bigg\{t\in\R_+: \int_0^t\int_{\R^l}\|\nabla_yh_s(\varphi_s,y)\|_{\Sigma_s(\varphi_s,y)}\mu(ds,dy)+\sup_{s\in[0,t]}|\varphi_s|+t\geq N\bigg\}.\earray
\end{equation}
Performing
integrating by parts in \eqref{eq-phi-1} yields that
\begin{equation}\label{eq-phi-5}
	\begin{aligned}
		\Phi_t^{\beta,h}(\varphi,\mu):=\int_0^t& \bigg(\beta(s) \dot\varphi_s-[\beta(s)]^\top\int_{\R^l} \frac{ F_s(\varphi_s,y)m_s(y)}{\lambda_s(\varphi_s,y)}\\
		&+\int_{\R^l} [\nabla_y h(s,\varphi_s,y)]^\top \bigg(\frac12\di(\Sigma_s(\varphi_s,y)m_s(y))-b_s(\varphi_s,y)m_s(y)\bigg)dy\\
		&-\frac 12 \int_{\R^l} \|\nabla_yh(s,\varphi_s,y)\|^2_{\Sigma_s(\varphi_s,y)}m_s(y)dy\bigg)ds.
	\end{aligned}
\end{equation}
Let
\begin{equation}\label{eq-theta-1}
\theta_N^{\beta,h}(\varphi,\mu):=\Phi^{\beta,h}_{\tau_N^{\beta,h}}(\varphi,\mu),
\end{equation}
and for each $\delta>0$,
$K_\delta := \{(\varphi,\mu): \hat \I(\varphi,\mu)\leq \delta\}$.
The following are some technical lemmas needed for the proof of Theorem \ref{thm-hat-m}.

\begin{lm}\label{tlm-1}{\rm(\cite[Lemma 7.1]{Puh16})}
{\tt
 (Approximation of $\tau,\theta$)}
  Under the following conditions for the boundedness, and the convergence (uniformly in $K_\delta$) of $\{\beta^i_s\}_{i=1}^{\infty}$, $\{h_s^i(x,y)\}_{i=1}^\infty$ to $\beta_s$, $h_s(x,y)$:
\begin{equation}\label{eq-1lemma-1}\barray
\int_0^N |\beta_s|^2ds+\int_0^N \ess\sup_{(\varphi,\mu\in K_\delta)}\int_{\R^l} |\nabla h_s(\varphi_s,y)|m_s(y)dyds<\infty,\earray
\end{equation}
\begin{equation}\label{eq-1lemma-2}\barray
\lim_{i\to\infty}\ad\int_0^N |\beta_s-\beta_s^i|^2ds\\
\ad+
\lim_{i\to\infty}\sup_{(\varphi,\mu)\in K_{\delta}}\int_0^N\int_{\R^l}|\nabla h_s(\varphi_s,y)-\nabla h_s^i(\varphi_s,y)|^2m_s(y)dyds=0,\earray
\end{equation}
we have the convergence  $$\tau^{\beta^i,h^i}_N(\varphi,\mu)\overset{i\to\infty}{\longrightarrow}\tau^{\beta,h}_N(\varphi,\mu)\text{ and }\theta^{\beta^i,h^i}_N(\varphi,\mu)\overset{i\to\infty}{\longrightarrow}\theta^{\beta,h}_N(\varphi,\mu) \text{ uniformly in }K_\delta.$$
\end{lm}

\begin{lm}\label{tlm-2}{\rm{(\cite[Lemma 7.2]{Puh16})}}
{\tt
(Localizing supremum)} If $h_s(x,y)$
is measurable and belongs to class $\mathbb W^{1,1}_{{\rm loc}}$ in $y$, vanishes when $y$ is outside of some open ball in $\R^l$ locally uniformly in $(s, x)$, and such that
the derivative $Dh_s(x, y)$ is continuous in $(x, y)$ for almost all $s\in\R_+$, and that
$\int_0^N \sup_{x\in\R^d:|x|\leq L }\int_{\R^l} |Dh_s(x, y)|^q dyds <\infty$ for all $q > 1$ and $L > 0$. Then,
$
\sup_{(\varphi,\mu)\in K_{\delta}}(\theta^{\beta,h}_N(\varphi,\mu)-\hat\I(\varphi,\mu))=0,
$
and the supremum is attained.
\end{lm}

\begin{lm}\label{tlm-3}{\rm(\cite[Lemma 7.3]{Puh16})}
{\tt
(Regularities and growth-rate properties of a certain (dense) class)} Assume $m_s(y)$ is an $\R_+$-valued measurable function and is a probability density in $y$ for almost every $s$ and is bounded away from zero on bounded sets of $(s,y)$ and is in $\C^1(\R^l)$, with $|\nabla m_s(y)|$ being locally bounded in $(s,y)$, and $m_s(y) = M_se^{-a|y|} $ ($a > 0$) for all $|y|$ large enough locally uniformly in $s$.
There exists an $a_0$ such that if $a>a_0$,
  there is a $w_s(x,\cdot)$ such that
$\J_{s,m_s(\cdot),u}(\cdot)=\nabla w_s(x,\cdot)$ and satisfies certain regularity and growth-rate properties {\rm\cite[(7.13)-(7.15)]{Puh16}}.
\end{lm}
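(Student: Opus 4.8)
The statement is \cite[Lemma 7.3]{Puh16}, so the first thing I would do is verify that under Assumption \ref{asp-3} the coefficients $b_s(x,\cdot)$ and $\Sigma_s(x,\cdot)$ satisfy exactly the hypotheses used there: $\Sigma_s$ is $\C^1$ in $y$ with bounded, locally Lipschitz $\di_y\Sigma_s$ and is uniformly positive definite, while $b_s$ is Lipschitz in $y$ (hence of at most linear growth) and dissipative in the sense of \eqref{eq-cond-sta-2}, with all the relevant bounds locally uniform in $(s,x)$. Granting this, the result transfers verbatim; for completeness I would reconstruct the argument as follows. By the defining property of $\Pi_{\Sigma_s(x,\cdot),m_s(\cdot)}$, the assertion $\J_{s,m_s(\cdot),u}(\cdot)=\nabla_y w_s(x,\cdot)$ (with $u=x$) is equivalent to saying that $w_s(x,\cdot)$ is a weak solution of the weighted divergence-form equation
$$\di_y\big(\Sigma_s(x,y)\,m_s(y)\,\nabla_y w_s(x,y)\big)=\di_y\big(\Sigma_s(x,y)\,m_s(y)\,\phi_s(x,y)\big),\qquad \phi_s(x,y):=\Sigma_s(x,y)^{-1}\big(b_s(x,y)-\tfrac12\di_y\Sigma_s(x,y)\big).$$
Because $\phi_s(x,\cdot)$ is of at most linear growth in $y$ while $m_s(y)=M_s e^{-a|y|}$ for $|y|$ large, $\phi_s(x,\cdot)\in\mathbb L^2(\R^l,\R^l,\Sigma_s(x,y),m_s(y)dy)$ for every $a>0$; hence $\Pi_{\Sigma_s(x,\cdot),m_s(\cdot)}\phi_s(x,\cdot)$ is the genuine orthogonal projection onto $\mathbb L_0^{1,2}$, and the existence of $w_s(x,\cdot)\in\mathbb W^{1,2}_{\rm loc}(\R^l)$ with $\nabla_y w_s(x,\cdot)=\Pi_{\Sigma_s(x,\cdot),m_s(\cdot)}\phi_s(x,\cdot)$ follows from the Lax--Milgram theorem applied to the bilinear form $(u,v)\mapsto\int_{\R^l}[\nabla u]^\top\Sigma_s(x,y)\nabla v\,m_s(y)dy$ on the weighted Sobolev space modulo additive constants, coercivity being a weighted Poincar\'e inequality for the measure $e^{-a|y|}dy$.

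For the regularity part of \cite[(7.13)--(7.15)]{Puh16} I would invoke interior elliptic theory: the coefficient matrix $\Sigma_s(x,\cdot)m_s(\cdot)$ is $\C^1$, locally bounded and locally uniformly elliptic in $y$ with bounds locally uniform in $(s,x)$, so the standard $\mathbb W^{2,p}_{\rm loc}$ (hence $\C^{1,\alpha}_{\rm loc}$) estimates for the displayed equation promote $w_s(x,\cdot)$ to a $\C^1$ function of $y$, with all estimates locally uniform in $(s,x)$. Local Lipschitz dependence of $w_s$ and $\nabla_y w_s$ on $x$, and the joint continuity/Lipschitz properties in $(x,y)$, are then obtained by forming difference quotients in $x$ of the equation and re-applying the same elliptic estimates together with the local Lipschitz dependence of $b_s,\Sigma_s$ on $x$ from Assumption \ref{asp-3}.

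The remaining and genuinely delicate point is the growth bound in $|y|$, and this is exactly where $a>a_0$ enters. Rewriting the equation in non-divergence form, the first-order part of the operator carries the drift $\nabla_y\log m_s(y)$, which equals $-a\,y/|y|$ once $|y|\ge 2r$: for $a$ large this is a strongly confining drift, and, combined with the dissipativity \eqref{eq-cond-sta-2} of $b_s$, it allows one to construct radial super- and sub-solutions of polynomial type (multiples of $(1+|y|^2)^k$) that dominate the right-hand side, whose size is controlled by the linear growth of $\phi_s$. A comparison argument then produces pointwise bounds of the form $|w_s(x,y)|\le C(1+|y|^2)$ and $|\nabla_y w_s(x,y)|\le C(1+|y|)$, with $C$ locally uniform in $(s,x)$, which are the assertions \cite[(7.13)--(7.15)]{Puh16}; the threshold $a_0$ is dictated precisely by how large $a$ must be for this confinement to beat both the linear growth of $\phi_s$ and the non-exponential behaviour of $m_s=M_s\wdt m_s$ on the ball $|y|\le r$, whence its dependence on $\wdt m_s$, $\hat\eta$ and $r$. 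This barrier step is the main obstacle; everything else is routine elliptic analysis, and since our structural hypotheses coincide with those of \cite{Puh16}, the work specific to the present paper is only the verification, carried out above, that Assumption \ref{asp-3} supplies all the ingredients the proof needs, after which we may quote \cite[Lemma 7.3]{Puh16} directly.
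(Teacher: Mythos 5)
Your proposal is correct and matches what the paper actually does: the paper offers no proof of this lemma at all, importing it verbatim as \cite[Lemma 7.3]{Puh16}, with the only implicit work being the observation that Assumption \ref{asp-3} supplies the structural hypotheses (ellipticity and $\C^1$-regularity of $\Sigma$, Lipschitz and dissipative $b$) under which the cited result applies --- exactly the verification you carry out before quoting it. Your additional reconstruction of the internal argument (Lax--Milgram for the weighted divergence-form equation, interior elliptic regularity, and barrier/comparison estimates explaining the threshold $a_0$) is a plausible sketch of Puhalskii's proof but goes beyond anything in this paper, so it cannot be checked against it; as a justification of the statement as used here, citing \cite[Lemma 7.3]{Puh16} after the hypothesis check is precisely the paper's route.
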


\begin{proof}[Proof of Theorem {\rm\ref{thm-hat-m}}]
Let $a_0$ and then $\hat w_s(x,y)$ be as in
Lemma \ref{tlm-3}
for $\hat m_s(y)$. Let $\hat\beta=0$ and
$\hat h_s(x,y)=\frac 12 \ln \hat m_s(y)-\hat w_s(x,y).$
Then
$
\nabla \hat h_s(x,y)=\frac{\nabla \hat m_s(y)}{2\hat m_s(y)}-\nabla \hat w_s(x,y).
$
We want to apply
Lemma \ref{tlm-2}
for $\hat\beta$, $\hat h_s(x,y)$. However, $\hat h_s(x,y)$ might not have
a compact support in
$y$. Hence, in order to use
Lemma \ref{tlm-2},
we need to restrict it to a compact set.
Therefore, we shall truncate $\hat h_s(x,y)$. Let
$\eta(t)$ represent an $\R_+$-valued nonincreasing $\C^\infty_0(\R_+)$-function such that $\eta(t) = 1$
for $0 \leq t \leq 1$ and $\eta(t) = 0$ for $t\geq 2$. Let $\hat w^i_s(x,y) =\hat w_s(x,y)\eta(\frac{|y|}i)$ and
$\hat h_s^i(x,y)=\frac 12\eta\Big(\frac{|y|}i\Big) \ln \hat m_s(y)-\hat w_s^i(x,y).$
As in \cite[Lemma 7.4]{Puh16},
we can prove that $\hat h_s(x,y)$ satisfies
the
conditions in
Lemma \ref{tlm-2}.

Next, given $N\in\N$, let $\tau_N^{0,\hat h^i}$ and $\theta_N^{0,\hat h^i}$ be
defined by the respective equations \eqref{eq-tau-1} and \eqref{eq-theta-1} with $\beta=0$ and $h=\hat h^i_s(x,y)$. Since the functions $\beta=0$ and $\hat h^i_s(x, y)$ satisfy the hypothesis in
Lemma \ref{tlm-2}, there exists $(\varphi^{N,i},\mu^{N,i})\in\G$ such
that $\theta_N^{0,\hat h^i}(\varphi^{N,i},\mu^{N,i}) = \hat\I(\varphi^{N,i},\mu^{N,i})$ and $(\varphi^{N,i},\mu^{N,i})\in K_{2N+2}$ for all $i$.
In particular,
$\mu^{N,i}(ds, dy) = m^{N,i}_s (y) dy ds$, where $m^{N,i}_s (\cdot)$ belongs to $\mathcal P(\R^l)$, and the
set $\{(\varphi^{N,i},\mu^{N,i}),i = 1,2,\dots\}$ is relatively compact. Since $\hat \I(\varphi^{N,i},\mu^{N,i}) \geq \I^*(\varphi^{N,i},\mu^{N,i})$ and $\theta_N^{0,\hat h^i}(\varphi^{N,i},\mu^{N,i}) \leq \I^*(\varphi^{N,i},\mu^{N,i})$, one has
\begin{equation}\label{eq-conv-theta}
\theta_N^{0,\hat h^i}(\varphi^{N,i},\mu^{N,i})= \I^*(\varphi^{N,i},\mu^{N,i})= \hat\I(\varphi^{N,i},\mu^{N,i}).
\end{equation}
Extract a convergent subsequence (
still denoting the index by $i$) $\mu^{N,i}\to \mu^N$ in $\C_{\uparrow}(\R_+,\M(\R^l))$ and $\varphi^{N,i}\to\varphi^N$ in $\C(\R_+,\R^d)$.

Because of \eqref{eq-conv-theta} and \eqref{eq-phi-5}, $\I^*(\varphi^{N,i},\mu^{N,i})$ obtains supremum at $\hat h^i_s(x,y)$ when $s\leq \tau^{0,\hat h}_N(\varphi^{N,i},\mu^{N,i})$.
Therefore, by using \eqref{eq-hatg}, we can characterize $m_s^{N,i}$ (noted that $\mu^{N,i}(ds,dy)=m_s^{N,i}(y)dyds$) and then can show that the convergence of  \eqref{eq-1lemma-1} and \eqref{eq-1lemma-2} in the hypothesis of
Lemma \ref{tlm-1}
are satisfied (see \cite[(7.46)-(7.48)]{Puh16}). Thus, by
Lemma \ref{tlm-1},
we have that
$\tau_N^{0,\hat h^i}(\varphi^{N,i},\mu^{N,i}) \to\tau_N^{0,\hat h}(\varphi^N,\mu^N)$ as $i\to\infty$, and that $m^{N,i}_s (y) \to\hat m_s(y)$ in $\mathbb L^1([0,\tau_N^{0,\hat h}(\varphi^N,\mu^N)] \times \R^l)$. Therefore,
$\mu^N(ds,dy) = \hat m_s(y) dyds$ for almost all $s \leq \tau_N^{0,\hat h}(\varphi^N,\mu^N)$.

Using
$\dot \varphi^{N,i}_s= \int_{\R^l} \frac{F_s(\varphi_s^{N,i},y)m^{N,i}_s (y)}{\lambda_s(\varphi_s^{N,i},y)} dy$ due to \eqref{eq-28-2} and
applying \cite[Lemma 6.7]{Puh16}, we obtain
from the convergence of $\varphi^{N,i} \to \varphi^N$ in $\C(\R_+,\R^d)$ and $m^{N,i}_s (y) \to \hat m_s(y)$ in $\mathbb L^1([0,\tau_N^{0,\hat h}(\varphi^N,\mu^N)] \times
\R^l)$ as $i\to\infty$
that $\dot\varphi_s^N = \int_{\R^l} \frac{F_s(\varphi_s^N,y)\hat m_s(y)}{\lambda_s(\varphi^N_s,y)} dy$ a.e. for $s \leq \tau_N^{0,\hat h}(\varphi^N,\mu^N)$. By the uniqueness, $\varphi_s^N = \hat \varphi_s$ for $s \leq \tau_N^{0,\hat h}(\varphi^N,\mu^N)$. As a byproduct, $\dot\varphi_s^{N,i} \to \dot{\hat\varphi}_s$ as $i \to\infty$ a.e. on
$[0,\tau_N^{0,\hat h}(\varphi^N,\mu^N)]$.

We have proved that $\tau_N^{0,\hat h}(\varphi^N,\mu^N) = \tau_N^{0,\hat h}(\hat \varphi,\hat\mu)$ and $\varphi_s^N = \hat \varphi_s$, $\mu^N_s=\hat \mu_s$ for $s \leq \tau_N^{0,\hat h}(\hat\varphi,\hat\mu)$ so that
$\theta_N^{0,\hat h}(\varphi^N,\mu^N) =\theta_N^{0,\hat h}(\hat\varphi,\hat\mu)$. We can show that
$
\theta_N^{0,\hat h}(\varphi^N,\mu^N)=\lim_{i\to\infty} \theta_N^{0,\hat h^i}(\varphi^{N,i},\mu^{N,i}).
$
Therefore, taking the limit in \eqref{eq-conv-theta}, we have
$\I^*(\varphi^N,\mu^N)\geq \theta_N^{0,\hat h}(\varphi^N,\mu^N)\geq \hat \I(\varphi^N,\mu^N),
$ which together with the fact $\I^*\leq\hat \I$ obtained in previous section implies that
$
\I^*(\varphi^N,\mu^N)= \theta_N^{0,\hat h}(\varphi^N,\mu^N)= \hat \I(\varphi^N,\mu^N).
$
Therefore, we have for all $N>0$,
\begin{equation}\label{eq-I-1}
\I^*(\hat\varphi,\hat\mu)\geq \theta_N^{0,\hat h}(\hat\varphi,\hat\mu)=\theta_N^{0,\hat h}(\varphi^N,\mu^N)=\hat \I(\varphi^N,\mu^N).
\end{equation}
From \eqref{eq-I-1}, the fact $\varphi_s^N = \hat \varphi_s$, $\mu^N=\hat \mu$ until $\tau_N^{0,\hat h}(\hat\varphi,\hat\mu)$ and the fact $\hat\I$ is lower semi-continuous and inf-compact, we
obtain
$
\I^*(\hat\varphi,\hat\mu)\geq \hat \I(\hat \varphi,\hat\mu).
$
As a result, we can conclude that $\I^*(\hat\varphi,\hat\mu)=\hat \I(\hat \varphi,\hat\mu)$ for all $(\hat\varphi,\hat\mu)$ satisfying the requirements in Theorem \ref{thm-hat-m}.
\end{proof}

\para{Step (ii): Approximating the LD limits in arbitrary points by regular points.}
Let $\hat \I$ be a LD limit point of
$\{(X^\eps,\mu^\eps)\}_{\eps>0}$ and be such that $\hat\I(\varphi,\mu) = \infty$ unless $\varphi_0 = \hat x$.
In this step, it is proven (see \cite[Theorem 8.1]{Puh16}) that if $\I^*(\varphi,\mu) <\infty$, then there exists a sequence $(\varphi^{(k)},\mu^{(k)})$, whose elements have the properties as in
Theorem \ref{thm-hat-m}
such that $(\varphi^{(k)},\mu^{(k)})\to
(\varphi,\mu)$ and $\I^*(\varphi^{(k)},\mu^{(k)})\to \I^*(\varphi,\mu)$ as $k\to\infty$. Therefore, one has
$ \hat\I(\varphi,\mu)\leq \I^*(\varphi,\mu)=\lim_{k\to\infty}\I^*(\varphi^{(k)},\mu^{(k)})=\lim_{k\to\infty}\hat \I(\varphi^{(k)},\mu^{(k)})\geq\hat\I(\varphi,\mu).
$
Hence, we have obtained desired properties in this Section.

\subsubsection{Completion of
the Proof of Theorem {\rm\ref{thm-main}}} \label{sec:com}

	We will complete the proof of Theorem \ref{thm-main} by removing the restriction that $\hat \I(\varphi,
	\mu) =\infty$ unless $\varphi_0 =\hat x$ in Section \ref{sec:iden}, where $\hat x$ is a preselected element such that $\I_0(\hat x)=0$.
This
can be done similarly as in \cite[Section 9]{Puh16} which will be omitted here.

\section{Fast-Slow Second-Order Systems with General Fast Random Processes}\label{sec:main2}
In this section, we treat \eqref{eq:setup}, in which the fast-varying random process $\xi^\eps_t$
is under a more general setup without specified structure.
 We  need the following assumptions. By a glance, the conditions may seem to be abstract. Nevertheless,
Remark \ref{rem-1-23} illustrates that these assumptions are mild, verifiable, and natural.

\begin{asp}\label{asp-xi1}{\rm
		The functions $F^\eps_t(x,y)$ and $\lambda^\eps_t(x,y)$ are Lipschitz continuous in $x$ locally uniformly in $t$ and globally uniformly in $y$, and $\lambda^\eps_t(x,y)$ is bounded below (uniformly) by a positive constant $\kappa_0$.
		Either
		$F^\eps_t(x,y)$ and $\lambda^\eps_t(x,y)$ have linear growth in $(t,x)$ globally in $y$, i.e., there is a universal constant $\wdt C$ such that
		\begin{equation}\label{eq-asp-xi}
		|F^\eps_t(x,y)|+|\lambda^\eps_t(x,y)|\leq \wdt C(1+|t|+|x|),
		\end{equation}
		or
		$F^\eps_t(x,y)$ and $\lambda^\eps_t(x,y)$ have linear growth in $x$ locally in $y$, i.e., the constant $\wdt C$ in \eqref{eq-asp-xi} is uniformly in bounded sets of $y$
		and $\xi^{\eps}_{t}$ is such that for any $T>0$
		\begin{equation}\label{eq-asp-xi1}
		\lim_{L\to\infty}\limsup_{\eps\to0}\eps\log\PP\Big(\sup_{0\leq t\leq T}|\xi^\eps_{t}|>L\Big)=-\infty.
		\end{equation}
}\end{asp}

\begin{deff}\label{def-localLDP}
	{\rm
		A family of stochastic processes $\{X^\eps\}_{\eps>0}$ is said to  satisfy the local LDP in $\C([0,1],\R^d)$ with rate function $\JJ$, if for any $\varphi\in\C([0,1],\R^d)$,
		\bea
		\ad\lim_{\delta\to 0}\limsup_{\eps\to 0}\eps\log\PP\left(X^\eps\in B(\varphi,\delta)\right)\\
\ad		=\lim_{\delta\to 0}\liminf_{\eps\to 0}\eps\log\PP\left(X^\eps\in B(\varphi,\delta)\right) =-\JJ(\varphi),
		\eea
		where $B(\varphi,\delta)$ is the ball  centered at $\varphi$ with radius $\delta$ in $\C([0,1],\R^d)$.
		$\JJ$ is called local rate function.}
\end{deff}

\begin{asp}\label{asp-xi2}{\rm
		The family of processes $\{Z^\eps\}_{\eps>0}$ given by
		\begin{equation}\label{eq-Z} \barray
		\dot Z^\eps_t=\frac{F^\eps_t(Z^\eps_t,\xi^\eps_{t})}{\lambda^\eps_t(Z^\eps_t,\xi^\eps_{t})},\quad Z^\eps_0=x_0^\eps,\earray
		\end{equation}
		satisfies the local LDP with a rate function $\JJ$.
}\end{asp}

\begin{rem}\label{rem-1-23}
	{\rm Seemingly abstract,
		Assumption \ref{asp-xi2} is not restrictive. In fact, it is
		the LDP for the first-order systems, which is relatively well-understood now.
		For example, the condition is verified when $\xi_t^\eps$ is a (fast-varying) diffusion process
		$$d\xi^\eps_{t}=\frac{1}{\eps}b^\eps_t(X^\eps_t,\xi^\eps_{t})dt+\frac 1{\sqrt \eps}\sigma^\eps_t(X^\eps_t,\xi^\eps_{t})dW_t,$$
		where
			$W_t$ is a standard Brownian motion;
		see \cite{Lip96,Ver99,Ver00}.		 
It is also verified when $\xi_t^\eps$ is a (fast-varying) Markovian switching process
with generator $Q(t)/\eps$ and  $Q(t)$ being a time-inhomogeneous  and irreducible generator of a Markov chain, or $\xi^\eps_t$
is a (fast-varying) jump process having jumps at rate $O(\eps^{-1})$; see \cite{BDG18}.
		 Furthermore, the condition is verified  when $\xi_t^\eps$ has no specific representation but satisfies exponential ergodicity \cite{Gui03}.
		Note also that  condition \eqref{eq-asp-xi1} in Assumption \ref{asp-xi1} is essentially an
		exponential tightness of the fast processes, which is readily verified for
		$\xi^\eps_t$ being diffusion processes or Markovian switching processes.
		When we deal with general fast processes without any specific formulation,
		the assumption on tightness \eqref{eq-asp-xi1} and  Assumption \ref{asp-xi2}
		are very natural.
		Without the tightness and ergodicity of the fast processes, it is unlikely one can
		obtain the averaging and large deviations principles for a fast-slow system under the setting of  general fast processes.
	}
\end{rem}

\begin{rem}{\rm
		We did not assume any regularity and growth-rate conditions
		of the coefficients of the slow component when dealing with \eqref{eq:F-setup}. However, for general fast random process, it seems to be impossible to use the same assumptions
		because we do not require
		any structure for the fast process. As a result, the assumptions in this section are stronger  than that of Section \ref{sec:for}. In particular, we need the Lipschitz continuity and growth-rate conditions of $F^\eps_t(x,y)$.
	It is worth noting that we used two totally different approaches for the cases of fast diffusions and general fast-varying processes. If the fast process is a diffusion, thanks to the nice structure of
		martingales, we can identify the rate function after estimating the exponential moment. Therefore, in the first case, after obtaining the
		exponential
		tightness and then relatively LD compactness (see Definition \ref{def:LD}),
		our remaining work is to identify the rate functions. In the general case, we use a different approach that relies on the property that exponential tightness and the local LDP imply the full LDP. In this situation, we need to connect directly the solutions of the second-order and the first-order equations.
}\end{rem}

We are now in a position to present the main theorem. The result is stated next and proof is given in the next section.

\begin{thm}\label{thm-main2}
	Assume that Assumptions {\rm \ref{asp-xi1}} and {\rm\ref{asp-xi2}} hold, that the family $\{x_0^\eps\}_{\eps>0}$ is exponentially tight, and that $\limsup_{\eps\to0}\eps |x_1^\eps|<\infty\a.s$ 	Then, the family $\{X^\eps\}_{\eps>0}$ of \eqref{eq:setup} obeys the LDP in $\C(\R_+,\R^d)$ with rate function $\JJ$.
\end{thm}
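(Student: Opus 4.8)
The plan is to establish the LDP for $\{X^\eps\}_{\eps>0}$ by combining exponential tightness with the local LDP, using the standard fact that these two together imply the full LDP. The argument splits into three stages: (i) compare $X^\eps$ to the first-order process $Z^\eps$ of \eqref{eq-Z}; (ii) deduce exponential tightness of $\{X^\eps\}_{\eps>0}$ from the growth conditions in Assumption \ref{asp-xi1}; (iii) transfer the local LDP from $\{Z^\eps\}_{\eps>0}$ to $\{X^\eps\}_{\eps>0}$ using the Lipschitz continuity of $F^\eps/\lambda^\eps$ in $x$. The guiding intuition is Smoluchowski--Kramers: since $\eps^2\ddot X^\eps_t = F^\eps_t(X^\eps_t,\xi^\eps_{t/\eps}) - \lambda^\eps_t(X^\eps_t,\xi^\eps_{t/\eps})\dot X^\eps_t$ with $\lambda^\eps \geq \kappa_0 > 0$, the velocity $\dot X^\eps_t$ relaxes on the $\eps^2$-time-scale toward $F^\eps_t(X^\eps_t,\xi^\eps_{t/\eps})/\lambda^\eps_t(X^\eps_t,\xi^\eps_{t/\eps})$, so $X^\eps$ should track $Z^\eps$ exponentially well in the large-deviations scaling.

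First I would make the comparison quantitative. Writing $p^\eps_t = \dot X^\eps_t$, one has $\eps^2 \dot p^\eps_t = F^\eps_t(X^\eps_t,\xi^\eps_{t/\eps}) - \lambda^\eps_t(X^\eps_t,\xi^\eps_{t/\eps}) p^\eps_t$; solving this linear-in-$p^\eps$ ODE via an integrating factor built from $\exp(\eps^{-2}\int_0^t \lambda^\eps_s\,ds)$ and integrating by parts expresses $p^\eps_t$ as $F^\eps_t(X^\eps_t,\xi^\eps_{t/\eps})/\lambda^\eps_t(X^\eps_t,\xi^\eps_{t/\eps})$ plus a remainder that decays like $\eps^{\pm}$ times boundary and derivative terms, controlled on the event where $\sup_{t\leq T}|X^\eps_t| \leq L$ and $\sup_{t\leq T}|\xi^\eps_{t/\eps}| \leq L$ (using the regularity of $\lambda$ from \eqref{eq-cond-lambda1}, which is assumed for this second-order system as well, together with $\limsup_\eps \eps|x_1^\eps| < \infty$ to handle the initial velocity). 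Then $X^\eps_t - Z^\eps_t = \int_0^t\big(p^\eps_s - F^\eps_s(Z^\eps_s,\xi^\eps_{s/\eps})/\lambda^\eps_s(Z^\eps_s,\xi^\eps_{s/\eps})\big)ds$; splitting the integrand into the relaxation remainder plus $\big(F^\eps_s(X^\eps_s,\cdot)/\lambda^\eps_s(X^\eps_s,\cdot) - F^\eps_s(Z^\eps_s,\cdot)/\lambda^\eps_s(Z^\eps_s,\cdot)\big)$ and invoking the Lipschitz bound $|F^\eps_s/\lambda^\eps_s(x,y) - F^\eps_s/\lambda^\eps_s(x',y)| \leq C|x-x'|$ (which follows from the Lipschitz assumption on $F^\eps,\lambda^\eps$ and $\lambda^\eps \geq \kappa_0$), a Gronwall argument yields, on the good event, a deterministic bound $\sup_{t\leq T}|X^\eps_t - Z^\eps_t| \leq C(L,T)\,\eps$. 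This gives the \emph{superexponential closeness}
$$
\lim_{\eps\to 0}\eps\log\PP\Big(\sup_{0\leq t\leq T}|X^\eps_t - Z^\eps_t| > \delta\Big) = -\infty
\quad\text{for every } \delta > 0, T > 0,
$$
once one checks the good event itself has superexponentially small complement — for the complement of $\{\sup|\xi^\eps_{t/\eps}| \leq L\}$ this is \eqref{eq-asp-xi1} (in the locally-linear-growth case), and for $\{\sup|X^\eps_t| \leq L\}$ it follows from step (ii); in the globally-linear-growth case \eqref{eq-asp-xi} a direct Gronwall estimate on $|X^\eps_t|$ and $|Z^\eps_t|$ replaces the need for \eqref{eq-asp-xi1}.

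Second, for exponential tightness of $\{X^\eps\}_{\eps>0}$ in $\C(\R_+,\R^d)$ I would again use the linear-growth hypothesis: from $\dot X^\eps_t = p^\eps_t$ and the relaxation identity, $|\dot X^\eps_t| \lesssim |F^\eps_t/\lambda^\eps_t(X^\eps_t,\xi^\eps_{t/\eps})| + (\text{remainder}) \lesssim (1 + |t| + |X^\eps_t| + (\text{remainder}))$, so on the good event Gronwall bounds $\sup_{t\leq T}|X^\eps_t|$ by a constant, and a modulus-of-continuity estimate follows because $\dot X^\eps$ is bounded there; the superexponential smallness of the complement of the good event (via \eqref{eq-asp-xi1} or the direct estimate, and exponential tightness of $\{x_0^\eps\}$) then gives exponential tightness by the usual Arzelà--Ascoli-type criterion. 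Third, the local LDP transfers verbatim: since $B(\varphi,\delta/2) \subseteq \{X^\eps : \sup|X^\eps - Z^\eps| > \delta/2\} \cup B_{Z^\eps}(\varphi,\delta)$ and symmetrically, the superexponential closeness from step (i) forces $\lim_{\delta\to 0}\limsup_\eps \eps\log\PP(X^\eps\in B(\varphi,\delta)) = \lim_{\delta\to 0}\liminf_\eps \eps\log\PP(Z^\eps\in B(\varphi,\delta)) = -\JJ(\varphi)$, so $\{X^\eps\}$ satisfies the local LDP with the \emph{same} rate function $\JJ$ guaranteed by Assumption \ref{asp-xi2}. Finally, exponential tightness plus the local LDP with rate function $\JJ$ upgrades to the full LDP with rate function $\JJ$ (which is then automatically inf-compact), giving the theorem; the restriction to $\C([0,1],\R^d)$ in the local-LDP definition is patched to $\C(\R_+,\R^d)$ by the standard projective-limit argument over $[0,n]$.

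I expect the main obstacle to be step (i): making the Smoluchowski--Kramers relaxation estimate uniform and \emph{superexponentially} sharp. The delicate point is that the integrating-factor / integration-by-parts representation of $p^\eps_t$ produces terms involving $\nabla\lambda^\eps$, $\dot\xi^\eps$ and the large initial velocity $x_1^\eps/\eps$-type contributions; controlling these requires the regularity bounds \eqref{eq-cond-lambda1}–\eqref{eq-cond-lambda2} and the hypothesis $\limsup_\eps\eps|x_1^\eps| < \infty$, and one must be careful that no term degrades the $o(1)$-in-$\eps$ bound on the good event — in particular the remainder must be shown to vanish \emph{deterministically} there, not merely in probability, so that it contributes nothing to the $\eps\log\PP$ asymptotics. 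Everything else is a routine combination of Gronwall estimates and the abstract large-deviations machinery.
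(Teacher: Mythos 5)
Your skeleton (exponential tightness $+$ local LDP $\Rightarrow$ full LDP, with the local LDP inherited from the first-order family $\{Z^\eps\}$, and the two growth regimes of Assumption \ref{asp-xi1} handled as you describe) agrees with the paper, and your tightness step is essentially the paper's Gronwall argument from the variation-of-constants formula. The genuine gap is in your step (i). You propose a \emph{pathwise} Smoluchowski--Kramers relaxation of $p^\eps_t$ toward $F^\eps_t(X^\eps_t,\xi^\eps_{t/\eps})/\lambda^\eps_t(X^\eps_t,\xi^\eps_{t/\eps})$ via an integrating factor and integration by parts, with remainder terms ``involving $\nabla\lambda^\eps$, $\dot\xi^\eps$'' controlled by \eqref{eq-cond-lambda1}--\eqref{eq-cond-lambda2}. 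This cannot be executed under the hypotheses of Theorem \ref{thm-main2}: here $\xi^\eps_{t/\eps}$ is a completely general process, so $t\mapsto\lambda^\eps_t(X^\eps_t,\xi^\eps_{t/\eps})$ is merely measurable/continuous --- it is neither differentiable nor a semimartingale, $\dot\xi^\eps$ does not exist and no modulus of continuity (on the fast scale, where it would in any case degenerate as $\eps\to0$) is assumed. This is precisely why the paper states that Lemma \ref{lm-intbypart} cannot be used to connect the first- and second-order systems in this setting. Moreover, \eqref{eq-cond-lambda1}--\eqref{eq-cond-lambda2} belong to Assumptions \ref{asp-2}--\ref{asp-3} for the fast-diffusion case and are \emph{not} hypotheses of Theorem \ref{thm-main2}; Assumption \ref{asp-xi1} gives only Lipschitz continuity in $x$, the lower bound $\lambda^\eps\geq\kappa_0$, and linear growth. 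Even in the diffusion case the analogous remainder does not ``vanish deterministically'' on a bounded event: it contains $O(1/\eps)$ drift terms and stochastic integrals which the paper controls through exponential-martingale estimates, not pathwise bounds. Consequently your claimed superexponential closeness $\eps\log\PP(\sup_{t\le T}|X^\eps_t-Z^\eps_t|>\delta)\to-\infty$, which your step (iii) relies on, is unsupported.

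The paper avoids this entirely by working \emph{locally around each test path} $\varphi$ rather than comparing $X^\eps$ and $Z^\eps$ directly. It introduces the frozen-coefficient processes $X^{\eps,\varphi}$ of \eqref{eq-Xphi} and $Z^{\eps,\varphi}$ of \eqref{eq-Zphi} (coefficients evaluated at $\varphi_t$, not at the unknown), and makes three comparisons: $X^\eps$ vs.\ $X^{\eps,\varphi}$ and $Z^\eps$ vs.\ $Z^{\eps,\varphi}$ by Lipschitz-in-$x$/Gronwall estimates whose error is proportional to $\sup_s|X^\eps_s-\varphi_s|$ resp.\ $\sup_s|Z^\eps_s-\varphi_s|$ (see \eqref{eq-XXphi}, \eqref{eq-ZZPhi}), and $X^{\eps,\varphi}$ vs.\ $Z^{\eps,\varphi}$, where the Smoluchowski--Kramers estimate \eqref{eq-XZphi} is of size $C\eps^2(|x_1^\eps|+\sup_r|F^\eps_r(\varphi_r,\xi^\eps_{r/\eps})|)$ and needs no regularity of $\xi^\eps$ because the slow argument is frozen. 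Combining these yields the two-sided sandwich \eqref{eq-supXphi}--\eqref{eq-supZphi}, i.e.\ inclusions between the events $\{X^\eps\in B(\varphi,\delta)\}$ and $\{Z^\eps\in B(\varphi,c\,\delta)\}$ with a $\varphi$-dependent constant $c$; since the local rate function is obtained in the limit $\delta\to0$ after $\eps\to0$, such a radius distortion is harmless and the local LDP with the same $\JJ$ transfers to $\{X^\eps\}$, with \eqref{eq-asp-xi1} absorbing the event $\{\sup_t|\xi^\eps_{t/\eps}|>N\}$ in the locally-linear-growth case. If you want to salvage your write-up, replace your step (i) by this frozen-path comparison; the rest of your argument then goes through as in the paper.
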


\subsection{Proof of Theorem \ref{thm-main2}}\label{sec:prof2}
The proof of this theorem is based on the fact that the exponential tightness and local LDP implies the full LDP.
The following
result is well-known in large deviations theory; see, e.g.,
\cite{DS89,DZ98,LP92}.

\begin{prop}\label{prop-local-LDP}
	The exponential tightness and the local LDP for a family $\{X^\eps\}_{\eps>0}$ in $\C([0,1],\R^d)$ with local rate function $\JJ$ imply the full LDP in $\C([0, 1],\R^d)$ for this family with rate function $\JJ$.
\end{prop}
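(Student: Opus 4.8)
Write $\mathbb S=\C([0,1],\R^d)$, which is a Polish space. The plan is to combine two ingredients: exponential tightness, which by the Proposition quoted just above (i.e.\ \cite[Theorem 4.1]{Puh16}) makes $\{X^\eps\}_{\eps>0}$ sequentially LD relatively compact; and the local LDP, which pins down the rate function of \emph{every} subsequential LD limit to be exactly $\JJ$. A routine subsequence argument then promotes this to the full LDP for the whole family, with $\JJ$ inheriting inf-compactness from the subsequential limits. So the heart of the matter is the identification step.

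To carry it out I would fix an arbitrary subsequence $\{\eps_k\}$, use sequential LD relative compactness to extract a further subsequence $\{\eps_{k_j}\}$ along which $\{X^{\eps_{k_j}}\}$ obeys the full LDP with some inf-compact (hence lower semicontinuous) rate function $\I$, and show $\I=\JJ$. For $\I\ge\JJ$: fix $\varphi\in\mathbb S$ and $\delta>0$; the LDP lower bound applied to the open ball $B(\varphi,\delta)$ gives $\liminf_{j}\eps_{k_j}\log\PP(X^{\eps_{k_j}}\in B(\varphi,\delta))\ge-\inf_{\psi\in B(\varphi,\delta)}\I(\psi)\ge-\I(\varphi)$, and letting $\delta\to0$ the left-hand side tends to $-\JJ(\varphi)$ (this is forced, since $\liminf_{\eps\to0}\le\liminf_{j}\le\limsup_{j}\le\limsup_{\eps\to0}$ for each fixed $\delta$ and both the outer $\liminf$ and $\limsup$ over $\eps\to0$ converge to $-\JJ(\varphi)$ by the local LDP), whence $-\JJ(\varphi)\ge-\I(\varphi)$. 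For $\I\le\JJ$: the LDP upper bound applied to the closed ball $\overline{B(\varphi,\delta)}$ gives $\limsup_{j}\eps_{k_j}\log\PP(X^{\eps_{k_j}}\in B(\varphi,\delta))\le\limsup_{j}\eps_{k_j}\log\PP(X^{\eps_{k_j}}\in\overline{B(\varphi,\delta)})\le-\inf_{\psi\in\overline{B(\varphi,\delta)}}\I(\psi)$; as $\delta\to0$ the left-hand side tends to $-\JJ(\varphi)$ by the local LDP, while lower semicontinuity of $\I$ yields $\inf_{\psi\in\overline{B(\varphi,\delta)}}\I(\psi)\to\I(\varphi)$, so $-\JJ(\varphi)\le-\I(\varphi)$. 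Combining, $\I=\JJ$.

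Finally I would conclude as follows. Since every subsequence of $\{X^\eps\}_{\eps>0}$ has a further subsequence obeying the full LDP with the \emph{same} rate function $\JJ$, the whole family obeys the LDP with rate function $\JJ$: if, say, the upper bound failed for some closed $F\subseteq\mathbb S$, there would be $\alpha>0$ and $\eps_k\to0$ with $\eps_k\log\PP(X^{\eps_k}\in F)\ge-\inf_F\JJ+\alpha$ for all large $k$, and extracting from $\{\eps_k\}$ a sub-subsequence enjoying the full LDP (necessarily with rate $\JJ$, by the previous step) contradicts the LDP upper bound along that sub-subsequence; the lower bound for open sets is handled symmetrically. Inf-compactness of $\JJ$ is immediate since $\JJ=\I$ for any such subsequential limit $\I$, which is inf-compact. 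The only genuine subtlety I anticipate is the careful bookkeeping of $\liminf$/$\limsup$ under passage to subsequences — in particular checking that the local LDP limit survives restriction to a subsequence — together with the use of lower semicontinuity of $\I$ to replace $\inf_{\overline{B(\varphi,\delta)}}\I$ by $\I(\varphi)$ in the limit; neither point is difficult, which is why the statement is quoted as standard.
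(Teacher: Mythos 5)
Your proof is correct and is essentially the standard argument that the paper relies on by citation: the paper offers no proof of this proposition, simply calling it well-known and pointing to \cite{DS89,DZ98,LP92}, and your route (exponential tightness $\Rightarrow$ sequential LD relative compactness via \cite[Theorem 4.1]{Puh16}, identification of every subsequential rate function with $\JJ$ through the local LDP, then the subsequence/contradiction step for the upper and lower bounds) is exactly how the cited references establish it. The only cosmetic caveat is the case $\inf_F\JJ=\infty$ in your final contradiction argument, which should be phrased via $\limsup_{\eps\to0}\eps\log\PP(X^\eps\in F)>-\infty$ rather than ``$\ge-\inf_F\JJ+\alpha$'', but this is immediate to repair.
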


In what follows,
we  prove the LDP of $\{X^\eps\}_{\eps>0}$ in $\C([0,1],\R^d)$.
It will be seen that it can be extended to the space $\C([0,T],\R^d)$ endowed with the sup-norm topology for any $T>0$. As a consequence, the LDP still holds in $\C([0,\infty),\R^d)$, the space of continuous function on $[0,\infty)$ endowed with the local supremum topology.
(This fact follows from the Dawson-G\"artner theorem; see \cite[Theorem 4.6.1]{DZ98}, which states that it suffices  to check the LDPs in $\C([0, T],\R^d)$ for any $T$ in the uniform metric.)
We will still use
$C$ to represent a
generic positive
constant that is
independent of $\eps$. The value $C$ may change at different appearances;
we will specify which parameters it depends on if it is necessary.

\para{Exponential tightness.}
We
aim to prove \eqref{p-cre-11} and \eqref{p-cre-12}.
We have
\begin{equation}\label{eq-xi-X0}
X^\eps_t=x_0^\eps+\int_0^tx_1^\eps e^{-A^\xi_\eps(s)}ds+\dfrac 1{\eps^2}\int_0^t\int_0^se^{-A^{\xi}_\eps(s,r)}F^\eps_s(X^\eps_r,\xi^\eps_{r})dr,
\end{equation}
where for any $0\leq s\leq t\leq 1, \eps>0$,
$A^\xi_\eps(t,s):=\dfrac 1{\eps^2}\int_s^t\lambda^\eps_r(X_r^\eps,\xi^\eps_{r})dr$, $A^\xi_\eps(t)=A^\xi_\eps(t,0).$
So, we can obtain from some direct calculations and Assumption \ref{asp-xi1} that
\begin{equation}\label{eq-xi-X}
|X^\eps_t|\leq |x_0^\eps|+C\eps^2|x_1^\eps|+C\int_0^t \sup_{r\in[0,s]}|F^\eps_r(X^\eps_r,\xi^{\eps}_{r})|ds,
\end{equation}
and by noting further that $\int_s^t e^{-\frac{\kappa_0r}{\eps^2}}dr\leq C \eps^2(1-e^{-\frac{t-s}{\eps^2}})\leq C\eps\sqrt{|t-s|}$, we get
\begin{equation}\label{eq-xi-XS}
|X^\eps_t-X^\eps_s|\leq C\eps |x_1^\eps|\sqrt{|t-s|}+C|t-s|\sup_{r\in[s,t]}|F^\eps_r(X^\eps_r,\xi^{\eps}_{r})|.
\end{equation}
If \eqref{eq-asp-xi} in Assumption \ref{asp-xi1}  holds, \eqref{p-cre-11} follows immediately from \eqref{eq-xi-X} and Gronwall's inequality on noting that $\limsup_{\eps\to0}\eps|x^\eps_1|<\infty$ a.s., $\{x^\eps_0\}_{\eps>0}$ is exponentially tight; and then \eqref{p-cre-12} follows from \eqref{p-cre-11} and \eqref{eq-xi-XS}.

Otherwise, assume that \eqref{eq-asp-xi1} holds.
Let $\wdt C_N$ be constant in \eqref{eq-asp-xi} uniformly in $|y|<N$. We get from \eqref{eq-xi-X} that
$\sup_{t\in[0,1]}|X^\eps_t|\leq C(\wdt C_N+N)e^{\wdt C_N}$ provided $\sup_{t\in[0,1]}|\xi^\eps_{t}|<N$, $|x^\eps_0|<N$.
Therefore, for any $N>0$, for $L>C(\wdt C_N+N)e^{\wdt C_N}$ one has
$$
\PP^\eps(\sup_{t\in[0,1]}|X_t|>L)\leq \PP^\eps(|x^\eps_0|>N)+\PP^\eps(\sup_{t\in[0,1]}|\xi^\eps_{t}|>N).
$$
Letting $L\to\infty$ and $N\to\infty$ and using the logarithm equivalence principle \cite[Lemma 1.2.15]{DZ98}  and \eqref{eq-asp-xi1} in Assumption \ref{asp-xi1}, we get \eqref{p-cre-11}. Thus, we also obtain \eqref{p-cre-12}.

\para{Local LDP.}
It is noted that we do not
assume any structure of $\xi^\eps_{t}$. As a result, we could not use the integration by parts (Lemma \ref{lm-intbypart}) to connect the first-order and the second-order systems.
 Therefore, we will establish a relationship in a local sense.

  For each continuous function $\varphi$, we introduce the auxiliary processes $X^{\eps,\varphi}_t$, the solution of the following equation
\begin{equation}\label{eq-Xphi}
\eps^2\ddot X^{\eps,\varphi}_t=F^\eps_t(\varphi_t,\xi^\eps_{t})-\lambda^\eps_t(\varphi_t,\xi^\eps_{t})\dot X^{\eps,\varphi}_t,\quad X^\eps_0=x^\eps_0, \;\dot X^\eps_0=x^\eps_1,
\end{equation}
and $Z^{\eps,\varphi}_t$, the solution of
\begin{equation}\label{eq-Zphi}
\dot{Z}^{\eps,\varphi}_t=\frac{F^\eps_t(\varphi_t,\xi^\eps_{t})}{\lambda^\eps_t(\varphi_t,\xi^\eps_{t})},\quad Z^{\eps,\varphi}_0=x^\eps_0.
\end{equation}
We have from \eqref{eq-Xphi} and the variation of parameters formula that
\begin{equation}\label{eq-Xphi-1}
X^{\eps,\varphi}_t=x_0^\eps+\int_0^tx_1^\eps e^{-A^\xi_{\eps,\varphi}(s)}ds+\dfrac 1{\eps^2}\int_0^t\int_0^se^{-A^{\xi}_{\eps,\varphi}(s,r)}F^\eps_s(\varphi_r,\xi^\eps_{r})dr,
\end{equation}
where for any $0\leq s\leq t\leq 1, \eps>0$,
$A^\xi_{\eps,\varphi}(t,s):=\dfrac 1{\eps^2}\int_s^t\lambda^\eps_r(\varphi_r,\xi^\eps_{r})dr$, and $ A^\xi_{\eps,\varphi}(t)=A^\xi_{\eps,\varphi}(t,0).$
From the fact that $A^\xi_\eps(s,r),A^\xi_{\eps,\varphi}(s,r)\geq \frac{\kappa_0 (s-r)}{\eps^2}$,
and the property of $\lambda$,
we  obtain that
\begin{equation}\label{3.5}
\begin{aligned}
\abs{e^{-A^\xi_\eps(s,r)}-e^{-A^\xi_{\eps,\varphi}(s,r)}}
\leq&
C e^{\frac{-\kappa_0 (s-r)}{\eps^2}}\cdot\frac{1}{\eps^2}\int_r^s\abs{X^\eps_r-\varphi_r}dr\\
\leq& Ce^{\frac{-\kappa_0 (s-r)}{\eps^2}}\cdot\frac{s-r}{\eps^2}\cdot\sup_{r\in[0,s]}|X^\eps_r-\varphi_r|.
\end{aligned}
\end{equation}
A change of variable  leads to
\begin{equation}\label{eq-B2-2}
\int_0^s \exp\left\{\frac{-\kappa_0 (s-r)}{\eps^2}\right\}\cdot\frac{s-r}{\eps^2}ds=\eps^2\int_0^{\frac s{\eps^2}}e^{-\kappa_0 r}rdr
\leq C\eps^2.
\end{equation}
Therefore,
we obtain from the Lipschitz property of the coefficients and \eqref{3.5} that
\begin{equation}\label{3.6}
\begin{aligned}
\ad
\int_0^s \abs{e^{-A^\xi_\eps(s,r)}F^\eps_r(X^\eps_r,\xi^\eps_{r})-e^{-A^\xi_{\eps,\varphi}(s,r)}F^\eps_r(\varphi_r,\xi^\eps_{r})}dr
\\
\aad \ \leq \int_0^s e^{-A^\xi_\eps(s,r)}\abs{F^\eps_r(X^\eps_r,\xi^\eps_{r})-F^\eps_r(\varphi_r,\xi^\eps_{r})}dr\\
\aad \quad +\int_0^s \abs{F^\eps_r(\varphi_r,\xi^\eps_{r})}\abs{e^{-A^\xi_\eps(s,r)}-
	e^{-A^\xi_{\eps,\varphi}(s,r)}}dr\\
\aad \ \leq C\eps^2\sup_{0\leq r\leq s}\abs{X^\eps_r-\varphi_r}+C\eps^2\sup_{r\in[0,s]}|F^\eps_r(\varphi_r,\xi^\eps_{r})|\sup_{0\leq r\leq s}\abs{X^\eps_r-\varphi_r}.
\end{aligned}
\end{equation}
Combining \eqref{eq-xi-X0}, \eqref{eq-Xphi-1}, and applying \eqref{3.6} and noting that $\limsup_{\eps\to0}\eps|x^\eps_1|<\infty$ leads to
\begin{equation}\label{eq-XXphi}
\sup_{s\in[0,t]}|X^\eps_s-X^{\eps,\varphi}_s|\leq C\eps\sup_{s\in[0,t]}|X^\eps_s-\varphi_s|+C\sup_{r\in[0,t]}|F^\eps_r(\varphi_r,\xi^\eps_{r})|\int_0^t\sup_{r\in[0,s]}|X^\eps_r-\varphi_r|ds.
\end{equation}
From \eqref{eq-Xphi} and \eqref{eq-Zphi}, we obtain that
\begin{equation}\label{eq-XZphi}
\barray
|X^{\eps,\varphi}_t-Z^{\eps,\varphi}_t|\ad=\Big|\int_0^t\frac{\eps^2\ddot{X}^{\eps,\varphi}_r}{\lambda^\eps_r(\varphi_r,\xi^\eps_{r})}dr\Big|
\leq C\eps^2\sup_{s\in[0,t]}|\dot X^{\eps,\varphi}_t|\\
\ad\leq C\eps^2(|x^\eps_1|+\sup_{r\in[0,t]}|F^\eps_r(\varphi_r,\xi^\eps_{r})|).
\earray
\end{equation}
One also gets from \eqref{eq-Z}, \eqref{eq-Zphi}, and the Lipschitz continuity of $F^\eps,\lambda^\eps$ that
\begin{equation}\label{eq-ZZPhi}
\sup_{s\in[0,t]}|Z^{\eps,\varphi}_s-Z^\eps_s|\leq C\Big(\sup_{r\in[0,t]}|F^\eps_r(\varphi_r,\xi^\eps_{r})|+\sup_{r\in[0,t]}|
\lambda^\eps_r(\varphi_r,\xi^\eps_{r})|\Big)\int_0^t\sup_{r\in[0,s]}|Z^\eps_r-\varphi_r|ds.
\end{equation}
Now,
if \eqref{eq-asp-xi} in Assumption \ref{asp-xi1}  holds, combining \eqref{eq-XXphi}, \eqref{eq-XZphi}, and \eqref{eq-ZZPhi}, we get
\bea
\sup_{s\in[0,t]}|X^\eps_s-\varphi_s|\ad\leq C\eps\sup_{s\in[0,t]}|X^\eps_s-\varphi_s|+C\eps(1+\sup_{r\in[0,1]}|\varphi_r|)\\
\aad+C(1+\sup_{r\in[0,1]}|\varphi_r|)\sup_{r\in[0,1]}|Z^\eps_r-\varphi_r|\\ \aad +C(1+\sup_{r\in[0,1]}|\varphi_r|)\int_0^t\sup_{r\in[0,s]}|X^\eps_r-\varphi_r|ds,
\eea
and
\bea
\sup_{s\in[0,t]}|Z^\eps_s-\varphi_s|\ad \leq C\eps(1+\sup_{r\in[0,1]}|\varphi_r|)+C(1+\sup_{r\in[0,1]}|\varphi_r|)\sup_{r\in[0,1]}|Z^\eps_r-\varphi_r|\\
\ad \ +C(1+\sup_{r\in[0,1]}|\varphi_r|)\int_0^t\sup_{r\in[0,s]}|X^\eps_r-\varphi_r|ds.
\eea
Thus, for small $\eps$, one has
\begin{equation}\label{eq-supXphi}
\barray \ad
\sup_{t\in[0,1]}|X^\eps_t-\varphi_t|\leq C_{1,\varphi}\eps+C_\varphi\sup_{t\in[0,1]}|Z^\eps_t-\varphi_t|,
 \ \hbox{ and } \\
\ad
\sup_{t\in[0,1]}|Z^\eps_t-\varphi_t|\leq C_{2,\varphi}\eps+C_\varphi\sup_{t\in[0,1]}|X^\eps_t-\varphi_t|,
\earray
\end{equation}
for some constants $C_{1,\varphi}$, $C_{2,\varphi}$ depending only on $\sup_{r\in[0,1]}|\varphi_r|$ and independent of $\eps$.
So, for any $\delta>0$ we have from \eqref{eq-supXphi}
that
\bea \ad
\PP^\eps(\sup_{t\in[0,1]}|X^\eps_t-\varphi_t|<\delta)\leq \PP^\eps(\sup_{t\in[0,1]}|Z^\eps_t-\varphi_t|<2\delta C_{2,\varphi}),\;\forall \eps<\delta/C_{2,\varphi}, \ \hbox{ and } \\
\ad
\PP^\eps(\sup_{t\in[0,1]}|X^\eps_t-\varphi_t|<\delta)\geq \PP^\eps\Big(\sup_{t\in[0,1]}|Z^\eps_t-\varphi_t|<\frac{\delta} {2C_{1,\varphi}}\Big),\;\forall \eps<\delta/C_{1,\varphi}.
\eea
Therefore, the local LDP of $\{X^\eps\}_{\eps>0}$ follows directly from the local LDP of $\{Z^\eps\}_{\eps>0}$.

If \eqref{eq-asp-xi} in Assumption \ref{asp-xi1} only holds locally and \eqref{eq-asp-xi1} holds, then in the event $\sup_{t\in[0,1]}|\xi^\eps_{t}|<N$, we still have \eqref{eq-supXphi}
with $C_{1,\varphi}$, $C_{2,\varphi}$ replaced by $C_{1,\varphi,N}$, $C_{2,\varphi,N}$ depending only on $\varphi,N$.
So, for any $\delta>0$ one also has that $\forall \eps<\delta/C_{1,\varphi,N}\wedge \delta/C_{2,\varphi,N}$,
\bea
\ad\PP^\eps(\sup_{t\in[0,1]}|X^\eps_t-\varphi_t|<\delta)\\\ad\leq \PP^\eps(\sup_{t\in[0,1]}|Z^\eps_t-\varphi_t|<2\delta C_{2,\varphi,N})+\PP^\eps(\sup_{t\in[0,1]}|\xi^\eps_{t}|>N),
\eea
and
\bea
\ad
\PP^\eps(\sup_{t\in[0,1]}|X^\eps_t-\varphi_t|<\delta)\\\ad\geq \PP^\eps\Big(\sup_{t\in[0,1]}|Z^\eps_t-\varphi_t|<\frac{\delta} {2C_{1,\varphi,N}}\Big)-\PP^\eps(\sup_{t\in[0,1]}|\xi^\eps_{t}|>N).
\eea
By letting $\eps\to0$, $N\to\infty$, and $\delta\to0$ and using the logarithm equivalence principle \cite[Lemma 1.2.15]{DZ98} and \eqref{eq-asp-xi1} in Assumption \ref{asp-xi1}, we obtain the local LDP for $\{X^\eps\}_{\eps>0}$.
Therefore, the proof of Theorem \ref{thm-main2} is complete.

\section{Examples}\label{sec:example}
In this section, we consider some examples drawn from physics to illustrate our formulation and results in these  cases.

\subsection{Stochastic Acceleration with Small-mass Particles}
Stochastic acceleration considers motions of a net of particles in a net of random force fields, which is described by the Newton's law as
$
\ddot x_\eps(t)=\wdt F_\eps(t,\omega,x_\eps(t),\dot x_\eps(t),\chi_\eps(t)),
$
where $\wdt F_\eps$ denotes the random force fields.
Such
models were
considered by Kesten and Papanicolaou in \cite{KP79,KP80} and references therein.
Note that small and large are relative terms.
Here we focus on
small-mass particles, by
which we mean that
the Reynolds number is small (see e.g., \cite{Pur77} for a definition)
so that inertial effects  are negligible
compared to the damping force, or the ratio `inertial effects/damping force' is parameterized by $\eps\ll 1$.
Therefore, random force field $\wdt F_\eps$ can be written as
$\wdt F_\eps=F_\eps-\frac{\lambda_\eps}\eps$, and the motion is described by
$
\ddot x_\eps(t)=F_\eps(t,x_\eps(t),\chi_\eps(t))-
\frac{\lambda_\eps(t,x_\eps(t),\chi_\eps(t))}
{\eps}\dot x_\eps(t).
$
Now, by scaling $X^\eps_t:=x_\eps(t/\eps)$,
and $\xi_{t}^\eps:=\chi_\eps(t/\eps)$,
the system can be rewritten as \eqref{eq:setup}, i.e.,
\begin{equation}\label{eq:setup-0}
	\eps^2\ddot X^\eps_t=F^\eps_t(X^\eps_t,\xi^\eps_{t})-\lambda^\eps_t(X^\eps_t,\xi^\eps_{t})\dot X^\eps_t,\quad X^\eps_0=x_0\in\R^d,\quad\dot{X}^\eps_0=x_1\in \R^d.
\end{equation}
The above illustrates how the fast-varying process (or fast-varying random environment) $\xi^\eps_t=\chi_\eps(t/\eps)$ comes in. To further demonstrate,
 we consider some common models of the fast-varying processes
 for random environment $\xi_t^\eps$ and illustrate our results.

\subsubsection{Fast-Varying Diffusion} Consider the case fast-varying process $\xi_t^\eps$ is modeled as a (fast) diffusion, which is common in modeling stochastic process in physical phenomena, i.e.,
\begin{equation}\label{eq-xi}
\dot{\xi}^\eps_t=\dfrac 1\eps  b^\eps_t(X^\eps_t,{\xi}^\eps_t)+\dfrac 1{\sqrt\eps}\sigma^\eps_t(X^\eps_t,{\xi}^\eps_t)\dot W_t,\quad \xi^\eps_0=\xi_0\in\R^l.
\end{equation}
In this situation, stochastic acceleration
\eqref{eq:setup-0}-\eqref{eq-xi} become coupled second-order SDEs \eqref{eq:F-setup}.
The LDP for stochastic acceleration in this case is established in Theorem \ref{thm-main} with rate function given in variational form \eqref{eq-rate} or
representation \eqref{rate-alternative}.
It is important to note that we establish LDP for stochastic acceleration
assuming neither Lipschitz continuity nor linear growth-rate of $F^\eps$; see Assumptions \ref{asp-2}, \ref{asp-3}, and Remarks \ref{rem-11}.

\begin{thm}
	Under Assumptions \ref{asp-2}, \ref{asp-3}, and \eqref{eq-asp-conv}, stochastic acceleration under fast-varying diffusion environment \eqref{eq:setup-0}-\eqref{eq-xi} obeys LDP with the rate function given in \eqref{eq-rate}.
\end{thm}

\subsubsection{Fast-Varying Jumps}\label{sec:ex-jump}
Consider the case $\xi_t$ is a jump process taking finite values in $\M=\{1,\dots,|\M|\}$,
where $|\M|$ denotes the cardinality of the set $\M$.
Similar to \cite{BDG18}, the evolution of the jump fast component is constructed through a jump intensity function $c(x,y)=c_y(x) : \R^d \times\M\to [0, \infty)$ and a transition probability function $r(x,y,y')=r_{yy'}(x) : \R^d\times\M \times \M \to [0, 1]$, both of which are coupled with $X^\eps$.
To be self-contained, we describe the construction of jump processes $\xi_t^\eps$ as follows.

Assume that for all $(x,y)\in\R^d\times\M$, $\sum_{y'\in\M} r_{yy'}(x)=1, r_{yy}(x) = 0.$
Let $\zeta=\sup_{(x,y)\in\R^d\times\M}c_y(x)+1$,
$E_{yy'}(x)=[0,c_y(x)r_{yy'}(x)]$ for all $(x,y,y')\in\R^d\times\M\times\M$, $y\neq y'$, and
$\mathbb T =: \{(y,y') \in\M\times\M : r_{yy'}(x) > 0$  for some $x \in\R^d\}$.
For
$(i,j)\in\mathbb T$, let $\bar N_{ij}$ be a Poisson random measure on $[0; \zeta] \times [0,T] \times \R_+$ with intensity measure $\mu_{\zeta}\otimes \mu_T \otimes \mu_{\infty}$, where $\mu_T$ and $\mu_\infty$ denote the Lebesgue measures on $[0, T]$
and $\R_+$, respectively, such that
for $t\in[0,T]$,
$
\bar N_{ij}(A \times [0, t] \times B) - t\mu_\zeta(A)\mu_\infty(B)
$
is a $\F_t$-martingale for all $A\in \mathcal B[0,\zeta]$ and $B\in \mathcal B(\R_+)$ with $\mu_\infty(B) < 1$.
Then, we define
$
N^{\eps^{-1}}
_{ij} (dr\times dt) = \bar N_{ij}(dr \times dt \times [0,\eps^{-1}])
$, which
is a Poisson random measure on $[0, \zeta]\times [0, T]$ with intensity
$\eps^{-1}\mu_\zeta\otimes\mu_T$.
The processes $(N^{\eps^{-1}}_{ij})_{(i,j)\in\mathbb T}$ are taken to be mutually independent. We will assume that for $0 \leq s \leq t \leq T$,
$
\{
N^{\eps^{-1}}_{ij}(A \times (s; t] \times B): A \in \mathcal B[0,\zeta], B \in\mathcal B(\R_+), (i,j)\in
\mathbb T\}
$
is independent of $\F_s$.
Now, we consider the following stochastic acceleration with fast-varying jumps
\begin{equation}\label{qe-exp-2}
	\begin{cases}
		\eps^2\ddot{X}^\eps_t=F^\eps(X^\eps_t,Y^\eps_t)-\lambda^\eps(X^\eps_t,Y^\eps_t)\dot{X}^\eps_t,\\
			dY^\eps_t=\sum_{(i,j)\in\mathbb T}\int_{r\in[0,\zeta]}(j-i)\1_{\{Y^\eps(t-)=i\}}\1_{E_{ij}(X^\eps_t)}(r)N_{ij}^{\eps^{-1}}(dr\times dt),\\
		X^\eps_0=x_0\in\R^d,\quad\dot{X}^\eps_0=x_1\in \R^d,\quad Y_0^\eps=y_0\in\M.
	\end{cases}
\end{equation}
According to \cite{BDG18}, we make following assumption for the jump process.
\begin{asp}\label{asp-exp-2}
	The function $c$ is a bounded and
	there exists a finite constant $C>0$ such that for all $y,y'\in \M$ and $x_1, x_2\in \R^d$,
	$$
	|c_{y}(x_1) - c_y(x_2)|+|r_{yy'}(x_1) - r_{yy'}(x_2)|
	\leq C|x_1 - x_2|.
	$$
	Moreover,
	$$	 \inf_{x\in\R^d}\min_{y,z\in\M}\sum_{n=1}^{|\M|}r^n_{yz}(x)>0,\quad \inf_{x\in\R^d}\min_{y\in\M}c_y(x)>0,\quad
	\inf_{x\in\R^d}\min_{(y,y')\in\mathbb T}r_{yy'}(x)>0.
	$$
\end{asp}

The rate function for the LDP of \eqref{qe-exp-2} is constructed as follows.
For $\psi=(\psi(j))_{j\in\M}$, with $\psi_j:[0, \zeta] \to \R_+$ being a measurable map for every $j$, define
$$
\Phi^{\psi}_{ij}(x) =
\begin{cases}
	\int_{E_{ij}(x)}\psi_j(z)\mu_\zeta d(z),\text{ if }i\neq j,\\
	-\sum_{y:y\neq j}\Phi^{\psi}_{jy}(x),\text{ if } i=j,
\end{cases}
$$
and
$
\mathcal R =\{v=(v_{ij})_{(i,j)\in\mathbb T}, v_{ij}: [0,1]\times[0,\zeta]\to\R_+\text{ is measurable for all } (i,j)\in\mathbb T\}.
$
For $\varphi\in C([0,1],\R^d)$, let $\mathcal V(\varphi)$ be the collection of all
$$
\big(u=(u_i),v=(v_{ih}),\pi=(\pi_i)\big)\in\mathbb M([0,1]:\R^d)^{|\M|}\times\mathcal R\times \mathbb M([0,1]:\mathcal P(\M)),
$$
where $\mathbb M([0, 1] : \mathcal P(\M))$,
$\mathbb M([0,1] : \R^d)$
denote the space of measurable maps from
$[0, 1]$ to $\mathcal P(\M)$ and from $[0,1]$ to $\R^d$, respectively, with $\mathcal P(\M)$ being the space of probability measures on $\M$ equipped with the topology of weak convergence], such that $\int_0^1\|u_i(s)\|^2\pi_i(s)ds<\infty$ for each $i\in\M$, and
$$
\varphi_t=x_0+\sum_{j\in\M}\int_0^t \frac{F(\varphi_s,j)}{\lambda(\varphi_s,j)}\pi_j(s)ds;\;
\sum_{j\in\M}\pi_j(s)\Phi^{v_{j\cdot}(s,\cdot)}_{ji}(\varphi_s)=0, a.e.\; s\in[0,1], \forall i\in\M.
$$
Combining Theorem \ref{thm-main2} and \cite{BDG18} yields the following result.

\begin{thm}\label{thm-ex-jump}
	Assume Assumptions {\rm\ref{asp-xi1}} and  {\rm\ref{asp-exp-2}} hold.
	Then the family of processes $\{X^\eps\}_{\eps>0}$
	in stochastic acceleration system with fast-varying jump \eqref{qe-exp-2}
	satisfies the LDP
	with the rate function $\I$ given by
	\begin{equation}\label{eq-exp2-rate}
		\begin{aligned}
		\I(\varphi)=\displaystyle\inf_{(u,v,\pi)\in\mathcal V(\varphi)}\Bigg\{\sum_{i\in\M}&\frac 12\int_0^1\|u_i(s)\|^2\pi_i(s)ds\\
		&+\sum_{(i,j)\in\mathbb T}\int_{[0,\zeta]\times[0,1]}\ell(v_{ij}(s,z))\pi_i(s)\mu_\zeta(dz)ds\Bigg\},
		\end{aligned}
	\end{equation}
	where $\ell(x) = x \ln x - x + 1$.
\end{thm}

\subsection{Li\'enard equation with relaxation oscillations}
The Li\'enard equations, named after physicist Alfred-Marie Li\'enard, have been extensively studied in the literature of ordinary differential
equations. During the development of radio and vacuum tubes, the Li\'enard equations were
 used to model oscillating circuits.
 These equations were also used in
 mechanical systems in physics and engineering.
In the exploration of radio and vacuum tube technologies,
much attention was devoted
to the study of Li\'enard equations
and such equations with relaxation oscillations.
A notable important equation is the following
\begin{equation}\label{eq-Li}
	\frac 1{\nu^2}\ddot{x}^\nu(t)=g(x^\nu(t))-\kappa \dot{x}^\nu(t),
\end{equation}
where $\nu\gg 1$ is a large number, $\kappa$ is a positive constant, and $g$ is a function.
Equation \eqref{eq-Li} has been studied in detail in \cite{Nar93}
with the motivation from the
 familiar van der Pol equation  \cite{Poe27}.
Its variations can also be found in
 \cite{XY15} and references therein.

Now, we consider the case that the environment is perturbed by random factors so that the function $g$ and coefficient $\kappa$ depend on a random process, which varies very fast.
Such a fast-slow setting is natural as multiscale systems arise in many problems in various fields. For example, many processes  (e.g., signals, cellular processes) are inherently multiscale in nature with reactions occurring at varying speeds.
As a result, we consider the following  Li\'enard equation with relaxation oscillations in a fast-varying random environment
\begin{equation}\label{eq-Li-1}
	\frac 1{\nu^2}\ddot{x}^\nu(t)=g(x^\nu(t),\xi^\nu(t))-\kappa(\xi^\nu(t)) \dot{x}^\nu(t),
\end{equation}
where $\xi^\nu(t)$ is a (fast-varying) random process, which interacts with $x^\nu(t)$.
In particular, the time-scale separation comes from applications; see for example, \cite{Nar93} and references therein.
Using our results, we can establish LDP for the family of solutions $\{x^\nu(\cdot)\}_{\nu\gg 1}$ of \eqref{eq-Li-1}.
(i) If $\xi^\nu(t)$ has the form of a (fast) diffusion, LDP of $\{x^\nu(\cdot)\}_{\nu\gg 1}$ is established by Theorem \ref{thm-main} without any assumption about Lipschitz continuity of $g$. (ii) If $\xi^\nu(t)$ is a (fast) jump process, LDP of $\{x^\nu(\cdot)\}_{\nu\gg 1}$ can be obtained as in Section \ref{sec:ex-jump} (Theorem \ref{thm-ex-jump}).
For brevity, we only state the results without the verbatim derivations.

\begin{thm}
	\begin{itemize}
		\item [\rm{(i)}] Assume $d{\xi}^\nu(t)=\nu  b^\nu_t(x^\nu(t),{\xi}^\nu(t))+\sqrt{\nu}\sigma^\nu_t(x^\nu_t,{\xi}^\nu(t))dW(t), \xi^\nu_0=\xi_0\in\R^l$.
			Under Assumptions \ref{asp-2}, \ref{asp-3}, and \eqref{eq-asp-conv}, $\{x^\nu(\cdot)\}_{\nu\gg 1}$ satisfies LDP with the rate function
		\begin{equation}\label{eq-rate-lie}
			\begin{aligned}
\aad
\I_X(\varphi)= \I_0(\varphi_0)+\int_0^\infty \sup_{\beta\in\R^d} \Big[
\beta^\top\dot\varphi_s -\sup_{m\in\mathcal P(\R^l)}
\Big(\beta^\top\int_{\R^l}\frac{g(\varphi_s,y)}{\kappa(y)}m(y)dy\\
				\aad \
+\sup_{h\in \C_0^1(\R^l)}\int_{\R^l}\!
\Big([\nabla h(y)]^\top \Big(\frac12\di_y\big(\Sigma_s(\varphi_s,y)m(y)\big)-b_s(\varphi_s,y)m(y)\Big)\\
				\aad \
				-\frac 12\|\nabla h(y)\|^2_{\Sigma_s(\varphi_s,y)}m(y)
\Big)dy
\Big)\Big]
ds,
			\end{aligned}
		\end{equation}
	if $\varphi$ is absolutely continuous;
		otherwise, $\I_X(\varphi) = \infty$.
		
		\item [\rm{(ii)}] Assume $d\xi^\nu(t)=\sum_{(i,j)\in\mathbb T}\int_{r\in[0,\zeta]}(j-i)\1_{\{\xi^\nu(t-)=i\}}\1_{E_{ij}(x^\nu_t)}(r)N_{ij}^{\nu}(dr\times dt),$ where $N^\nu_{ij}$ is
a		Poisson random measure
with (fast) intensity rate $O(\nu)$
		constructed precisely as in Section \ref{sec:ex-jump}.
	Under Assumptions {\rm\ref{asp-xi1}} and  {\rm\ref{asp-exp-2}},
	The	$\{x^\nu\}_{\nu\gg 1}$
		satisfies the LDP
		with the rate function $\I$ given by
		\begin{equation}\label{eq-exp2-rate-lie}
			\begin{aligned}
				\I(\varphi)=\displaystyle\inf_{(u,v,\pi)\in\mathcal V(\varphi)}
&\Big\{\sum_{i\in\M}
\frac 12\int_0^1\|u_i(s)\|^2\pi_i(s)ds\\
				&\ +\sum_{(i,j)\in\mathbb T}\int_{[0,\zeta]\times[0,1]}\ell(v_{ij}(s,z))\pi_i(s)\mu_\zeta(dz)ds\Big\},
			\end{aligned}
		\end{equation}
		where $\ell(x) = x \ln x - x + 1$, and $\mathcal V(\varphi)$ be the collection of all $\big(u=(u_i),v=(v_{ih}),\pi=(\pi_i)\big)$
		such that $\int_0^1\|u_i(s)\|^2\pi_i(s)ds<\infty$ for each $i\in\M$, and
		$
		\varphi_t=x_0+\sum_{j\in\M}\int_0^t \frac{g(\varphi_s,j)}{\kappa(j)}\pi_j(s)ds$,
			and $\sum_{j\in\M}\pi_j(s)\Phi^{v_{j\cdot}(s,\cdot)}_{ji}(\varphi_s)=0, a.e.\; s\in[0,1], \forall i\in\M.
		$
	\end{itemize}
\end{thm}


\begin{thebibliography}{99}

\bibitem{BEM22}
H. Bessaih,
Y. Efendiev,
 R. Maris,
 Stochastic homogenization of a convection-diffusion equation {\it SIAM J. Math. Anal.} {\bf 53} (2021), no. 3, 2718--2745.

\bibitem{BDG18} A.
Budhiraja,
P.
Dupuis,
A. Ganguly, Large deviations for small noise diffusions in a fast Markovian environment, {\it Electron. J. Probab.} {\bf 23} (2018), no. 112, 1--33.

\bibitem{CCKW} X. Chen, Z.-Q. Chen, T. Takashi, and J. Wang,
 Homogenization of symmetric stable-like processes in stationary ergodic media, {\it SIAM J. Math. Anal.} {\bf 53} (2021),  2957–3001.



\bibitem{CF05} Z. Chen, M.I. Freidlin, Smoluchowski--Kramers approximation and exit problems, {\it Stoch. Dyn.} {\bf 5} (2005), 569--585.

\bibitem{Cheng} L. Cheng, R. Li, and W. Liu, Moderate deviations for the Langevin equation with strong damping,  {\it J.
Statist. Phys.} {\bf 170} (2018),  845--861.


\bibitem{Freidlin} S. Cerrai and M. Freidlin,  Large deviations for the Langevin equation with strong damping, {\it J. Statist. Phys.} {\bf 161} (2015), 859--875.

\bibitem{CG20}
S. Cerrai
and
N.
Glatt-Holtz, On the convergence of stationary solutions in the Smoluchowski-Kramers approximation of infinite dimensional systems, {\it J. Funct. Anal.} {\bf 278} (2020), no. 8, 108421, 38 pp.

\bibitem{CX22}
S. Cerrai and
G.
Xi, A Smoluchowski-Kramers approximation for an infinite dimensional system with state-dependent damping, {\it Ann. Probab.} {\bf 50} (2022), no. 3, 874--904.



\bibitem{CC18}
 F. Cipriano,
 T. Costa,
 A large deviations principle for stochastic flows of viscous fluids. {\it J. Differential Eqs.} {\bf 264} (2018),
  5070--5108.



\bibitem{DZ98} 
A. Dembo, O. Zeitouni, {\it Large Deviations Techniques and Their Applications}, 2nd ed.
Jones and Bartlett, Boston, 1998.

\bibitem{DS89}
J.D. Deuschel and D.W. Stroock, {\it Large Deviations}, Academic Press, San Diego, 1989.

\bibitem{DE11}
P.
Dupuis,
R.
Ellis,
{\it A Weak Convergence Approach to the Theory of Large Deviations}, John Wiley \& Sons, 2011.

\bibitem{FeKu}
J. Feng, T. Kurtz, {\it Large Deviations for Stochastic Processes},
Amer. Math Soc., Prov.,
2006.


\bibitem{FGS12}
G. Fibich,
A. Gavious,
and
E. Solan, Averaging principle for second-order approximation of heterogeneous models with homogeneous models, {\it PNAS},
 {\bf 109} (48) (2012), 19545--19550. 

\bibitem{Fre04} M.I. Freidlin, Some remarks on the Smoluchowski-Kramers approximation, {\it J. Statist. Phys.} {\bf 117} (2004), 617--634.

\bibitem{Fre84} M.I. Freidlin and A.D. Wentzell, {\it Random Perturbations of Dynamical Systems}, Springer-Verlag, New York, 1984.



\bibitem{Gao} P. Gao,
Averaging principle for complex Ginzburg--Landau equation perturbated by mixing random forces, {\it SIAM J. Math. Anal.} {\bf 53} (2021), 32--61.



\bibitem{Gui03} A. Guillin, Averaging principle of SDE with small diffusion: Moderate deviations, {\it Ann. Probab.}
{\bf 31} (2003), 413--443.


\bibitem{HLL} W. Hong, S. Li, and W, Liu,
Freidlin--Wentzell type large deviation principle for multiscale locally monotone SPDEs,
{\it SIAM J. Math. Anal.} {\bf 53},  (2021),  6517--6561.


\bibitem{Yinhe} Q. He, G. Yin, Large deviations for multi-scale Markovian switching systems with a small diffusion. {\it Asymptot. Anal.} {\bf 87} (2014),
123--145.

\bibitem{IMS}
S.A. Isaacson, J. Ma, and K. Spiliopoulos,
Mean field limits of particle-based stochastic reaction-diffusion models,
{\it SIAM J. Math. Analy.}, {\bf 54}  (2022), 453--511.

\bibitem{KP79} H. Kesten, G.C. Papanicolaou, A limit theorem for turbulent diffusion, {\it Comm. Math. Phys.} {\bf 65} (1979), 97-128.

\bibitem{KP80} H. Kesten, G C. Papanicolaou, A limit theorem for stochastic acceleration, {\it Comm. Math. Phys.} {\bf 78} (1980), 19-63.


\bibitem{KhY}
R. Khasminskii and G. Yin,
On averaging principles: An asymptotic expansion approach,
 {\it SIAM J. Math. Anal.}, {\bf 35} (2004),
1534--1560.


\bibitem{Kif09}
Y. Kifer,
Large deviations and adiabatic transitions for dynamical systems and Markov processes in fully coupled averaging, {\it Mem. Amer. Math. Soc.} {\bf 201}  (2009).


\bibitem{Kus20} H.J. Kushner, P. Dupuis, {\it Large deviations estimates for systems with small (wideband) noise effects and applications to stochastic systems and conmunication theory}, Probability Theory and Applications,
659--662, De Gruyter, 2020.

\bibitem{LP92}
R.S. Liptser, A.A. Puhalskii, Limit theorems on large deviations for semimartingales, {\it Stochastics Stochastics Rep.} {\bf 38} (1992), 201--249.

\bibitem{Lip96} 
R.S. Liptser, Large deviations for two scaled diffusions. {\it Probab. Theory Related Fields},
{\bf 106} (1996), 71--104.

\bibitem{Mao97} X. Mao, {\it Stochastic Differential Equations and Their Applications}, Horwood,
    Chichester, 1997.


\bibitem{Nar93}
K.
Narita,
Asymptotic behavior of solutions of SDE for relaxation oscillations. {\it SIAM J. Math. Anal.} {\bf 24} (1993), 172--199.

\bibitem{NY-JMP} N. Nguyen, G. Yin, A class of Langevin equations with Markov switching involving strong damping and fast switching, {\it J. Math. Phys.} {\bf 61} (2020), 063301,
18 pp.

\bibitem{NY-JMP2} N. Nguyen, G. Yin, Large deviations principles for Langevin equations in random environment and applications, {\sl J. Math. Phys.}
{\bf 62} (2021), 083301, 26 pp.





\bibitem{PIX21}
 B. Pei,
  Y. Inahama,
  Y. Xu,
  Averaging principle for fast-slow system driven by mixed fractional Brownian rough path. {\it J. Differential Eqs.} {\bf 301} (2021), 202--235.



\bibitem{Puh16} A.A. Puhalskii, On large deviations of coupled diffusions with time scale separation, {\it Ann. Probab.} {\bf 44} (2016), 3111--3186.


\bibitem{Pur77} E.M. Purcell, Life at low Reynolds number, {\it Amer. J. Phys.} {\bf 45}, 3 (1977).


\bibitem{Tou09} H. Touchette, The large deviation approach to statistical mechanics, {\it Phys. Rep.} {\bf 478} (2009), 1--69.

    \bibitem{Poe27}
B.~van~der~Pol, \"Uber relaxations schwingungen, {\it Jahrb. Drahtl. Telegr. Teleph.}  {\bf 28} (1927), 178--184.

\bibitem{XY15}
F.
Xi,
G.
Yin,
Stochastic Li\'enard equations with state-dependent switching, {\it Acta Math. Appl. Sin. Engl. Ser.} {\bf 35} (2015),  893--908.

\bibitem{XY22}
L
Xie
and
L.
Yang, The Smoluchowski-Kramers limits of stochastic differential equations with irregular coefficients, {\it Stochastic Process. Appl.} {\bf 150} (2022), 91--115.

\bibitem{Xu22}
J.
Xu, An averaging principle for slow-fast fractional stochastic parabolic equations on unbounded domains, {\it Stochastic Process. Appl.} {\bf 150} (2022), 358--396.

\bibitem{XW23}
J.
Xu,
Q.
Lian
and
J.
Wu, A strong averaging principle rate for two-time-scale coupled forward-backward stochastic differential equations driven by fractional Brownian motion, {\it Appl. Math. Optim.} {\bf 88} (2023), no. 2, Paper No. 32, 35 pp.

 \bibitem{Ver99} A. Yu. Veretennikov,  On large deviations in the averaging principle for SDEs with a full
dependence, {\it Ann. Probab.} {\bf 27} (1999), 284--296. 

\bibitem{Ver00} A. Yu. Veretennikov, On large deviations for SDEs with small diffusion and averaging,
 {\it Stochastic Process. Appl.} {\bf 89} (2000), 69--79.

\bibitem{XZW21} F.
Xi, C. Zhu, and F. Wu, On strong Feller property, exponential ergodicity and large deviations principle for stochastic damping Hamiltonian systems with state-dependent switching,
{\it J. Differential Eqs.} {\bf 286}  (2021), 856--891.


\end{thebibliography}
\end{document}